\newtheorem{theorem}{Theorem}[section]
\newtheorem{lemma}[theorem]{Lemma}
\newtheorem{cor}[theorem]{Corollary}
\newtheorem{question}[theorem]{Question}
\newtheorem*{theorem*}{Theorem}
\theoremstyle{definition}
\newtheorem{remark}[theorem]{Remark}
\newtheorem{defi}[theorem]{Definition}
\newcommand{\R}{\mathbb{R}}
\newcommand{\EE}{\mathbb{E}}
\newcommand{\tr}{\textup{tr}}
\newcommand{\FW}{\mathcal{F}\mathcal{W}}
\renewcommand{\P}{\mathcal{P}}
\newcommand{\M}{\mathcal{M}}
\DeclareMathOperator*{\Diag}{Diag}
\DeclareMathOperator*{\diag}{diag}
\DeclareMathOperator*{\supp}{supp}
\newcommand{\st}{{\text{ s.t. }}}
\DeclareMathOperator{\conv}{\operatorname{conv}}
\newcommand*{\Sym}{\R^{n \times n}_{\mathrm{sym}}}
\DeclareMathOperator{\argmax}{argmax}
\title{Quadratic Programs with Sparsity Constraints Via Polynomial Roots}
\date{\today}
\author{Kevin Shu}
\address[Kevin Shu]{School of Mathematics, Georgia Institute of Technology, 686 Cherry Street, Atlanta, GA 30332, USA}
\email{kshu8@gatech.edu}
\thanks{We would like to thank Greg Blekherman, Santanu Dey, and Shengding Sun for many insightful conversations. Much of the computational experiments supporting this paper were conducted at the Max Planck Institute for Mathematics in the Sciences.}
\begin{document}

\begin{abstract}
    Quadratic constrainted quadratic programs (QCQPs) are an expressive family of optimization problems that occur naturally in many applications.
    It is often of interest to seek out \emph{sparse} solutions, where many of the entries of the solution are zero.
    This paper will consider QCQPs with a single linear constraint, together with a sparsity constraint that requires that the set of nonzero entries of a solution be small.
    This problem class includes many fundamental problems of interest, such as sparse versions of linear regression and principal component analysis, which are both known to be very hard to approximate.
    We introduce a family of tractible approximations of such sparse QCQPs using the roots of polynomials which can be expressed as linear combinations of principal minors of a matrix.
    These polynomials arose naturally from the study of hyperbolic polynomials.
    Our main contributions are formulations of these approximations and computational methods for finding good solutions to a sparse QCQP.
    We will also give numerical evidence that these methods can be effective on practical problems.
\end{abstract}

\maketitle

%%%%%%%%%%%%%%%%%%%%%%%%%%%%%%%%%%%%%%%%%%%%%%%%%%%%%%%%%%%%%%%%%%%%%%%%%%%%%%%%%%%%%%%%%%%%%%
%%%%%%%%%%%%%%%%%%%%%%%%%%%INTRODUCTION&%%%%%%%%%%%%%%%%%%%%%%%%%%%%%%%%%%%%%%%%%%%%%%%%%%%
%%%%%%%%%%%%%%%%%%%%%%%%%%%%%%%%%%%%%%%%%%%%%%%%%%%%%%%%%%%%%%%%%%%%%%%%%%%%%%%%%%%%%%%%%%%%%%
\section{Introduction}
\subsection{Problem Setup}
The main objects of interest in this paper are homogeneous \emph{sparse quadratically constrainted quadratic programs} (sparse QCQPs) with a single linear constraint. Let $k$ be a nonnegative integer. A sparse QCQP is an optimization problem of the form
\begin{equation}\label{eq:sparse_qcqp_orig}
    \begin{aligned}
        \max\quad & x^{\intercal}A_0x\\
        \st & x^{\intercal}A_1x = 1\\
            & x \in \R^n\\
            &|\supp(x)| \le k.
    \end{aligned}
    \tag{QCQP}
\end{equation}
Here, $\supp(x) = \{i \in [n] : x_i \neq 0\}$, and $A_1, A_0 \in \Sym$.
While our techniques can produce results for sparse QCQPs with more constraints, we will focus on the one constraint case where $A_1$ is is positive definite, as it contains the problems which are of greatest interest, and also our results are especailly effective in this setting.

Two problems of this form which we will consider in detail are the \emph{sparse linear regression} and \emph{sparse maximum eigenvalue} (sparse PCA from here on) problems.
These are the natural sparse versions of the corresponding classical linear algebra problems, for which formulations as sparse QCQPs can be found in \Cref{sec:sparseReg} and \Cref{sec:sparsePCA}.
These are both problems which have been studied extensively in data science as methods for extracting features from data, for example in \cite{d2008optimal,candes2005decoding}.
Both sparse linear regression and sparse PCA are known to be NP hard \cite{welch1982algorithmic, magdon2017np}, and as such, sparse QCQPs with a single constraint in general are NP-hard to solve.

There is an extensive literature on the various types of sparse QCQPs, mostly related to the sparse linear regression and sparse PCA problems.
For this reason, and because our goal is mostly to highlight a concrete application of our algbraic methods, we will not provide an exaustive list of previous work on these subjects.
We will make particular note of approaches which increase sparsity using a penalty function such as the $\ell_1$ norm to reward solutions for being more sparse.
Such methods constitute the state of the art for sparse optimization both in theory and in practice \cite{tibshirani1996regression, hastie2020best, zou2006sparse}.
Under certain conditions, these methods can be shown to find the global optimum for \Cref{eq:sparse_qcqp_orig}, but generally, these are heuristic methods which find good solutions to the original sparse regression problem.

There have also been approaches which certify that their solutions are optimal or close to optimal, such as \cite{bertsimas2016best, bertsimas2022solving}, which use techniques such as mixed integer optimization and branch and bound.
These methods do not have polynomial time runtime, though they can be surprisingly effective on small instances of these problems.

Our method perhaps most strongly resembles greedy iterative methods for solving sparse linear regression, such as orthogonal matching pursuit \cite{tropp2004greed}.
Firstly, we observe that if the support of a solution is fixed in advance, then \Cref{eq:sparse_qcqp_orig} reduces to a simple QCQP with a single constraint.
Such single constraint QCQPs are easily solved, so the difficult aspect is finding the combinatorial problem of finding the support of a good solution.
Our method builds up the support of a solution iteratively, i.e. we start with an empty set for the support, and then repeatedly add elements to the support in such a way as to maximize some heuristic score for that set.

Our methods are heuristic, and inspired by interior point method for solving semidefinite programs \cite{alizadeh1995interior, ben2001lectures}.
In the case of semidefinite programming, one key fact that is used for interior point methods is that the determinant function is zero on the boundary of the positive semidefinite cone.
This fact leads to a connection between the optimum value of a semidefinite program and the zero set of the determinant function.
Concretely, if we consider the one-constraint semidefinite program
\begin{equation*}
    \begin{aligned}
        \max\quad & \tr(A_0X)\\
        \st & \tr(A_1X) = 1\\
            & X \succeq 0,
    \end{aligned}
\end{equation*}
where $A_1$ is PSD, then the optimum value of this program is equal to the maximum zero of the univariate polynomial $g(t) = \det(A_1 t - A_0)$.
This can be seen by considering the dual semidefinite program and applying the fact that the determinant must be zero on the boundary of the PSD cone.
We will modify these facts in the sparse setting by introducing sparse versions of the determinant, and applying them to solving sparse versions of QCQPs.

\begin{defi}
We say that a polynomial $p$ in a symmetric matrix of indeterminants is a \emph{linear combination of principal minors (LPM)} if it is not identically zero and it is of the form
\[
    p(X) = \sum_{S \subseteq [n] : |S| = k} a_S\det(X|_S),
\]
for some coefficients $a_S$, and where $X|_S$ denotes the principal submatrix of $X$ indexed by the set $S$.
\end{defi}
LPM polynomials were studied in \cite{blekherman2021linear} for their connection to hyperbolic polynomials and convex optimization.
In order to apply our methods, we will need as input an oracle that can efficently compute an LPM polynomial $p$ where all of the $a_S$'s are strictly positive.

The key examples of such polynomials is are the \emph{characteristic coefficients}:
\[
    c_n^k(X) = \sum_{S \subseteq [n] : |S| = k} \det(X|_S).
\]
The characteristic coefficients are so named because they are the coefficients of the characteristic polynomial of the matrix $X$ (we will refer to \cite{horn2012matrix} for such linear algebra facts).
We will see that the characteristic coefficients have a number of properties which make them suitable for our methods, and also, it is possible to compute their values efficiently \cite{baer2021faddeev}.

Our main observation is the following: the roots of LPM polynomials can be used to bound the objective value of \Cref{eq:sparse_qcqp_orig}.
For a univariate polynomial $g(t)$, denote by $\eta_g$ the maximum real root of $g$, or $-\infty$ if $g$ has no real roots.
\begin{theorem}
    \label{thm:root_thm}
    Consider a sparse QCQP $\mathcal{Q}$ as in \Cref{eq:sparse_qcqp_orig} so that $A_1$ is positive definite.
    Suppose that $p$ is an LPM polynomial with nonnegative coefficients.
    Then there exists a feasible point $x$ to $\mathcal{Q}$ whose value is at least $\eta_g$, where $g(t) = p(A_1 t - A_0)$.

    That is, the maximum real root of the univariate polynomial $p(A_1 t - A_0)$ is a lower bound for the value of $\mathcal{Q}$.
\end{theorem}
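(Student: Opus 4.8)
The plan is to reduce the sparse QCQP to a finite family of ordinary single-constraint QCQPs, one for each candidate support $S$ of size $k$, solve each of those by the eigenvalue argument recalled in the introduction, and then exploit the nonnegativity of the coefficients $a_S$ to show that the largest root of $g$ cannot exceed the best of these values.

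First I would record the structure of $g$ and of the restricted problems. Fixing a set $S \subseteq [n]$ with $|S| = k$ and requiring $\supp(x) \subseteq S$ turns \Cref{eq:sparse_qcqp_orig} into the ordinary single-constraint QCQP $\max\{\, y^{\intercal} A_0|_S\, y : y^{\intercal} A_1|_S\, y = 1,\ y \in \R^{|S|} \,\}$. Since $A_1 \succ 0$, the principal submatrix $A_1|_S$ is positive definite, so the substitution $z = (A_1|_S)^{1/2} y$ identifies this problem with $\max\{\, z^{\intercal} M_S z : z^{\intercal} z = 1 \,\}$, where $M_S := (A_1|_S)^{-1/2} A_0|_S (A_1|_S)^{-1/2}$ is symmetric; hence its value is $\lambda_{\max}(M_S)$, attained at a pulled-back unit eigenvector. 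On the other hand $g_S(t) := \det\bigl((A_1 t - A_0)|_S\bigr) = \det(A_1|_S)\,\det(t I - M_S)$, so $g_S$ has degree exactly $k$, positive leading coefficient $\det(A_1|_S)$, and only real roots, the largest of which is $\lambda_{\max}(M_S) = \eta_{g_S}$. In particular $g_S(t) > 0$ for every $t > \eta_{g_S}$, and there is a genuine feasible point of $\mathcal{Q}$ supported on $S$ attaining the value $\eta_{g_S}$.

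Next, by the definition of an LPM polynomial, $g(t) = p(A_1 t - A_0) = \sum_{|S| = k} a_S\, g_S(t)$. Put $\tau := \max\{\, \eta_{g_S} : |S| = k,\ a_S > 0 \,\}$, a maximum over a nonempty finite set since $p \not\equiv 0$, with each $\eta_{g_S}$ a finite real number by the previous step. For any real $t > \tau$ and any $S$ with $a_S > 0$ we have $g_S(t) > 0$, so $g(t) = \sum_{S : a_S > 0} a_S\, g_S(t) > 0$ because the $a_S$ are nonnegative; hence $g$ has no real root exceeding $\tau$, i.e.\ $\eta_g \le \tau$. Choosing $S^\ast$ with $a_{S^\ast} > 0$ and $\eta_{g_{S^\ast}} = \tau$, the optimizer of the restricted QCQP on support $S^\ast$ is feasible for $\mathcal{Q}$ and has value $\eta_{g_{S^\ast}} = \tau \ge \eta_g$, which is the claim.

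I expect the only step requiring real care to be the identification of the value of the restricted single-constraint QCQP with $\eta_{g_S}$, together with attainment of the optimum: this is precisely where positive definiteness of $A_1|_S$ is used, both to make the change of variables $z = (A_1|_S)^{1/2} y$ legitimate and to guarantee that all roots of $g_S$ are real, so that $\eta_{g_S}$ is finite and the maximum $\tau$ is achieved. The remaining steps are elementary bookkeeping with the signs of leading coefficients and the nonnegativity of the $a_S$.
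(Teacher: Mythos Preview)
Your proof is correct, but it follows a different route from the paper's. The paper argues via the convex dual \Cref{eq:sparse_sdp_dual}: if $p(A_1 y_0 - A_0)=0$ then some principal minor $\det((A_1 y_0 - A_0)|_S)\le 0$, so $A_1 y - A_0\notin\mathcal{P}(\Delta)$ for all $y\le y_0$, and strong duality (guaranteed by $A_1\succ 0$) transfers this lower bound to the primal. You instead decompose $g=\sum_S a_S g_S$ directly, identify each $\eta_{g_S}$ with the value of the restricted QCQP on support $S$ via the generalized eigenvalue change of variables, and use positivity of each $g_S$ past its largest root together with $a_S\ge 0$ to conclude $\eta_g\le\max_{a_S>0}\eta_{g_S}$.

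Your argument is more elementary and self-contained: it needs neither the SDP reformulation nor strong duality, and it also makes the attaining $S^\ast$ explicit. The paper's duality route, on the other hand, is what sets up the later generalizations in \Cref{sec:hyperbolic} to hyperbolicity cones, multiple constraints, and non-PSD $A_1$; there the per-$S$ eigenvalue decomposition is no longer available, while the containment $\mathcal{P}(\Delta)\subseteq\Lambda_I(p)$ still gives a tractable relaxation. So your proof buys simplicity for the one-constraint positive-definite case, and the paper's buys the framework for the extensions.
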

We will discuss an alternative formulation of this theorem in terms of hyperbolic polynomials in \Cref{sec:hyperbolic}, which will allow us obtain results that relax the condition of having only 1 constraint, and also enable us to use non-positive semidefinite values for $A_1$.

We will describe how we intend to make use of this observation here.
\subsection{Contributions}
Our main contribution is to make \Cref{thm:root_thm} effective.
We provide a schematic of an algorithm, \Cref{alg:greedy}, which produces solutions whose values exceed the lower bound provided in \Cref{thm:root_thm} in polynomial time.
Actually, the algorithm we propose typically finds much better solutions than what is guaranteed by the theorem.

The main idea of \Cref{alg:greedy} is to produce from a single LPM polynomial, a sequence of LPM polynomials with a decreasing number of nonzero coefficients, until in the output there is only a single nonzero coefficient.
If the final polynomial we obtain is $p = a_S \det(X|_S)$, then we output the set $S$ for the support of our final solution to \Cref{eq:sparse_qcqp_orig}.
If we can compute each of these LPM polynomials efficiently, and also, the associated root increases in each step, then we can apply \Cref{thm:root_thm} directly to see that our final solution will have objective value at least that guaranteed by \Cref{thm:root_thm}.

While \Cref{alg:greedy} can in principle applied for any LPM polynomial, there are a number of details that need to be considered to make it practical.
In \Cref{sec:char_poly}, we will give a specific implementation of \Cref{alg:greedy} for when the underlying polynomial is the characteristic coefficient $c_n^k$, which runs in roughly $O(n^3)$ time for sparse regression and sparse PCA tasks.
In \Cref{sec:experimental}, we conduct numerical experiments showing that our methods acheive results that are competitive with standard methods for sparse optimization in both approximation quality and in speed.

We also show that these roots lead to natural sparse versions of classical linear algebra theorems.
For example, in classical linear algebra, we can use Cramer's rule to show that for any matrix $A \in \R^{m\times n}$ and any $b \in \R^m$, the least squares regression loss when regressing $b$ against the columns of $A$ is exactly
\[
    \|b\|_2^2 - \frac{\det(A^{\intercal} ( I + bb^{\intercal})A)}{\det(A^{\intercal}A)} + 1.
\]
Similarly, for our sparse method, the upper bound on the sparse least squares regression loss guaranteed by \Cref{thm:root_thm} is given precisely by
\[
    \|b\|_2^2 - \frac{p(A^{\intercal} ( I + bb^{\intercal})A)}{p(A^{\intercal}A)} + 1.
\]
Thus, in this case, we may think of this as a generalized Cramer's rule.

We also show that this quantity appears naturally when we consider the expectation of the least squares regression loss when regressing using a \emph{random} sample of columns of $A$ drawn from the truncated determinantal point process defined by $A^{\intercal}A$ in \Cref{sec:probabilistic}.

In the case of sparse PCA, we have the classical fact that the maximum eigenvalue of a matrix $A$ is given by
\[
    \max \{t : \det(tI - A) = 0\}.
\]
Our methods guarantee that for any LPM polynomial $p$ with nonnegative coefficients,
\[
    \max \{t : p(tI - A) = 0\}
\]
is a lower bound for the maximum $k$-sparse eigenvalue of $A$.

We also show that in principal, there is always some LPM polynomial for which \Cref{thm:root_thm} is exact in \Cref{thm:continuous_formulation}.
This result does lead to an efficient algorithm because it would require knowing a-priori what the support of an optimal solution is.

We conclude by describing how these methods can be generalized to multiple constraints in \Cref{sec:hyperbolic}, where we also describe the connections between these ideas and hyperbolic polynomials.

\subsection{Paper Layout}
The structure of this paper is as follows: the first few sections of the paper are meant as an extended summary of the practical results of our paper.
We first describe a convex formulation of \Cref{eq:sparse_qcqp_orig} and how it naturally leads to a proof of \Cref{thm:root_thm} in \Cref{sec:lpm_relaxation}.
We then define the high level idea of our algorithm in \Cref{subsec:heuristic}, and then give slightly more detailed results for the sparse regression and sparse PCA problems in \Cref{sec:sparseReg} and \Cref{sec:sparsePCA}.
For example, we will discuss our probabilistic formulation of our methods for sparse regression, and how they are related to determinantal point processes, which are popular in machine learning.
We round out the practical discussion of our methods in \Cref{sec:experimental} with numerical results showing that our results are both accurate and fast, in comparison to standard methods for these problems.

After discussing the practical merits of our methods, we give a more detailed description of our efficient algorithm as it applies to the characteristic coefficient in \Cref{sec:char_poly}, which will require some algorithmic ideas for quickly computing the values of characteristic coefficients at a large number of matrices. 
We then give proofs of the main results mentioned earlier in the paper.
We conclude by describing how these results relate to the theory of hyperbolic polynomials and then some open questions.

%%%%%%%%%%%%%%%%%%%%%%%%%%%%%%%%%%%%%%%%%%%%%%%%%%%%%%%%%%%%%%%%%%%%%%%%%%%%%%%%%%%%%%%%%%%%%%
%%%%%%%%%%%%%%%%%%%%%%%%%%%LPM RELAXATIONS&%%%%%%%%%%%%%%%%%%%%%%%%%%%%%%%%%%%%%%%%%%%%%%%%%%%
%%%%%%%%%%%%%%%%%%%%%%%%%%%%%%%%%%%%%%%%%%%%%%%%%%%%%%%%%%%%%%%%%%%%%%%%%%%%%%%%%%%%%%%%%%%%%%
\section{LPM Relaxations of Sparse QCQPs}
 \label{sec:lpm_relaxation}

We will in fact consider a slightly more general problem than \Cref{eq:sparse_qcqp_orig} that allows for combinatorial constraints on the support of a feasible solution.
Let $\Delta \subseteq 2^{[n]}$ be a family of subsets of $[n]$ where all of the elements of $\Delta$ have size $k$, then a generalized sparse QCQP is an optimization problem of the form
\begin{equation}\label{eq:sparse_qcqp}
    \begin{aligned}
        \max\quad & x^{\intercal}A_0x\\
        \st & x^{\intercal}A_1x = 1\\
            & x \in \R^n\\
            &\supp(x) \in \Delta.
    \end{aligned}
    \tag{QCQP$_{\Delta}$}
\end{equation}
Here, $\supp(x) = \{i \in [n] : x_i \neq 0\}$, and $A_1, A_0 \in \Sym$. The original formulation of a sparse QCQP in \Cref{eq:sparse_qcqp} is the case when $\Delta = \binom{[n]}{k} = \{S \subseteq [n] : |S| = k\}$.

We say that an LPM polynomial $p = \sum a_S \det(X|_S)$ has support $\Delta$ if $\{S : a_S \neq 0\} \subseteq \Delta$.

It will also be useful to think of this as a combinatorial optimization problem like so:
\begin{equation}\label{eq:sparse_qcqp_2}
    \max_{S \in \Delta}
    \begin{aligned}
        \max\quad & x^{\intercal}A_0x\\
        \st & x^{\intercal}A_1x = 1\\
            & x \in \R^S\\
    \end{aligned}
    \tag{QCQP$_{\Delta}$}
\end{equation}
Here, $R^S = \{x \in \R^n : \supp(x) \subseteq S\}$.
The inner optimization problem can easily be solved using semidefinite programming methods for example, so the interesting question is in fact the outer optimization problem over $S$.

We now describe how the result of \Cref{thm:root_thm} arises naturally from the study of convex relaxations of \Cref{eq:sparse_qcqp}.

Various convex formulations of \Cref{eq:sparse_qcqp} have been considered, for example in \cite{atamturk2019rank, bach2010convex}.
For our purposes, we will consider the cone
\[
    \M(\Delta) = \conv \{xx^{\intercal} : x \in \R^n, \supp(x) \in \Delta\}.
\]
When $\Delta = \binom{[n]}{k}$, $\M(\Delta)$ is denoted $\FW^k_n$, and is refered to as the \emph{factor-width $k$} cone.
This cone has been studied extensively because of its connections to sparse quadratic programming \cite{boman2005factor, gouveia2022sums}.

In the one constraint case, it is not hard to see that \Cref{eq:sparse_qcqp} is equivalent to the following convex problem:
\begin{equation}
    \begin{aligned}
        \max\quad & \tr(A_0X)\\
        \st & \tr(A_1X) = 1\\
            & X \in \M(\Delta).
    \end{aligned}
    \label{eq:sparse_sdp}
\end{equation}
We think of $X$ as representing the matrix $x^{\intercal}x$, and because there is only one constraint, the optimum of \Cref{eq:sparse_sdp} (if it is feasible) must be of the form $x^{\intercal}x$ for some $x \in \R^n$ where $\supp(x) \in \Delta$.

The next step in defining this heuristic is to take the dual to \Cref{eq:sparse_sdp}
\begin{equation}\label{eq:sparse_sdp_dual}
    \begin{aligned}
        \min\quad & y\\
        \st & A_1y - A_0 \in \P(\Delta).
    \end{aligned}
\end{equation}
Here,
\[
    \P(\Delta) = \{X \in \Sym : \forall S \in \Delta,\;X|_S \succeq 0\}
\] is the conical dual to $\M(\Delta)$.
When $\Delta = \binom{[n]}{k}$, this cone and its connections to hyperbolicity cones were studied extensively in \cite{blekherman2022hyperbolic}, where it was denoted by $\mathcal{S}^{n,k}$.

We will assume that strong duality holds for this problem, which in the 1-constraint setting is equivalent to saying that $-A_1$ is not in $\mathcal{P}(\Delta)$.

The main observation is that if $X \in \P(\Delta)$, and $p$ is an LPM polynomial whose coefficients are nonnegative, then $p(X) \ge 0$, simply because determinants of PSD matrices are nonnegative.
Using this observation, we may think of $\P(\Delta)$ as being a kind of barrier function for $\P(\Delta)$, in that if $p(X) < 0$, then $X$ is not in $\P(\Delta)$.
From this, we can prove \Cref{thm:root_thm} theorem easily:
\begin{proof}[Proof of \Cref{thm:root_thm}]
    Because $A_1$ is positive definite, \Cref{eq:sparse_sdp} satisfies strong duality. Therefore, we consider the dual program \Cref{eq:sparse_sdp_dual}.

    Because $A_1$ is positive definite, for $y$ large enough, $A_1 y - A_0$ will be positive definite, and in particular will be in $\mathcal{P}(\Delta)$.
    By convexity, if there is some $y_0$ so that $A_1 y_0 - A_0$ is not in $\mathcal{P}(\Delta)$ then for all $y < y_0$, $A_1 y - A_0$ is not in $\mathcal{P}(\Delta)$.

    Suppose now that $p(A_1y_0 - A_0) = 0$. Then for some $S \in \Delta$, $\det((A_1y_0 - A_0)|_S) \le 0$.
    In particular, for any $y < y_0$, $(A_1 y - A_0)|_S$ cannot be positive semidefinite.
    Therefore, we have that the value of the dual program is at least $y_0$, and by strong duality, the value of the primal program must also be at least $y_0$.
\end{proof}

This theorem actually can be extended somewhat to give a continuous optimization problem that exactly recovers the value of \Cref{eq:sparse_qcqp} (though this problem will turn out to be intractible generally).
\begin{theorem}
    \label{thm:continuous_formulation}
    Consider a sparse QCQP $\mathcal{Q}$ as in \Cref{eq:sparse_qcqp} where $A_1$ is positive definite.
    Suppose that $p$ is an LPM polynomial supported on $\Delta$ with positive coefficients. For any diagonal matrix $D$, define the polynomial
    \[
        p_D(X) = p(DXD).
    \]
    Then $p_D$ is an LPM polynomial with positive coefficients supported on $\Delta$, and the value of $\mathcal{Q}$ is precisely
    \[
        \max_{D \in \text{Diag}} \eta_{g_D},
    \]
    where $g_D = p_D(A_1t - A_0)$.
\end{theorem}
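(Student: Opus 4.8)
The plan is to prove the two inequalities $\max_D \eta_{g_D}\le \operatorname{val}(\mathcal Q)$ and $\max_D \eta_{g_D}\ge \operatorname{val}(\mathcal Q)$ separately: the first is a direct application of \Cref{thm:root_thm}, and the second is witnessed by the single diagonal matrix that projects onto the support of an optimal solution. Before that, the claim that $p_D$ is again an LPM polynomial on $\Delta$ follows from $\det((DXD)|_S) = \big(\prod_{i\in S}d_i^2\big)\det(X|_S)$, which gives $p_D(X)=\sum_S\big(a_S\prod_{i\in S}d_i^2\big)\det(X|_S)$; the coefficients $a_S\prod_{i\in S}d_i^2$ are nonnegative, and strictly positive precisely for those $S$ contained in $\supp(D)$, so for invertible $D$ they are all positive and in general $p_D$ is an LPM polynomial on $\Delta$ whenever it is not identically zero. (The hypothesis ``positive coefficients'' should be read with this convention, since the optimizing $D$ below is singular.)

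For the upper bound, fix any diagonal $D$ with $p_D\not\equiv 0$. Since $p_D$ is an LPM polynomial with nonnegative coefficients and $A_1\succ 0$, \Cref{thm:root_thm} applied to $\mathcal Q$ with the polynomial $p_D$ produces a feasible point of $\mathcal Q$ of value at least $\eta_{g_D}$; hence $\eta_{g_D}\le \operatorname{val}(\mathcal Q)$ for every such $D$. For the matching lower bound I would pass to the combinatorial reformulation \eqref{eq:sparse_qcqp_2}: there is a set $S^*\in\Delta$ with $\operatorname{val}(\mathcal Q)=\max\{x^\intercal A_0x : x^\intercal A_1x=1,\ x\in\R^{S^*}\}$, and because $A_1|_{S^*}\succ 0$, the classical one-constraint determinantal formula recalled in the introduction (applied on the coordinate subspace $\R^{S^*}$) identifies this value with $\eta_h$ for $h(t)=\det((A_1t-A_0)|_{S^*})$. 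Now take $D^*$ to be the $0/1$ diagonal matrix with $(D^*)_{ii}=1$ iff $i\in S^*$. Every $S\in\Delta$ with $S\ne S^*$ has $S\setminus S^*\ne\emptyset$ (all members of $\Delta$ have size $k$), so its term in $p_{D^*}$ carries a factor $d_i=0$ and vanishes; thus $p_{D^*}(X)=a_{S^*}\det(X|_{S^*})$ and $g_{D^*}(t)=a_{S^*}h(t)$, whence $\eta_{g_{D^*}}=\eta_h=\operatorname{val}(\mathcal Q)$. Combined with the upper bound, the maximum over diagonal $D$ is attained at $D^*$ and equals $\operatorname{val}(\mathcal Q)$.

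I do not expect a genuinely hard step here — the content is essentially \Cref{thm:root_thm} plus the classical single-constraint formula — but two points want care. The main one is the degenerate diagonal just used: one should either adopt the convention above, or, if one insists on $D$ invertible (so that ``positive coefficients'' holds literally), replace $D^*$ by $D^{(\varepsilon)}$ with entries $1$ on $S^*$ and $\varepsilon>0$ off $S^*$, observe that $g_{D^{(\varepsilon)}}\to a_{S^*}h$ coefficientwise with leading coefficient bounded away from $0$, and run an intermediate-value argument comparing the signs of $g_{D^{(\varepsilon)}}$ slightly above and slightly below $\operatorname{val}(\mathcal Q)$ to conclude $\eta_{g_{D^{(\varepsilon)}}}\to\operatorname{val}(\mathcal Q)$; the annoyance there is that $\operatorname{val}(\mathcal Q)$ need not be a simple root of $h$, so the sign comparison must be done at $\operatorname{val}(\mathcal Q)\pm\delta$ with $\delta\to 0$ jointly with $\varepsilon$, and this bookkeeping is the only fiddly part of the argument. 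The second, entirely routine, point is to record that $\mathcal Q$ is feasible and bounded (immediate from $A_1\succ 0$ and $\Delta\ne\emptyset$) so that $\operatorname{val}(\mathcal Q)$ is well defined, and that $h$ is a degree-$k$ real-rooted polynomial with positive leading coefficient so that $\eta_h$ is indeed its largest generalized eigenvalue.
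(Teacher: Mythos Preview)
Your proposal is correct and follows essentially the same route as the paper: both use \Cref{thm:root_thm} for the upper bound and the $0/1$ diagonal indicator of an optimal support $S^*$ for the lower bound, reducing to the single minor $a_{S^*}\det((A_1t-A_0)|_{S^*})$ whose largest root is the optimal value by the one-constraint SDP duality. You are in fact more careful than the paper on the ``positive versus nonnegative coefficients'' point for singular $D$; the paper simply writes ``nonnegative'' in its proof and does not address the discrepancy with the theorem's wording, so your convention or your $\varepsilon$-perturbation remark is a welcome clarification rather than a deviation.
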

We will defer the proof of this result to \Cref{sec:continuous}.

\Cref{thm:root_thm} is the starting point for a number of results.
In \Cref{subsec:heuristic}, we will give a method for efficiently finding a feasible $x$ whose value in \Cref{eq:sparse_qcqp} matches the one guaranteed by \Cref{thm:root_thm}.
In \Cref{sec:hyperbolic}, we will also describe a way to generalize theorem \ref{thm:root_thm} to cases with multiple constraints, and when $A_1$ is not positive definite.

%%%%%%%%%%%%%%%%%%%%%%%%%%%%%%%%%%%%%%%%%%%%%%%%%%%%%%%%%%%%%%%%%%%%%%%%%%%%%%%%%%%%%%%%%%%%%%
%%%%%%%%%%%%%%%%%%%%%%%%%%%GREEDY CONDITIONING%%%%%%%%%%%%%%%%%%%%%%%%%%%%%%%%%%%%%%%%%
%%%%%%%%%%%%%%%%%%%%%%%%%%%%%%%%%%%%%%%%%%%%%%%%%%%%%%%%%%%%%%%%%%%%%%%%%%%%%%%%%%%%%%%%%%%%%%
\section{The Greedy Conditioning Heuristic}
\label{subsec:heuristic}
In this section, we give an efficient meta-algorithm for finding a feasible $x$ to \Cref{eq:sparse_qcqp} with value at least that guaranteed by \Cref{thm:root_thm}.
We will do this using a greedy algorithm and an idea which we call the \emph{conditioning trick}.
In fact, the $x$ we will produce will often be significantly better than what is guaranteed by a naive application of \Cref{thm:root_thm}.

We will assume here that $A_1$ is positive definite for this section.

Let $p = \sum_{S \in \Delta} a_S \det(X|_S)$ be an LPM polynomial.
Given a set $T \subseteq [n]$, we define the conditional polynomial $p|_T$ to be
\[
    p|_T = \sum_{T\subseteq S \in \Delta} a_S \det(X|_S).
\]
That is, rather than summing over all $S \in \Delta$, we only sum over those $S$ in $\Delta$ that also contain $T$.

We will also extend our earlier definition of $\eta_g$ to define $\eta_{p, A_1, A_0}$ to be the maximal real root of the polynomial $p(A_1 y - A_0)$, or $-\infty$ if there is no such real root.
When $A_1$ and $A_0$ are clear from context, or implicitly defined by a sparse QCQP, we will abuse notation and simply refer to  $\eta_p$ instead of $\eta_{p, A_1, A_0}$.

Using these definitions, we can state our greedy heuristic:
we first fix an LPM polynomial with nonnegative coefficients supported on $\Delta$.
\begin{algorithm}
    \caption{The Greedy Conditioning Heuristic}
    \label{alg:greedy}
    \begin{algorithmic}
        \State $T \gets \varnothing$
        \For{$t = 1 \dots k$}
            \State $j \gets \argmax \eta_{p|_{T + j}}$
            \State $T \gets T + j$
        \EndFor

        \Return T
    \end{algorithmic}
\end{algorithm}
Intuitively, $\eta_{p|_T}$ gives us a score for how well the sets in $\Delta$ containing $T$ perform in \Cref{eq:sparse_qcqp_2}.
In each round, we add an element to our current set that maximizes the marginal improvement in this score.

\begin{theorem}
    \label{thm:greedy_works}
    Fix some LPM polynomial $p$ of degree $k$ supported on $\Delta$.
    Suppose that we have an oracle that can evaluate $p$ at any symmetric matrix $X$ in exact arithmetic.
    We can then compute $\eta_{p|_{T}}$ for any $T \subseteq [n]$ using polynomially many arithmetic operations and evaluations of $p$.
    Furthermore, \Cref{alg:greedy} produces a set $T$ so that
    \begin{equation}
        \begin{aligned}
            \max\quad & x^{\intercal}A_0x\\
            \st & x^{\intercal}A_1x = 1\\
                & x \in \R^T\\
        \end{aligned}
        \ge \eta_p.
    \end{equation}
\end{theorem}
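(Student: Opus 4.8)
My plan is to break the statement into two parts: (1) that $\eta_{p|_T}$ can be computed in polynomial time given the oracle for $p$, and (2) that the greedy algorithm produces a set $T$ whose associated single-constraint QCQP value is at least $\eta_p$. For part (1), the key observation is that $p|_T$ is itself an LPM polynomial, and in fact $p|_T(X) = \sum_{T \subseteq S \in \Delta} a_S \det(X|_S)$ can be extracted from $p$ by a restriction/substitution trick on the matrix variable. Specifically, I would express the coefficient extraction as follows: for a set $T$, consider substituting into $p$ a block-structured matrix where the rows and columns indexed by $[n]\setminus T$ are scaled so that we can read off, via a finite difference or polynomial interpolation in an auxiliary scalar, exactly the terms whose index set contains $T$. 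Concretely, one evaluates $p(X + \lambda E_T)$ (or $p$ on a matrix with an extra scalar parameter on the $T$-block) at enough values of $\lambda$ and interpolates; the top-degree-in-$\lambda$ part isolates $p|_T$. Once we have an oracle for $p|_T$ as an LPM polynomial of degree $k - |T|$ on the variables indexed by $[n]\setminus T$, the univariate polynomial $g_{T}(t) = p|_T(A_1 t - A_0)$ has degree at most $k$ in $t$, so we can recover its coefficients by evaluating at $k+1$ points and interpolating, and then $\eta_{p|_T}$ is its largest real root, computable in exact arithmetic (as an algebraic number) using standard root-isolation.

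For part (2), the heart of the argument is to track the quantity $\eta_{p|_T}$ as $T$ grows and relate its final value to the QCQP value over $\R^T$. I would first establish the monotonicity claim implicit in the algorithm: at each step, choosing $j = \argmax_j \eta_{p|_{T+j}}$ gives $\eta_{p|_{T+j}} \ge \eta_{p|_T}$. This should follow from the identity $p|_T = \sum_{j \notin T} \tfrac{1}{|S|-|T|}\,(\text{something})$ — more precisely, summing $p|_{T+j}$ over all valid $j$ recovers a positive multiple of $p|_T$ (each $S \supseteq T$ is counted once for each $j \in S \setminus T$), so $p|_T = \tfrac{1}{k - |T|}\sum_{j \notin T} p|_{T+j}$ as LPM polynomials. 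Since all these are LPM polynomials with nonnegative coefficients and $A_1$ is positive definite, I can invoke the same convex-duality reasoning as in the proof of Theorem 1.2: $\eta_{p|_T}$ equals the optimal value of the relevant dual/primal pair over $\M(\Delta_T)$ where $\Delta_T = \{S \in \Delta : S \supseteq T\}$, and a nonnegative combination of barrier polynomials has maximal root at least the minimum of the individual maximal roots — hence $\eta_{p|_T} \le \max_j \eta_{p|_{T+j}}$. This gives the monotone increase. At termination, $T$ has size $k$, and $p|_T = a_T \det(X|_T)$ is a single term (assuming $a_T \neq 0$; one must check the greedy never gets stuck on a zero coefficient, which follows because the sum identity forces some $j$ with $a$-support nonempty as long as $p|_T \neq 0$). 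Then $\eta_{p|_T} = \eta_{\det(\cdot|_T)}$, and by the classical one-constraint SDP fact quoted in the introduction (maximum root of $\det(A_1 t - A_0)$ restricted to the $T$-block equals the value of the QCQP over $\R^T$), we get exactly the QCQP value over $\R^T$. Combining with the monotonicity chain $\eta_p = \eta_{p|_{\varnothing}} \le \eta_{p|_{T_1}} \le \dots \le \eta_{p|_{T_k}} = (\text{QCQP value over } \R^T)$ completes the proof.

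The step I expect to be the main obstacle is making the monotonicity inequality $\eta_{p|_T} \le \max_j \eta_{p|_{T+j}}$ fully rigorous. The intuitive claim — that the largest real root of a nonnegative combination $\sum_j q_j$ of LPM barrier polynomials is at least the smallest of the largest roots $\eta_{q_j}$ — is not true for arbitrary polynomials, so one genuinely needs the barrier/sign structure: for $t < \min_j \eta_{q_j}$, one argues (as in the proof of Theorem 1.2) that $A_1 t - A_0 \notin \P(\Delta_{T+j})$ for every $j$, and wants to conclude something about $\sum_j q_j$ at such $t$. The cleanest route is probably to avoid reasoning about the polynomial values directly and instead argue entirely through the SDP value characterization: $\eta_{p|_T}$ is sandwiched between the values of $\max\{\tr(A_0 X) : \tr(A_1 X) = 1, X \in \M(\Delta_T)\}$ for the relevant supports, and since $\M(\Delta_T) = \conv \bigcup_j \M(\Delta_{T+j})$, the maximum over the union bound gives $\eta_{p|_T} \le \max_j \eta_{p|_{T+j}}$ while Theorem 1.2 applied to $p|_T$ gives $\eta_{p|_T} \le$ the value over $\M(\Delta_T)$ in the first place. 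I would need to be careful that the root $\eta_{p|_T}$ really does equal (not merely lower-bound) the $\M(\Delta_T)$-value only in the final single-term step, and is merely a lower bound earlier — but that asymmetry is exactly what the telescoping argument needs, so it works out. A secondary technical point is handling the case where some $p|_{T+j}$ has no real root ($\eta = -\infty$); this is harmless since $A_1 \succ 0$ guarantees $\eta_{p|_\varnothing} > -\infty$ and the maximum only increases.
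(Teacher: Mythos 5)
Your overall structure matches the paper's: establish an oracle for $p|_T$, prove the monotonicity inequality $\eta_{p|_T} \le \max_j \eta_{p|_{T+j}}$ via the identity $(k-|T|)\,p|_T = \sum_{j\notin T} p|_{T+j}$, and observe that at termination $\eta_{p|_T}$ is exactly the one-block QCQP value. Your oracle construction in part (1) is a legitimate variant of the paper's: the paper differentiates $p(X + t\,1_T)$ at $t=0$ to get $p_{-T}(X)$ and then reassembles $p|_T(X) = \det(X|_T)\,p_{-T}(X\setminus T)$ via the Schur-complement factorization (\Cref{lem:schurCompCond}), whereas your scaling trick --- conjugating $X$ by a diagonal $D_\lambda$ that multiplies the $T$-rows by $\lambda$ and reading off the $\lambda^{2|T|}$ coefficient --- directly isolates $p|_T(X)$ with no Schur complement needed. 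That is arguably cleaner, though your write-up conflates this with the additive substitution $p(X+\lambda 1_T)$ (which isolates $p_{-T}$, not $p|_T$) and with ``top-degree'' versus ``lowest-degree''; be careful to pick one and keep it straight.

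The genuine gap is in your monotonicity argument. The statement you invoke, ``a nonnegative combination of barrier polynomials has maximal root at least the minimum of the individual maximal roots,'' is the wrong inequality (and, as you note, false for general polynomials); and the conclusion you draw, $\eta_{p|_T} \le \max_j \eta_{p|_{T+j}}$, does not follow from it in any case. Your proposed repair via the SDP characterization also does not close the gap: $\eta_{p|_T}$ is only a \emph{lower bound} on the value of the program over $\M(\Delta_T)$ (this is exactly \Cref{thm:root_thm}), not equal to it, so knowing $V(T) = \max_j V(T+j)$ together with $\eta_{p|_T} \le V(T)$ and $\eta_{p|_{T+j}} \le V(T+j)$ gives no relation between $\eta_{p|_T}$ and $\max_j \eta_{p|_{T+j}}$. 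You flag this asymmetry yourself but then assert ``it works out'' without supplying the argument --- it does not. The correct and short argument is the one the paper gives in \Cref{lem:increasing}: evaluate the identity at $t = \eta_{p|_T}$, where the left side vanishes, so some summand satisfies $p|_{T+i}(A_1\eta_{p|_T} - A_0) \le 0$; since $A_1 \succ 0$ and $p|_{T+i}$ has nonnegative coefficients, $p|_{T+i}(A_1 t - A_0) \to +\infty$ as $t\to\infty$, so by the intermediate value theorem $p|_{T+i}(A_1 t - A_0)$ has a real root at some $t \ge \eta_{p|_T}$, hence $\eta_{p|_{T+i}} \ge \eta_{p|_T}$. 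This is the ``barrier/sign structure'' you correctly suspected was needed; you simply stated it in the wrong direction and then abandoned it in favor of a route that does not close.

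Everything else --- the well-definedness of the greedy (the sum identity forces some $p|_{T+j} \not\equiv 0$ whenever $p|_T \not\equiv 0$), the terminal-step identification of $\eta_{a_S\det(\cdot|_S)}$ with the QCQP value over $\R^S$, and the treatment of $\eta = -\infty$ --- is right.
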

The precise time complexity of this algorithm depends on the amount of time required to compute the values of $\eta_{p|_T}$.

In the case when $p = c_n^k$ is a characteristic coefficient, we will implement a few more details in order to compute this algorithm particularly efficiently.
We give the details of this algorithm in \Cref{sec:char_poly}, but here, we will give an overview of the runtime 
\begin{theorem}
    \label{thm:characteristic}
    When $p = c_n^k$, and $A_1$ is positive definite, we can implement \Cref{alg:greedy} so that it computes a value which is at most $\eta_p$ using
    \[
        O(kn^3 + k^2n^{\omega})
    \]
    arithmetic operations.
\end{theorem}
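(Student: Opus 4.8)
The plan is to combine the correctness of the greedy meta-algorithm (Theorem \ref{thm:greedy_works}) with a careful accounting of the arithmetic cost of each evaluation of $\eta_{p|_T}$ when $p = c_n^k$. The correctness part is immediate from Theorem \ref{thm:greedy_works}, so the entire content of this statement is the runtime bound, and the work is to show that the $k$ outer iterations can be executed within $O(kn^3 + k^2 n^{\omega})$ operations total. First I would recall the structure of \Cref{alg:greedy}: at step $t$ we have a set $T$ with $|T| = t-1$, and we must compute $\eta_{c_n^k|_{T+j}}$ for each of the $n - (t-1)$ candidates $j$, then take the argmax. So the key quantity to control is the cost of a single evaluation of the maximal real root of the univariate polynomial $c_n^k|_{T+j}(A_1 y - A_0)$, amortized appropriately across all candidates $j$ in a fixed round.

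The main structural idea I would use is that the conditional characteristic coefficient $c_n^k|_T$ has an explicit Schur-complement description. Writing $T \subseteq S$ and expanding $\det(X|_S)$ along the block indexed by $T$, one has $\det(X|_S) = \det(X|_T)\,\det\big((X/X_T)|_{S\setminus T}\big)$ where $X/X_T$ denotes the Schur complement of the $T$-block; summing over all $S \supseteq T$ of size $k$ gives
\[
    c_n^k|_T(X) = \det(X|_T)\cdot c_{[n]\setminus T}^{\,k - |T|}\big(X/X_T\big).
\]
Thus evaluating $c_n^k|_{T+j}(A_1 y - A_0)$ reduces to: (i) forming the Schur complement of the $(T+j)$-block of the matrix pencil $A_1 y - A_0$, which is itself an affine-in-$y$ family of matrices of size $(n - t) \times (n-t)$ after a rational reduction, and (ii) evaluating a lower-order characteristic coefficient of that pencil and extracting its top real root. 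Step (i) across all $j$ in one round can be organized so that one pays a single $O((n-t)^3)$ Cholesky/elimination of the $T$-block pencil and then $O(n^2)$ rank-one-type updates per candidate $j$, giving $O(n^3)$ per round and $O(kn^3)$ overall for the Schur-complement bookkeeping. For step (ii), since $A_1$ is positive definite we may simultaneously diagonalize and reduce $\eta_{c^m}$ to a symmetric eigenvalue-type computation; computing the characteristic coefficient $c^m$ of an $m \times m$ matrix (via the Faddeev–LeVerrier-type recurrence of \cite{baer2021faddeev}) and its relevant root costs $O(n^\omega)$ per round, contributing $O(k n^\omega)$, and the extra factor of $k$ in the $k^2 n^\omega$ term I expect to come from the fact that the coefficient list of the degree-$\le k$ univariate polynomial must be tracked and updated $k$ times per round.

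The steps in order: (1) state the Schur-complement identity for $c_n^k|_T$ and verify it by block determinant expansion; (2) show that, within round $t$, the pencils $(A_1 y - A_0)/((A_1y-A_0)|_{T})$ and all $n-t$ of its one-row-and-column extensions by $j$ can be maintained incrementally at total cost $O(n^3)$; (3) bound the cost of converting each such affine pencil into univariate coefficient form and computing its maximal real root using the positive-definiteness of $A_1$ together with the fast characteristic-coefficient evaluation of \cite{baer2021faddeev}, at cost $O(n^\omega + k)$ per candidate but $O(k n^\omega)$ per round after amortization; (4) multiply by $k$ rounds and collect terms.

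The hard part will be step (2)–(3): making the incremental maintenance of the family of Schur-complemented pencils genuinely cost $O(n^3)$ per round rather than $O(n^4)$, i.e. exploiting that successive candidates $j$ differ only by a single index so that the elimination can be done once and then updated, and simultaneously keeping the arithmetic \emph{exact} (as required by the oracle hypothesis) without coefficient blow-up. A secondary subtlety is justifying that "computing $\eta_p$" is legitimate in exact arithmetic at all — here I would lean on the fact that we only ever need to \emph{compare} roots $\eta_{p|_{T+j}}$ across candidates and ultimately to certify a value $\le \eta_p$, which can be reduced to sign evaluations of the univariate polynomials at finitely many candidate points (e.g. via a resultant or Sturm-sequence comparison restricted to the $\binom{n}{\le k}$ relevant roots), rather than exact root extraction; this is where the $k^2 n^\omega$ rather than $kn^\omega$ bound is paid, and I would present the details of this reduction in \Cref{sec:char_poly} as promised, keeping the argument here at the level of the cost accounting.
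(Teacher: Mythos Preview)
Your Schur-complement identity $c_n^k|_T(X) = \det(X|_T)\,c_{n-|T|}^{\,k-|T|}(X/X_T)$ is exactly the paper's \Cref{lem:schurCompCond}, and you are right that correctness is inherited from \Cref{thm:greedy_works}. However, the algorithmic plan you sketch does not actually reach the stated bound, and it misses the two ideas that drive the paper's cost analysis.

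First, working with the pencil $A_1 y - A_0$ symbolically breaks down at the Schur-complement step: $(A_1 y - A_0)/((A_1 y - A_0)|_T)$ is rational in $y$, not affine, so your ``rational reduction'' in step (i) is doing real work that you never cost. The paper sidesteps this entirely by fixing $\ell = k+1$ scalar evaluation points $t_1,\dots,t_\ell$ up front, maintaining the concrete matrices $X^{(i)} = A_1 t_i - A_0$ (and their successive Schur complements) throughout, and recovering the univariate polynomial $p|_{T+j}(A_1 t - A_0)$ by interpolation from its values at the $t_i$. This is where the factor $\ell \approx k$ enters: the $kn^3$ term is the cost of the $\ell$ initial diagonalizations, and the $k^2 n^{\omega}$ term is $k$ rounds times $\ell$ evaluation matrices times $O(n^{\omega})$ per rank-one diagonalization update. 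Your attribution of the $k^2 n^{\omega}$ to Sturm-sequence root comparisons is not what is happening.

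Second, your step (3) accounting is off. You write ``$O(n^{\omega}+k)$ per candidate but $O(kn^{\omega})$ per round after amortization,'' but there are $\Theta(n)$ candidates $j$ per round, and no amortization you describe turns $n\cdot n^{\omega}$ into $k n^{\omega}$. The missing ingredient is the paper's \Cref{thm:fastcomp}: given a diagonalization $X = Q\Lambda Q^{\intercal}$, the entire vector $(c_n^k|_{j}(X))_{j=1}^n$ can be computed in a single $O(n^{\omega})$ pass, via the rank-one formula of \Cref{lem:rank_one} and a vectorized expression $\diag(Q D Q^{\intercal})$ for a diagonal $D$ built from elementary symmetric polynomials of the eigenvalues. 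That is, the conditionals are computed \emph{simultaneously for all $j$}, not one candidate at a time; without this you are stuck at $O(n^{\omega+1})$ per round.
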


%%%%%%%%%%%%%%%%%%%%%%%%%%%%%%%%%%%%%%%%%%%%%%%%%%%%%%%%%%%%%%%%%%%%%%%%%%%%%%%%%%%%%%%%%%%%%%
%%%%%%%%%%%%%%%%%%%%%%%%%%%SPARSE LINEAR REGRESSION%%%%%%%%%%%%%%%%%%%%%%%%%%%%%%%%%%%%%%%%%
%%%%%%%%%%%%%%%%%%%%%%%%%%%%%%%%%%%%%%%%%%%%%%%%%%%%%%%%%%%%%%%%%%%%%%%%%%%%%%%%%%%%%%%%%%%%%%

\section{Sparse Linear Regression}
\label{sec:sparseReg}
Given a matrix $A \in \R^{n \times m}$, $b \in \R^m$ and $k \le n$, the sparse linear regression problem is to find
\[
    \min \{ \|A x - b\|_2^2 : \|\supp(x)\| \le k\}.
\]
In \cite{ben2022new}, it was shown that this regression problem can be cast as a sparse QCQP with one constraint. In particular, this $x$ is an optimum for the sparse linear regression problem if and only if it is optimal for the following sparse QCQP:

\begin{equation*}
\begin{aligned}
    \max\quad & x^{\intercal}(A^{\intercal}bb^{\intercal}A)x\\
    \st & x^{\intercal}A^{\intercal}Ax = 1\\
        & x \in \R^n\\
        &|\supp(x)| \le k.
\end{aligned}
\label{eq:sparse_reg}
\end{equation*}
It turns out that this particular equation has a particularly simple form that allows us to find a closed form solution for the maximum root.
\begin{theorem}
    \label{thm:sparse_reg_closed_form}
    Let $p$ be an LPM polynomial with nonnegative coefficients, and suppose that $p(A^{\intercal}A) \neq 0$.
    Then for the sparse regression problem in \Cref{eq:sparse_reg}, we have that
    \[
        \eta_p = \frac{p(A^{\intercal} ( I + bb^{\intercal})A)}{p(A^{\intercal}A)} - 1.
    \]
\end{theorem}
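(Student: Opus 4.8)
The plan is to compute $\eta_p$ directly by analyzing the univariate polynomial $g(t) = p\bigl(A^{\intercal}A\, t - A^{\intercal}bb^{\intercal}A\bigr)$ and showing that it has a single distinguished root given by the claimed formula. The key structural observation is that the matrix argument $A^{\intercal}A\, t - A^{\intercal}bb^{\intercal}A = A^{\intercal}(tI - bb^{\intercal})A$ is a \emph{rank-one perturbation} of $t\, A^{\intercal}A$, since $bb^{\intercal}$ has rank one. So for each fixed $S \in \Delta$, the principal minor $\det\bigl((A^{\intercal}(tI - bb^{\intercal})A)|_S\bigr)$ should, by the matrix determinant lemma applied columnwise, be affine-linear in the rank-one direction, and hence the whole polynomial $g(t)$ will factor in a controlled way.

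Concretely, the first step is to write $A^{\intercal}(tI - bb^{\intercal})A = A^{\intercal}\bigl((t-1)I + (I - bb^{\intercal})\bigr)A$ and instead substitute $s = t - 1$, so that we are looking at $p\bigl(A^{\intercal}A\, s + A^{\intercal}(I - bb^{\intercal})A\bigr)$; the claimed answer $\eta_p = p(A^{\intercal}(I + bb^{\intercal})A)/p(A^{\intercal}A) - 1$ then corresponds to a root in the $s$ variable at $s^\star = p(A^{\intercal}(I+bb^{\intercal})A)/p(A^{\intercal}A)$, which is suggestive but has the ``wrong sign'' on $bb^{\intercal}$ — so the real work is to track that sign carefully. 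The cleaner route is probably: for each $S$, use the Cauchy–Binet / matrix-determinant-lemma expansion to show
\[
    \det\bigl((A^{\intercal}(tI - bb^{\intercal})A)|_S\bigr) = \det\bigl(t\,(A^{\intercal}A)|_S\bigr) \cdot \left(1 - \frac{q_S(t)}{t}\right)
\]
for an explicit degree-$0$ or degree-$1$ rational correction coming from the rank-one term, then sum over $S$ weighted by $a_S$. Summing these, $g(t) = t^{k} p(A^{\intercal}A) - (\text{rank-one correction term})$, and because everything collapses to a single power of $t$ minus a constant (the rank-one term kills all intermediate degrees), $g$ is essentially linear in $t^{k}$ divided through, giving a unique relevant root. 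Then I would identify the constant term: the correction should assemble into exactly $p(A^{\intercal}A) \cdot \bigl(\eta_p + 1\bigr) = p(A^{\intercal}(I + bb^{\intercal})A)$ by reversing the same matrix-determinant-lemma identity applied to $A^{\intercal}(I + bb^{\intercal})A$ (note the $+$ sign now appears correctly because $I + bb^{\intercal} = 2I - (I - bb^{\intercal})$, or more directly because the formula expanding $p(A^{\intercal}A + A^{\intercal}bb^{\intercal}A)$ in the rank-one direction produces the same linear functional of $bb^{\intercal}$ that shows up in $g$).

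The main obstacle I anticipate is bookkeeping the rank-one expansion at the level of the \emph{sum} $p = \sum_S a_S \det(X|_S)$ rather than a single determinant: I need to check that the linear functional $L(bb^{\intercal})$ extracted from $\frac{d}{d\epsilon}\big|_{\epsilon=0} p(A^{\intercal}A + \epsilon A^{\intercal}bb^{\intercal}A)$ is exactly what governs $g(t)$'s lower-order behavior, and that there are genuinely no higher-degree-in-$bb^{\intercal}$ contributions — this is clear for a single minor since $\mathrm{rank}(bb^{\intercal}) = 1$ forces the expansion to terminate after one step, but I should state it cleanly as ``$p$ restricted to the line $\{X_0 + \epsilon v v^{\intercal}\}$ is affine in $\epsilon$'' and cite that determinants are affine along rank-one lines. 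Once that is in hand, solving $g(t) = 0$ is immediate: $p(A^{\intercal}A)\, t^{k}$-type term equals the constant, and since $p(A^{\intercal}A) \neq 0$ by hypothesis the root is forced to be $p(A^{\intercal}(I+bb^{\intercal})A)/p(A^{\intercal}A) - 1$, and one checks it is the \emph{maximum} real root (the others, if any, coming from the $t^{k}$ factor at $t = 0$ are smaller, assuming $b \neq 0$). A small side check: handle the degenerate case $\eta_p = -\infty$ versus $p(A^{\intercal}A) \neq 0$ to make sure the statement is consistent.
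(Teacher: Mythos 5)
Your proposal is correct and follows essentially the same route as the paper: both exploit the rank-one structure of $A^{\intercal}bb^{\intercal}A$ to conclude that $g(t) = p(A^{\intercal}A\,t - A^{\intercal}bb^{\intercal}A)$ factors as $t^{k-1}(at - b)$ with $a = p(A^{\intercal}A)$, and then identify $a + b = p(A^{\intercal}(I + bb^{\intercal})A)$. The only real difference is that the paper extracts $a + b$ by the shortcut of evaluating $g$ at $t = -1$, whereas you compute the coefficients directly via a per-minor matrix-determinant-lemma expansion; your route is fine, but you should phrase the per-minor step as the polynomial identity $\det(tM_S - v_Sv_S^{\intercal}) = t^k\det(M_S) - t^{k-1}\,v_S^{\intercal}\adj(M_S)\,v_S$ so it does not secretly assume each $(A^{\intercal}A)|_S$ is invertible (and note the degenerate case $A^{\intercal}b = 0$ still gives $\eta_p = 0$, not $-\infty$, since $g$ still vanishes at $t = 0$).
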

This closed form solution bares some resemblance to Cramer's rule for finding the solution to a system of linear equations; if $p$ is the determinant, then this in fact precisely recovers Cramer's rule for solving this regression problem.
It is important to note that computing this value does not require explicitly computing the roots of a polynomial; it only requires computing the value of this polynomial in two distinct points.
As a result of this, our algorithmic methods become much faster.

\begin{theorem}
    \label{thm:sparse_reg_fast}
    For sparse linear regression QCQPs, as defined in \Cref{eq:sparse_reg}, and when $p = c_n^k$, we can implement \Cref{alg:greedy} so that it requires $O(n^3 + kn^{\omega})$ arithmetic operations in exact arithmetic.
\end{theorem}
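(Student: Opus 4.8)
The plan is to turn each of the $k$ iterations of \Cref{alg:greedy} into a bounded number of dense linear-algebra operations on $n\times n$ matrices, using the closed form of \Cref{thm:sparse_reg_closed_form} to eliminate all polynomial root extraction. Set $M_0 = A^{\intercal}A$, $v = A^{\intercal}b$, and $M_1 = A^{\intercal}(I+bb^{\intercal})A = M_0 + vv^{\intercal}$. At a generic iteration the current support is a set $T$ with $|T| = t-1$, and we must find $j\notin T$ maximizing $\eta_{p|_{T+j}}$. Each $p|_{T+j}$ is again an LPM polynomial with nonnegative coefficients, so \Cref{thm:sparse_reg_closed_form} applies and gives
\[
    \eta_{p|_{T+j}} = \frac{(p|_{T+j})(M_1)}{(p|_{T+j})(M_0)} - 1 .
\]
Thus the selection step only requires evaluating the conditioned characteristic coefficient $c_n^k|_{T+j}$ at the two fixed matrices $M_0$ and $M_1$; in particular we never form or extract the maximal root of a polynomial of degree $\Theta(k)$, which is exactly the source of the extra factor of $k$ in \Cref{thm:characteristic}.

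Next I would reduce conditioning to Schur complements. From the determinant identity $\det(M|_S) = \det(M|_U)\,\det\!\big((M/M|_U)|_{S\setminus U}\big)$ for $U\subseteq S$, one gets, for any symmetric $M$,
\[
    (c_n^k|_U)(M) = \det(M|_U)\; c_{\,n-|U|}^{\,k-|U|}\!\big(M/M|_U\big).
\]
Hence I maintain the Schur complements $N_0 := M_0/M_0|_T$ and $N_1 := M_1/M_1|_T$ (each of size $m := n-|T|$) together with the scalars $\det(M_\varepsilon|_T)$; since $T$ grows by one index per iteration, each update is a single elimination pivot costing $O(m^2)$. Writing $\ell := k-|T|$, the quantity to be maximized over $j\notin T$ then equals, up to the $j$-independent factor $\det(M_1|_T)/\det(M_0|_T)$,
\[
    \frac{(c_m^{\ell}|_{\{j\}})(N_1)}{(c_m^{\ell}|_{\{j\}})(N_0)} ,
\]
because $(c_n^k|_{T+j})(M) = \det(M|_T)\,(c_m^{\ell}|_{\{j\}})(M/M|_T)$.

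It remains to compute, for a symmetric matrix $N$ of size $m$ and a fixed $\ell$, the $m$ numbers $(c_m^{\ell}|_{\{j\}})(N) = \sum_{S\ni j,\,|S|=\ell}\det(N|_S)$, for $N\in\{N_0,N_1\}$. The key observation is that these are the diagonal of a single matrix: from $(c_m^{\ell}|_{\{j\}})(N) = c_m^{\ell}(N) - c_{m-1}^{\,\ell}(N|_{[m]\setminus j})$, and from the fact that $c_{m-1}^{\,\ell}(N|_{[m]\setminus j})$ is (up to a global sign) the $(j,j)$ entry of the coefficient of $x^{\,n-k-1}$ in $\adj(xI-N)$, all $m$ values are read off the diagonal of one explicit matrix polynomial in $N$ of degree $\ell$ whose remaining coefficients are characteristic coefficients of $N$; note the coefficient index $n-k-1$ is the same in every iteration. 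This is what removes the $O(kn^3)$ term: there are no per-candidate pivots or evaluations inside an iteration. To avoid doing the work twice, I would also record that $N_1$ is a rank-one update of $N_0$ — applying Sherman--Morrison through $M/M|_U = \big((M^{-1})_{\overline U\,\overline U}\big)^{-1}$ gives $N_1 = N_0 + ww^{\intercal}$ with $w$ explicitly computable — so the data for $N_1$ is obtained from that for $N_0$ by rank-one corrections.

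The main obstacle, and the step that distinguishes this from \Cref{thm:characteristic}, is producing that degree-$\ell$ matrix polynomial within an amortized budget of $O(n^\omega)$ per iteration rather than the naive $O(\ell n^\omega)=O(kn^\omega)$ (which would only recover $O(k^2n^\omega)$). The fix should be to maintain the needed diagonal data across the run: passing from $T$ to $T\cup\{j^\ast\}$ replaces $N_\varepsilon$ by its Schur complement with respect to the pivot $j^\ast$ and decreases $\ell$ by one, and one has to check that the powers $N_\varepsilon^{r}$ (equivalently, the $x^{\,n-k-1}$-coefficient of $\adj(xI-N_\varepsilon)$) are modified by this deflation only through low-rank corrections of bounded rank, which can be propagated in $O(n^2)$ or $O(n^\omega)$ per step. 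Granting this, the total cost is $O(n^2 m)$ once to form $M_0, M_1$, plus $O(n^2)$ per iteration for the pivots, plus $O(n^\omega)$ amortized per iteration for the diagonal computation, i.e. $O(n^3 + kn^{\omega})$ arithmetic operations, all in exact arithmetic. I expect the careful bookkeeping of how the deflation and the decrement of $\ell$ act on the adjugate-coefficient data to be the genuinely delicate part.
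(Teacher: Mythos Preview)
Your reduction to two fixed evaluation matrices via \Cref{thm:sparse_reg_closed_form} is exactly the mechanism the paper uses (this is why $\ell=2$ in the general bound $O(\ell n^3 + k\ell n^\omega)$), and your Schur-complement bookkeeping matches as well. The divergence, and the gap, is in how you produce the vector $\big((c_m^{\ell}|_{\{j\}})(N)\big)_{j}$ in $O(n^\omega)$ per iteration.

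Your plan is to read these values off the diagonal of the coefficient of $x^{n-k-1}$ in $\adj(xI-N)$ and to maintain that matrix under the rank-one Schur deflation. But the Faddeev--LeVerrier coefficient in question is a degree-$\ell$ matrix polynomial in $N$, and $(N+vv^{\intercal})^r - N^r$ has rank up to $r$, not bounded rank. Propagating the powers $N^0,\dots,N^{\ell}$ through one deflation therefore costs $\Omega(\ell)$ rank corrections of growing rank; over $k$ rounds this only recovers $O(k^2 n^\omega)$ (or worse), not $O(kn^\omega)$. The ``bounded rank'' assertion is precisely the missing idea, and you yourself flag it as the delicate part; as stated it does not go through.

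The paper sidesteps this by maintaining a \emph{diagonalization} $N = Q\Lambda Q^{\intercal}$ of each of the two evaluation matrices. A rank-one update to an eigendecomposition is classical and costs $O(n^\omega)$. Given $Q,\Lambda$, the entire conditional vector is obtained from the closed form
\[
(c_m^{\ell}|_{\{j\}})(N) \;=\; N_{jj}\,c_m^{\ell-1}(\Lambda)\;-\;N_j^{\intercal}Q\,\nabla c_m^{\ell-1}(\Lambda)\,Q^{\intercal}N_j,
\]
where $\nabla c_m^{\ell-1}(\Lambda)$ is diagonal with entries $e_{m-1}^{\ell-2}(\lambda_{\hat\imath})$; all of these elementary-symmetric values are computed simultaneously in $O(\ell m\log m)$ by a tree recurrence, and vectorizing over $j$ then costs $O(n^2)$. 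Hence each of the $k$ iterations costs $O(n^\omega)$ on two matrices, and the two initial diagonalizations cost $O(n^3)$, giving $O(n^3 + kn^\omega)$. This spectral maintenance is the ingredient your proposal lacks.
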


\subsection{A Probabilistic Interpretation}
\label{sec:probabilistic}
We will also give a probabilistic interpretation of this value of $\eta_p$ for sparse regression.
This interpretation can be used to give intuition about when $\eta_p$ is a good approximation for sparse regression.
We take a LPM polynomial $p = \sum_{S \in \Delta} a_S \det(X|_S)$ which is supported on a set $\Delta$.
Given a matrix $A$, we consider the following probability distribution over elements of $\Delta$: we define the probability of choosing $S \in \Delta$ to be
\[
    \Pr(S) = \frac{a_S \det(A^{\intercal}A|_S)}{p(A^{\intercal}A)}
\]
If $a_S = 1$ for all $S \in \binom{[n]}{k}$, then sampling from this distribution is known as voluming sampling \cite{deshpande2006matrix}.
This is also related to the theory of determinantal point processes and strongly Rayleigh distributions \cite{anari2016monte}.
These sampling methods are often used to produce low rank sketches of a matrix; our results can be viewed as providing a deterministic method for finding such a sketch.

Intuitively, this probability distribution weights the subsets of the columns of $A$ according to their diversity; the larger the volume that that subset of columns of $A$ span, the more likely they are to be selected.
The idea of using diversity as a prior for regression was also considered in statistics, as in \cite{kojima2016determinantal}.

We also define $\ell(A_S, b) = \max \{\|b\|^2 - \|A_Sx - b\|_2^2 : x \in \R^k\}$, the squared norm of the projection of $b$ onto the column space of $A$.
\begin{theorem}
    \label{thm:probabilistic_eq}
    If $p(A^{\intercal}A) \neq 0$, then
    \[
        \eta_{p|_T} = \EE[\ell(A_S, b) | T \subseteq S].
    \]
\end{theorem}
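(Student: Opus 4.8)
The plan is to reduce the identity to \Cref{thm:sparse_reg_closed_form} applied to the \emph{conditional} LPM polynomial $p|_T$, together with the single-subset form of Cramer's rule from the introduction. First I would observe that $p|_T = \sum_{T \subseteq S \in \Delta} a_S \det(X|_S)$ is again an LPM polynomial with nonnegative coefficients, supported on the subfamily $\{S \in \Delta : T \subseteq S\}$ (assuming it is not identically zero). Assuming moreover that $p|_T(A^{\intercal}A) \neq 0$ — equivalently, that $T$ is contained in some $S \in \Delta$ with $a_S > 0$ whose columns $A_S$ are linearly independent, which is exactly the condition under which $\EE[\ell(A_S,b)\mid T\subseteq S]$ is well defined — \Cref{thm:sparse_reg_closed_form} applied to $p|_T$ gives
\[
    \eta_{p|_T} = \frac{p|_T\!\left(A^{\intercal}(I + bb^{\intercal})A\right)}{p|_T(A^{\intercal}A)} - 1 ,
\]
so it suffices to show this right-hand side equals the claimed conditional expectation.

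For the second step I would expand that conditional expectation straight from the definition of the distribution: for $S \supseteq T$ one has $\Pr(S \mid T \subseteq S) = a_S \det((A^{\intercal}A)|_S)/p|_T(A^{\intercal}A)$, so only subsets with $\det((A^{\intercal}A)|_S) \neq 0$ carry positive mass. For such an $S$, using $(A^{\intercal}NA)|_S = A_S^{\intercal}NA_S$ for any symmetric $N$, rearranging the least-squares identity from the introduction applied to $A_S$ in place of $A$ yields
\[
    \ell(A_S, b) = \frac{\det\!\left((A^{\intercal}(I + bb^{\intercal})A)|_S\right)}{\det((A^{\intercal}A)|_S)} - 1 .
\]
Multiplying by $\Pr(S\mid T\subseteq S)$ cancels $\det((A^{\intercal}A)|_S)$ and turns the $S$-term into $a_S\bigl(\det((A^{\intercal}(I+bb^{\intercal})A)|_S) - \det((A^{\intercal}A)|_S)\bigr)/p|_T(A^{\intercal}A)$; summing over all $T\subseteq S\in\Delta$ collapses this into $\bigl(p|_T(A^{\intercal}(I+bb^{\intercal})A) - p|_T(A^{\intercal}A)\bigr)/p|_T(A^{\intercal}A)$, which is precisely the expression for $\eta_{p|_T}$ above. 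This finishes the argument.

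Given \Cref{thm:sparse_reg_closed_form}, the computation above is routine, so I do not anticipate a serious obstacle; the one point needing care is the bookkeeping for degenerate subsets $S$ with $\det((A^{\intercal}A)|_S) = 0$, where the naive Cramer's rule identity for $\ell(A_S,b)$ is unavailable. Such $S$ carry zero probability, contribute nothing to $p|_T(A^{\intercal}A) = \sum_{T\subseteq S} a_S\det((A^{\intercal}A)|_S)$, and likewise contribute nothing to $p|_T(A^{\intercal}(I+bb^{\intercal})A)$ since a rank deficiency of $A_S$ is inherited by $A_S^{\intercal}(I+bb^{\intercal})A_S$; hence they may be dropped from every sum involved without affecting either side. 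I would also make explicit that $\eta_{p|_T}$, \Cref{thm:sparse_reg_closed_form} for $p|_T$, and the conditioning itself all presuppose $p|_T(A^{\intercal}A)\neq 0$, and note that this holds automatically for the sets $T$ encountered while running \Cref{alg:greedy}.
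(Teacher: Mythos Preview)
Your proposal is correct and follows essentially the same route as the paper: both apply \Cref{thm:sparse_reg_closed_form} to $p|_T$ and then expand the ratio term-by-term using the single-subset identity $\det((A^{\intercal}(I+bb^{\intercal})A)|_S) = (1+\ell(A_S,b))\det((A^{\intercal}A)|_S)$, which the paper derives on the spot from the matrix determinant lemma while you cite the Cramer's rule formulation from the introduction. Your treatment of the degenerate subsets with $\det((A^{\intercal}A)|_S)=0$ is more careful than the paper's, which simply assumes invertibility throughout.
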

For this reason, we think of this heuristic in this case as being a diversity weighted version of sparse linear regression.
If we think that more diverse sets of columns of $A$ will be more effective on average than less diverse sets, then this method will produce better results.
In particular, we belive this method is not particularly effective when the underlying matrix $A$ is essentially random, as in that case, this diversity assumption does not hold.
The case when $A$ is a Gaussian random matrix is essentially the setting of the compressed sensing problem \cite{candes2005decoding}.

However, it seems that in real world data sets, diverse sets of columns often are preferable for regression, and so we will give experimental results for our heuristic on real world data sets in \Cref{subsec:expReg}.

%%%%%%%%%%%%%%%%%%%%%%%%%%%%%%%%%%%%%%%%%%%%%%%%%%%%%%%%%%%%%%%%%%%%%%%%%%%%%%%%%%%%%%%%%%%%%%
%%%%%%%%%%%%%%%%%%%%%%%%%%%SPARSE LINEAR REGRESSION%%%%%%%%%%%%%%%%%%%%%%%%%%%%%%%%%%%%%%%%%
%%%%%%%%%%%%%%%%%%%%%%%%%%%%%%%%%%%%%%%%%%%%%%%%%%%%%%%%%%%%%%%%%%%%%%%%%%%%%%%%%%%%%%%%%%%%%%
\section{Sparse PCA}
\label{sec:sparsePCA}
The sparse PCA problem is to find the maximum sparse eigenvector of a given symmetric matrix, or formally, for a given symmetric matrix $A$, we define the maximum $k$-sparse eigenvalue of $A$ to be the value of the following program:
\begin{equation*}
\begin{aligned}
    \max\quad & x^{\intercal}Ax\\
    \st & x^{\intercal}x = 1\\
        & x \in \R^n\\
        &|\supp(x)| \le k.
\end{aligned}
\label{eq:sparse_pca}
\end{equation*}
For a given matrix $A$, we define $\lambda^{(k)}(A)$ to be the value of this program.

\begin{lemma}
    Let $c_n^k$ be the characteristic coefficient of $X$ of degree $k$, then the
    \[
        \lambda^{(k)}(A) \ge \max \{ t : c_n^k(t I - X) = 0\} \ge \lambda_k(A),
    \]
    where $\lambda_k(A)$ is the $k^{th}$ largest eigenvalue of the matrix $A$.
\end{lemma}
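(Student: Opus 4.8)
The plan is to prove the two inequalities separately: the left one is a direct instance of \Cref{thm:root_thm}, and the right one is the classical fact that the roots of the $(n-k)$-th derivative of a real-rooted polynomial interlace its roots, once one recognizes that $c_n^k(tI-A)$ \emph{is} (a scalar multiple of) that derivative of the characteristic polynomial of $A$. Throughout I read the display with $X=A$, and note that $c_n^k$ is an LPM polynomial of degree $k$ all of whose coefficients equal $1$, hence in particular nonnegative.

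For the left inequality, observe that the sparse PCA program is exactly an instance of the sparse QCQP with $A_1 = I$ (which is positive definite) and $A_0 = A$. Applying \Cref{thm:root_thm} to this instance with $p = c_n^k$ produces a feasible point $x$ whose value is at least $\eta_g$, where $g(t) = c_n^k(It - A) = c_n^k(tI-A)$. Since $\lambda^{(k)}(A)$ is by definition the supremum of the objective over feasible points, this gives $\lambda^{(k)}(A) \ge \eta_g = \max\{t : c_n^k(tI - A) = 0\}$. That this maximum is taken over a nonempty set — i.e. that $g$ genuinely has a real root, so $\eta_g$ is finite — will drop out of the interlacing argument below.

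For the right inequality, I would first establish the identity
\[
    c_n^k(tI - A) \;=\; \frac{1}{(n-k)!}\,\frac{d^{n-k}}{dt^{n-k}}\,\det(tI - A).
\]
Writing $q(t) = \det(tI - A) = \prod_{i=1}^n (t - \lambda_i)$ with $\lambda_1,\dots,\lambda_n$ the eigenvalues of $A$, the matrix $tI - A$ has eigenvalues $t - \lambda_i$, so $c_n^k(tI - A)$ equals the $k$-th elementary symmetric polynomial $e_k(t-\lambda_1,\dots,t-\lambda_n)$; expanding $q(t+u) = \prod_{i=1}^n\bigl((t-\lambda_i)+u\bigr)$ as a polynomial in $u$ and comparing the coefficient of $u^{n-k}$ with the Taylor coefficient $q^{(n-k)}(t)/(n-k)!$ yields the identity. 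Hence $\max\{t : c_n^k(tI - A) = 0\}$ is precisely the largest root of $q^{(n-k)}$. Since $A$ is symmetric, $q$ is real-rooted, so by Rolle's theorem each successive derivative is real-rooted with roots weakly interlacing those of the previous one; after $n-k$ differentiations $q^{(n-k)}$ is a real-rooted polynomial of degree $k$ (so $\eta_g$ is finite, closing the gap above), and its largest root lies in the interval spanned by the largest and the $k$-th-smallest eigenvalues of $A$. In particular it is at least $\lambda_k(A)$, completing the chain.

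I expect the only real obstacle to be the interlacing bookkeeping when $A$ has a repeated eigenvalue, since Rolle's theorem on its own only supplies a critical point strictly \emph{between} two distinct consecutive roots. I would resolve this with the standard multiplicity fact — a root of multiplicity $m \ge 2$ of $q$ is a root of multiplicity $m-1$ of $q'$ — so that the count of roots of each $q^{(j)}$ lying weakly above any given real number behaves exactly as in the simple-spectrum case; alternatively one can perturb $A$ to have simple spectrum and pass to the limit using continuity of polynomial roots. The remaining point to watch is purely notational: one must fix the convention indexing the eigenvalues of $A$ so that the eigenvalue at which the iterated Rolle sandwich bottoms out is exactly the one the statement denotes $\lambda_k(A)$.
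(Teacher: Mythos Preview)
Your proposal is correct and follows essentially the same approach as the paper: the left inequality via \Cref{thm:root_thm} with $A_1=I$, $A_0=A$, and the right inequality by identifying $c_n^k(tI-A)$ with the $(n-k)$-th derivative of the characteristic polynomial and applying iterated Rolle interlacing. If anything, your version is more careful than the paper's own proof: you include the correct normalization factor $1/(n-k)!$ (the paper omits it, which is harmless for the roots but technically inaccurate as an identity), you explicitly verify that $g$ has a real root so that $\eta_g$ is finite, and you address the repeated-eigenvalue case, which the paper glosses over.
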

The fact that $\lambda^{(k)}(A) \ge \lambda_k(A)$ has been well known, since by the Cauchy interlacing theorem, for every $S \in \binom{[n]}{k}$, $\lambda_{max}(A|_S) \ge \lambda_k(A)$.
Interestingly though, this definition also is a basis invariant property of the matrix $A$, i.e. it only depends on the eigenvalues of $A$, since the polynomial $c_n^k$ only depends on the eigenvalues of its input.

We give an especially efficient algorithm for computing sparse PCA
\begin{theorem}
    \label{thm:sparse_pca_fast}
    For sparse PCA QCQPs, as defined in \Cref{eq:sparse_pca}, and when $p = c_n^k$, we can implement \Cref{alg:greedy} so that it requires $O(n^3 + k^2n^{\omega})$ arithmetic operations in exact arithmetic.
\end{theorem}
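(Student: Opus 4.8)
The plan is to specialize the general implementation of \Cref{alg:greedy} for characteristic coefficients (i.e. \Cref{thm:characteristic}, which gives $O(kn^3 + k^2n^\omega)$) to the sparse PCA setting, where $A_1 = I$, and to exploit this special structure to shave the $O(kn^3)$ term down to $O(n^3)$. First I would recall from \Cref{sec:char_poly} (which I am allowed to assume) how $\eta_{p|_T}$ is computed for $p = c_n^k$: the key quantity at each greedy step is a maximum root of a univariate polynomial built from conditioned characteristic coefficients $c_n^k|_{T+j}$ evaluated along the line $A_1 t - A_0$. The cost of a generic step in \Cref{thm:characteristic} breaks into (a) setting up the relevant matrices once per outer iteration, which is where the $O(n^3)$-per-step (hence $O(kn^3)$ total) comes from, and (b) the per-candidate updates, which contribute the $O(k^2 n^\omega)$ term. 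The claim is that when $A_1 = I$ the setup in (a) only needs to be done \emph{once}, not once per outer iteration.

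The reason is the basis-invariance remarked after the sparse PCA lemma: $c_n^k(tI - X)$ depends only on the eigenvalues of $X$, so conditioning on a set $T$ and then moving along the line $tI - A$ can be reorganized around a single spectral/elimination preprocessing of $A$. Concretely, I would argue that after an initial $O(n^3)$ reduction of $A$ (a Cholesky-type or eigen-type factorization, or the Faddeev--LeVerrier-style computation of \cite{baer2021faddeev} applied once), each subsequent conditioning step $p|_{T+j}$ can be obtained by a rank-one or single-index Schur-complement update rather than a fresh $O(n^3)$ computation. Summing the per-step costs then gives $O(n^3)$ for all the setup work across the $k$ outer iterations, plus the unchanged $O(k^2 n^\omega)$ coming from evaluating $c_n^k|_{T+j}$ over the $n$ candidate indices $j$ in each of the $k$ rounds (each such evaluation reducing to computing characteristic coefficients of an $(n - |T|)\times(n-|T|)$-ish matrix, amortized via the maintained factorization). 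Adding the two contributions yields the stated $O(n^3 + k^2 n^\omega)$.

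The main obstacle I expect is making the amortization in the previous paragraph rigorous: one must show that the data structure carried from one outer iteration to the next (the factorization of the relevant $n \times n$ matrix restricted to indices outside $T$, together with whatever is needed to read off $\eta_{p|_{T+j}}$) can be updated in $o(n^3)$ per step and still supports all $n$ candidate queries within the $O(k n^\omega)$ budget per round. This requires checking that the conditioning operation $p \mapsto p|_{T+j}$ interacts cleanly with the maintained factorization — essentially that adding one forced index to $T$ corresponds to a bordered-matrix / Schur-complement step whose cost is $O(n^2)$ (or $O(n^\omega/n \cdot k)$ amortized), not $O(n^3)$. A secondary point to verify is that we only ever need a \emph{value at most} $\eta_p$ (as in the statement), so any root-isolation or root-bounding step along the line $tI - A$ can be done with a bounded number of arithmetic operations without affecting the complexity. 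Once these bookkeeping facts are in place, the proof is just the sum $O(n^3) + k \cdot O(k n^\omega) = O(n^3 + k^2 n^\omega)$, citing \Cref{thm:greedy_works} for correctness of the produced set $T$.
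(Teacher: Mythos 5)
The proposal has the right instinct — exploit $A_1 = I$ and the basis invariance of $c_n^k$ — but it misidentifies where the $O(kn^3)$ term in \Cref{thm:characteristic} actually comes from, and as a result proposes a fix for a cost that the algorithm has already avoided. In \Cref{alg:faster}, the $O(\ell n^3) \approx O(kn^3)$ cost is entirely in the \emph{initialization}: the algorithm must diagonalize $\ell \approx k+1$ evaluation matrices $X^{(i)} = t_i A_1 - A_0$, one per interpolation node, each at $O(n^3)$. The conditioning steps $p \mapsto p|_{T+j}$ in the $k$ outer rounds are \emph{already} handled by rank-one updates to the maintained eigendecompositions (UpdateDiagonalization) at $O(n^\omega)$ per matrix per round; nothing there costs $O(n^3)$ per iteration. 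So your claim that ``the setup in (a) only needs to be done once, not once per outer iteration'' is aimed at a cost that was never per-iteration to begin with, and your amortization-via-Schur-complement argument reproduces machinery that is already present in the general algorithm rather than providing a new savings.

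The actual point the paper uses is simpler and is specific to the initialization: when $A_1 = I$, the $\ell$ matrices $X^{(i)} = t_i I - A_0$ all commute — they are simultaneously diagonalizable, since if $-A_0 = Q\Lambda Q^{\intercal}$ then $t_i I - A_0 = Q(t_i I + \Lambda)Q^{\intercal}$. Hence a single $O(n^3)$ eigendecomposition of $A_0$ yields diagonalizations of all $\ell$ evaluation matrices at once, replacing the $O(\ell n^3)$ initialization by $O(n^3)$. Combined with the unchanged per-round cost of $O(\ell n^\omega)$ over $k$ rounds (with $\ell = O(k)$), this gives $O(n^3 + k^2 n^\omega)$. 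Your proposal gestures at this via ``basis-invariance'' and ``a single spectral preprocessing,'' but without pinning down that the savings is the shared eigenbasis across the $\ell$ interpolation nodes, the argument as written does not account for the $\ell$-fold reduction and would not survive scrutiny.
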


We will give experimental results for our heuristic on real world data sets in \Cref{subsec:expPCA}.

\section{Experimental Results}
\label{sec:experimental}
In this section, we will describe our experimental results for sparse PCA and sparse linear regression.

We will exclusively use the polynomial $p = c_n^k$ and the algorithm detailed in \Cref{sec:char_poly} for our experiments, and we will focus on the sparse linear regression problem defined in \Cref{eq:sparse_reg} and the sparse PCA problem defined in \Cref{eq:sparse_pca}.

\subsection{Some Notes on Implementation}
We implemented our methods in C++ using the Eigen linear algebra package \cite{eigenweb}. We ran our experiments on an 3.60GHz 4-Core Intel i7-9700K CPU with 16 GB of RAM.
Our code is available on github at \url{https://github.com/ootks/sparse_qcqps}.

\subsection{Experimental Results for Sparse Linear Regression}
\label{subsec:expReg}
We will evaluate our method in comparison to two existing methods, which we will describe briefly here:
\begin{itemize}
    \item LASSO\cite{tibshirani1996regression} - This minimizes the standard $L_2$ loss with an additional $L_1$ penalty added on, i.e. for some constant $\alpha$, it minimizes
    \[
        \|A x - b\|^2 + \alpha \|x\|_1.
    \]
    This is an extremely popular method for sparse linear regression.
    It is noteworthy that LASSO does not allow the user to directly specify a sparsity level, rather as $\alpha$ increases, solutions tend to become more sparse.
    Therefore, to use this as a feature selection algorithm, we first find $x$ minimizing this loss for a number of values of $\alpha$, sort the indices according to the size of their coefficients in $x$, and then consider the top $k$ indices for each $k$ to be the selected subset.
    We then evaluate performance by regressing $b$ using the columns of $A$ contained in $S$ and measuring the $L_2$ loss.
    \item Orthogonal Matching Pursuit\cite{tropp2004greed} - This is an alterative greedy method for performing sparse linear regression. As in \Cref{alg:greedy}, we construct a set $S$ by adding one element in each round.
    If $T$ is the set that has been constructed so far, OMP selects the next element to maximize
    \[
        \ell(A_{T+i}, b),
    \]
    and then projects all columns of $A$ onto the orthogonal complement of the column $A_i$, and also projects $b$ onto this orthogonal complement.
\end{itemize}
We test these three methods on 4 data sets, which we refer to as \emph{Communities}, \emph{Superconductivity}, \emph{Diabetes}, and \emph{Wine}.
These datasets can all be found on the UCI Machine Learning Repository\cite{Dua:2019}, except for Diabetes, which was found in \cite{efron2004least}.

We normalize so that each column has mean 0 and variance 1.
We will evaluate both the regression loss for each method for a number of different values of $k$, and also give the time required for each method to complete.

\begin{figure}[H]
    \centering
    \textbf{Communities}\par\medskip
    \begin{subfigure}[b]{0.4\textwidth}
        \includegraphics[width=\textwidth]{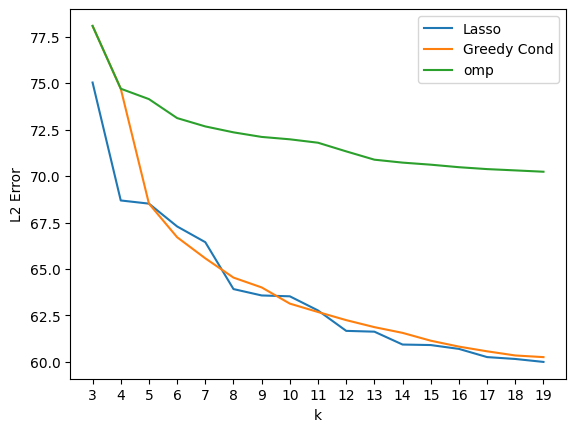}
        \caption{A plot of regression loss against $k$ for 3 methods. This data set had 101 features.}
    \end{subfigure}
    \begin{subfigure}[b]{0.4\textwidth}
        \begin{tabular}{c c c c}
            $k$ & Greedy Conditioning & OMP & LASSO \\
            \hline
            3 & 8 & 7 & 3\\
            6 & 12 & 17 & 24\\
            9 & 17 & 25 & 30\\
            12 & 21 & 33 & 33\\
            15 & 25 & 41 & 33\\
            17 & 28 & 41 & 33\\
        \end{tabular}
        \vspace{0.5in}
        \caption{Selected times in milliseconds. For LASSO, only the run that produced a set of a given size was timed.}
    \end{subfigure}
\end{figure}
\begin{figure}[H]
    \centering
    \textbf{Superconductivity}\par\medskip
    \begin{subfigure}[b]{0.4\textwidth}
        \includegraphics[width=\textwidth]{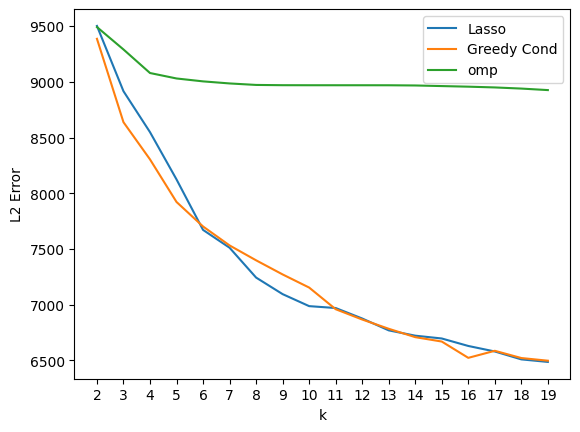}
        \caption{A plot of regression loss against $k$ for 3 methods. This dataset had 82 features.}
    \end{subfigure}
    \begin{subfigure}[b]{0.4\textwidth}
        \begin{tabular}{c c c c}
            $k$ & Greedy Conditioning & OMP & LASSO \\
            \hline
            3 & 24 & 76 & 30\\
            6 & 27 & 126 & 41\\
            9 & 29 & 171 & 49\\
            12 & 31 & 543 & 41\\
            15 & 34 & 349 & 96\\
            18 & 36 & 358 & 275\\
            \hline
        \end{tabular}
        \vspace{0.5in}
        \caption{Selected times in milliseconds. For LASSO, only the run that produced a set of a given size was timed.}
    \end{subfigure}
\end{figure}
\begin{figure}[H]
    \centering
    \textbf{Diabetes}\par\medskip
    \begin{subfigure}[b]{0.4\textwidth}
        \includegraphics[width=\textwidth]{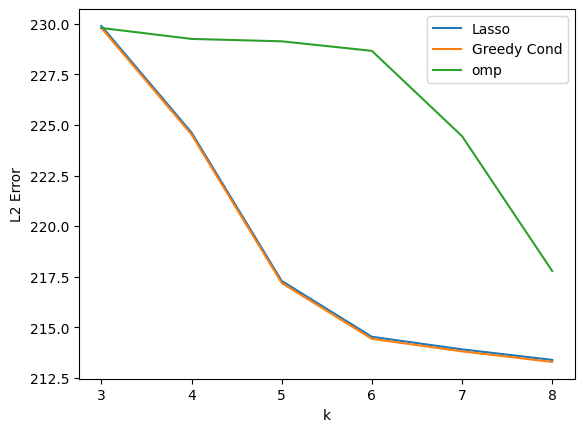}
        \caption{A plot of regression loss against $k$ for 3 methods. The dataset had 10 features. All reported times were under 1 millisecond.}
    \end{subfigure}
\end{figure}
\begin{figure}[H]
    \centering
    \textbf{Wine}\par\medskip
    \begin{subfigure}[b]{0.4\textwidth}
        \includegraphics[width=\textwidth]{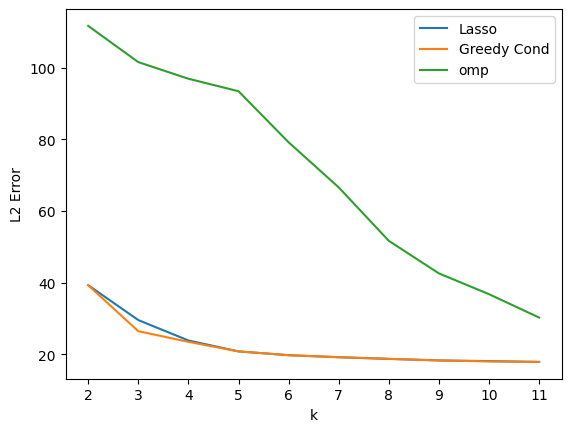}
        \caption{A plot of regression loss against $k$ for 3 methods. The dataset had 13 features. All reported times were under 1 millisecond.}
    \end{subfigure}
\end{figure}

\subsection{Experimental Results for Sparse PCA}
\label{subsec:expPCA}
For our experimental results, we follow the work done in \cite{bertsimas2022solving}, which gives exact values for a number of real world sparse PCA problems.
We will use the optimal solutions found in their paper to benchmark our results.
We consider 5 datasets, which all come from the UC Irvine Machine Learning Repository \cite{Dua:2019}: \emph{Wine}, \emph{Pitprops}, \emph{MiniBooNE}, \emph{Communities}, and \emph{Arrythmia}.

We find that our methods produce answers which achieve close to the optimal possible answer in most cases.

\begin{table}[H]
\begin{center}
    \begin{tabular}{c|c c c c c c}
        Dataset & Columns & $k$ & Found Value & Optimal Value & Gap & Time (s)\\
        \hline
        Wine & 13 & 5 & 3.43 & 3.43 & $<10^{-5}$ & $3\times 10^{-4}$\\
             &    & 10 & 4.45 & 4.59 & $0.03$ & $8\times 10^{-4}$\\
        \hline
        Pitprops & 13 & 5 & 3.40 & 3.40 & $<10^{-5}$ & $3\times 10^{-4}$\\
             &    & 10 & 3.95 & 4.17 & $0.05$ & $8\times 10^{-4}$\\
        \hline
        MiniBooNE & 50 & 5 & 4.99 & 5.00 & $<10^{-5}$ & 0.003\\
             &    & 10 & 9.99 & 9.99 & $<10^{-5}$ & 0.012\\
        \hline
        Communities & 101 & 5 & 4.51 & 4.86 & 0.07 & 0.02 \\
             &    & 10 & 8.71 & 8.82 & $0.013$ & 0.09\\
        \hline
        Arrythmia & 274 & 5 & 4.18 & 4.23 & 0.012 & 0.39\\
         & & 10 & 7.49 & 7.53 & 0.005 & 1.44
    \end{tabular}

\end{center}
\caption{A table describing the results of running an implementation of \Cref{alg:greedy} for sparse PCA on various datasets and values of $k$.  The gap is defined to be $\frac{\text{Optimal Value} - \text{Found Value}}{\text{Optimal Value}}$.}
\end{table}

%%%%%%%%%%%%%%%%%%%%%%%%%%%%%%%%%%%%%%%%%%%%%%%%%%%%%%%%%%%%%%%%%%%%%%%%%%%%%%%%%%%%%%%%%%%%%%
%%%%%%%%%%%%%%%%%%%%%%%%%%%Continuous Formulation proofs%%%%%%%%%%%%%%%%%%%%%%%%%%%%%%%%%%%%%%%%%
%%%%%%%%%%%%%%%%%%%%%%%%%%%%%%%%%%%%%%%%%%%%%%%%%%%%%%%%%%%%%%%%%%%%%%%%%%%%%%%%%%%%%%%%%%%%%%

\section{A Continuous Formulation of \Cref{eq:sparse_qcqp}}
\label{sec:continuous}
Our main goal in this section is to give a proof of \Cref{thm:continuous_formulation}
\begin{proof}%[Proof of \Cref{thm:continuous_formulation}]
    If $p$ is an LPM polynomial supported on $\Delta$ with positive coefficients, then
    \[
        p(DXD) = \sum_{S \in \Delta} \left(\prod_{i \in S} D_{ii}^2\right) a_S \det(X|_S),
    \]
    which is clearly an LPM polynomial with nonnegative coefficients.
    Therefore, by \Cref{thm:root_thm}, we have that for any $D \in \text{Diag}$, $\eta_{p_D}$ is a lower bound on the value of $\mathcal{Q}$.

    It remains to show that for some diagonal matrix $D$, we have $\eta_{p_D}$ equals the value of $\mathcal{Q}$.

    For this, let $S$ be the optimal solution in the formulation of $\mathcal{Q}$ given in \Cref{eq:sparse_qcqp_2}, and let $D$ be the diagonal matrix so that $D_{ii} = 1$ if $i \in S$, and $0$ otherwise.
    Then, we see that
    \[
        p_D(A_1 t - A_0) = a_S\det((A_1t - A_0)|_S),
    \]
    and its maximum root is precisely the value of $t$ when $(A_1t - A_0)|_S$ is PSD and singular.
    That is, this is the minimum of the program
    \begin{equation}
        \begin{aligned}
            \min\quad & y\\
            \st & (A_1y - A_0)|_S \succeq 0.
        \end{aligned}
    \end{equation}
    We see that the dual of this program is precisely
    \begin{equation}
        \begin{aligned}
            \max\quad & \tr(A_0|_SX)\\
            \st & \tr(A_1|_SX) = 1\\
                & X \succeq 0\\
        \end{aligned},
    \end{equation}
    which has precisely the value of our original program by definition of $S$.
    Moreover, strong duality holds in this case because $A_1|_S$ is positive definite, so we are done.

    We conclude that the value of $\mathcal{Q}$ is also $\eta_{p_D}$, which shows the theorem.
\end{proof}

%%%%%%%%%%%%%%%%%%%%%%%%%%%%%%%%%%%%%%%%%%%%%%%%%%%%%%%%%%%%%%%%%%%%%%%%%%%%%%%%%%%%%%%%%%%%%%
%%%%%%%%%%%%%%%%%%%%%%%%%%%greedy conditioning proofs%%%%%%%%%%%%%%%%%%%%%%%%%%%%%%%%%%%%%%%%%
%%%%%%%%%%%%%%%%%%%%%%%%%%%%%%%%%%%%%%%%%%%%%%%%%%%%%%%%%%%%%%%%%%%%%%%%%%%%%%%%%%%%%%%%%%%%%%
\section{Proofs for \Cref{subsec:heuristic} on the Greedy Conditioning Heuristic}
The goal of this section is to show that the greedy conditioning heuristic successfully finds some set $S$ which attains the value $\eta_p$ defined in \Cref{sec:lpm_relaxation}.
To do this, we will require some lemmas.
\begin{lemma}
    \label{lem:increasing}
    Assume that $A_1$ is positive definite.
    For any LPM polynomial $p$, and any $T \subseteq [n]$ so that there is some $S \in \Delta$ so that $T \subsetneq S$, there is some $i \in [n]$ so that
    \[
        \eta_{p|_{T}} \le \eta_{p|_{T + i}}.
    \]
\end{lemma}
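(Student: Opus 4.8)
The plan is to exploit a simple averaging identity relating $p|_T$ to the polynomials $p|_{T+i}$ obtained by adjoining one more index. Since every $S \in \Delta$ has $|S| = k$, each $S$ containing $T$ contains exactly $|S \setminus T| = k - |T|$ indices outside $T$, so summing $p|_{T+i}$ over all $i \in [n] \setminus T$ recounts each term $a_S \det(X|_S)$ exactly $k - |T|$ times. Writing $\ell = k - |T|$, which is at least $1$ precisely because $T \subsetneq S$ for some $S \in \Delta$, this yields the polynomial identity $p|_T = \frac{1}{\ell} \sum_{i \in [n] \setminus T} p|_{T+i}$, where $p|_{T+i}$ is read as the zero polynomial when no $S \in \Delta$ contains $T + i$.

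Next I would record the sign behaviour of the univariate restrictions. For any nonzero LPM polynomial $q$ with nonnegative coefficients supported inside $\Delta$, the map $y \mapsto q(A_1 y - A_0)$ has degree exactly $k$, and its leading coefficient is the sum of $a_S \det(A_1|_S)$ over the sets $S$ occurring in $q$, which is strictly positive because $A_1 \succ 0$ forces every principal submatrix $A_1|_S$ to be positive definite. Consequently $q(A_1 y - A_0) > 0$ for every $y$ strictly larger than its largest real root $\eta_q$ (and for every real $y$ if $\eta_q = -\infty$): the polynomial is positive for large $y$ and cannot change sign past its last real root.

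Then I would argue by contradiction. Assuming $p|_T \not\equiv 0$ (otherwise $\eta_{p|_T} = -\infty$ by convention and there is nothing to prove), set $y_0 = \eta_{p|_T}$, so that $p|_T(A_1 y_0 - A_0) = 0$. Evaluating the averaging identity at $X = A_1 y_0 - A_0$ gives $\sum_{i \in [n] \setminus T} p|_{T+i}(A_1 y_0 - A_0) = 0$. Suppose, toward a contradiction, that $\eta_{p|_{T+i}} < y_0$ for every $i \notin T$. For each $i$ with $p|_{T+i} \not\equiv 0$ the sign fact above forces $p|_{T+i}(A_1 y_0 - A_0) > 0$, and each $i$ with $p|_{T+i} \equiv 0$ contributes nothing; since $p|_T = \frac{1}{\ell}\sum_i p|_{T+i} \not\equiv 0$, at least one summand is a genuine nonzero LPM polynomial, so the total sum is strictly positive, contradicting that it equals $0$. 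Hence some $i$ satisfies $\eta_{p|_{T+i}} \ge y_0 = \eta_{p|_T}$, which is the assertion.

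The only real subtlety, and the step I would treat most carefully, is the degenerate bookkeeping: checking $\ell \ge 1$ under the stated hypothesis, verifying that the positive-leading-coefficient argument needs only nonnegativity of the $a_S$ together with positive definiteness of $A_1$ (not strict positivity of individual coefficients), and confirming that indices $i$ for which $p|_{T+i}$ collapses to the zero polynomial simply drop out of the sum without affecting the contradiction. Everything else is a short sign-chase once the averaging identity is available.
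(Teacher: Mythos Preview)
Your proof is correct and follows essentially the same approach as the paper: both use the averaging identity $(k-|T|)\,p|_T = \sum_{i \notin T} p|_{T+i}$, evaluate at $A_1\eta_{p|_T}-A_0$, and conclude via a sign/IVT argument that some $\eta_{p|_{T+i}}$ must be at least $\eta_{p|_T}$. Your version is in fact a bit more careful than the paper's about the degenerate cases (indices $i$ for which $p|_{T+i}\equiv 0$), but the underlying strategy is identical.
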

\begin{proof}
    We have the following identity:
    \[
        (k - |T|) p|_T(X) = \sum_{i \in [n] \setminus T} p|_{T + i}(X).
    \]
    This can be seen by expanding out both polynomials in terms of minors of $X$ and comparing terms.

    Therefore, we have that 
    \[
        p|_T(A_1 \eta_{p|_T} - A_0) = 0 = \sum_{i \in [n] \setminus T} p|_{T + i}(A_1 \eta_{p|_T} - A_0).
    \]

    This implies that for some $i$, $p|_{T + i}(A_1 \eta_{p|_T} - A_0) \le 0$. Because $A_1$ is positive definite, $\lim_{t \rightarrow \infty}  p|_{T + i}(A_1 t - A_0) = \infty$, so by the intermediate value theorem, for some $t \ge \eta_{p|_T}$, $p|_{T + i}(A_1 t - A_0) = 0$, and therefore,
    \[
        \eta_{p|_{T + i}} \ge t \ge \eta_{p|_T}.
    \]
\end{proof}
To proceed, we will want two definitions and some technical lemmas.
\begin{defi}
    For $T \subseteq [n]$, let the \emph{Schur complement} of the matrix $X$ with respect to $T$ be
    \[
    X \setminus T = X|_{[n] - T} - X_{[n]-T,T} X|_T^{-1} X_{T,[n]-T},
    \]
    Here, $X_{[n]-T,T}$ denotes the nonprincipal submatrix of $X$ whose rows are indexed by $[n] - T$ and whose columns are indexed by $T$.
\end{defi}
\begin{defi}
    For an LPM polynomial $p$, define
    \[
        p_{-T}(X) = \sum_{S \in \Delta : T \subseteq S} a_S\det(X|_{S\setminus T}).
    \]
\end{defi}

\begin{lemma}
    \label{lem:schurCompCond}
    \[
        p|_T(X) = \det(X|_T) p_{-T}(X \setminus T).
    \]
\end{lemma}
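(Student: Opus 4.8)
The plan is to reduce the identity to two standard facts about Schur complements: the determinantal Schur identity, and the compatibility of Schur complements with passing to principal submatrices.

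First I would recall the classical determinant formula: for $T \subseteq S$ with $X|_T$ invertible,
\[
    \det(X|_S) = \det(X|_T)\,\det\bigl((X|_S)\setminus T\bigr),
\]
where $(X|_S)\setminus T$ denotes the Schur complement of the block $X|_T$ taken \emph{inside} the matrix $X|_S$. The key step is then to observe that this inner Schur complement is exactly the restriction of the ambient one, i.e.
\[
    (X|_S)\setminus T = (X\setminus T)\big|_{S\setminus T}.
\]
This follows immediately from the entrywise description of the Schur complement: for $i,j \in [n]\setminus T$ the $(i,j)$ entry of $X\setminus T$ equals $X_{ij} - X_{i,T}\,(X|_T)^{-1}\,X_{T,j}$, an expression depending only on $i$, $j$, and $T$ and not on the ambient index set; restricting the row/column indices to $S\setminus T$ therefore reproduces precisely the Schur complement computed within $X|_S$.

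With these in hand the proof is a substitution. Using the definition of $p|_T$ and the two facts above,
\[
    p|_T(X) = \sum_{T\subseteq S\in\Delta} a_S\det(X|_S)
    = \sum_{T\subseteq S\in\Delta} a_S\,\det(X|_T)\,\det\bigl((X\setminus T)|_{S\setminus T}\bigr)
    = \det(X|_T)\sum_{T\subseteq S\in\Delta} a_S\,\det\bigl((X\setminus T)|_{S\setminus T}\bigr),
\]
and the remaining sum is by definition $p_{-T}(X\setminus T)$. Finally, I would note that the computation is valid on the dense open set where $X|_T$ is invertible (so that $X\setminus T$ is defined); since both sides are rational functions of the entries of $X$ that agree on a dense set, the identity holds wherever it is defined.

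I do not expect a serious obstacle here: the only points requiring care are the block bookkeeping in verifying $(X|_S)\setminus T = (X\setminus T)|_{S\setminus T}$ — which is the crux, though it is a one-line check from the entry formula — and the harmless genericity caveat about invertibility of $X|_T$.
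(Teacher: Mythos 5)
Your proof is correct and follows essentially the same route as the paper's: both rest on the Schur determinant identity $\det(X|_S) = \det(X|_T)\det\bigl((X|_S)\setminus T\bigr)$ together with the compatibility $(X|_S)\setminus T = (X\setminus T)|_{S\setminus T}$, and then substitute term by term into the definition of $p|_T$. The extra justification you give for the compatibility fact via the entrywise formula, and the genericity remark about invertibility of $X|_T$, are sound additions but do not change the argument.
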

\begin{proof}
    To prove this, we will need to recall the Schur complement determinant identity.
    \[
    \det(X|_T) \det(X \setminus T) = \det(X).
    \]

    We also crucially have the property that Schur complements commute with taking submatrices: if $T \subseteq S$, then
    \[
        (X \setminus T)|_{S \setminus T} = (X|_S)\setminus T.
    \]

    From this, we see that 
    \begin{align*} 
    \det(X|_T) p_{-T}(X \setminus T)&=\sum_{S \in \Delta : T \subseteq S} a_S \det(X|_T)\det((X \setminus T)|_{S\setminus T})\\
            &=\sum_{S \in \Delta : T \subseteq S} a_S \det(X|_S)\\
            &=p|_T(X),
    \end{align*}
    as desired.

\end{proof}
\begin{lemma}
    \label{lem:new_oracle}
    Fix some LPM polynomial $p$ of degree $k$.
    Suppose that we have an oracle that can evaluate $p$ at any symmetric matrix $X$ in exact arithmetic.
    Then we can compute the value of $p|_T$ for any matrix $X$ using at most $(k+1)$ calls to the oracle and $O(n^{\omega})$ additional arithmetic operations, where $\omega$ is the matrix multiplication constant.
\end{lemma}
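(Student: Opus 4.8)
The plan is to use the identity from Lemma \ref{lem:schurCompCond}, namely $p|_T(X) = \det(X|_T)\, p_{-T}(X \setminus T)$, to reduce evaluation of $p|_T$ to evaluation of the ambient polynomial $p$ — but at a carefully chosen family of matrices rather than at $X$ itself. The issue is that $p_{-T}$ is not literally the same polynomial as $p$; it is a sum over $S \supseteq T$ of minors indexed by $S \setminus T$, so it is a polynomial in an $(n-|T|) \times (n-|T|)$ matrix of degree $k - |T|$. So I first need a way to read off $p_{-T}(Y)$, for $Y$ a matrix on the index set $[n] \setminus T$, using calls to the original oracle for $p$.

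The key trick I would use is a dilation/scaling argument on the $T$-block. Consider, for a scalar parameter $s$, the matrix $Y \oplus sI_T$ obtained by placing $Y$ on the $[n]\setminus T$ block, $s$ times the identity on the $T$ block, and zeros off-diagonal between the two blocks. Then $p$ evaluated at this matrix is a polynomial in $s$: expanding $p(Y \oplus sI_T) = \sum_{S \in \Delta} a_S \det((Y \oplus sI_T)|_S)$ and noting that $\det((Y \oplus sI_T)|_S) = s^{|S \cap T|}\det(Y|_{S \setminus T})$ when $S \cap ([n]\setminus T) = S \setminus T$, the top-degree coefficient in $s$ (i.e. the coefficient of $s^{|T|}$) is exactly $\sum_{S \supseteq T} a_S \det(Y|_{S \setminus T}) = p_{-T}(Y)$, since a set $S$ contributes the maximal power $s^{|T|}$ precisely when $T \subseteq S$. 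Because $p$ has degree $k$ and $|T| \le k$, the polynomial $s \mapsto p(Y \oplus sI_T)$ has degree at most $k$; evaluating it at $k+1$ distinct values of $s$ and interpolating recovers the coefficient of $s^{|T|}$, hence $p_{-T}(Y)$. That accounts for the ``$(k+1)$ calls to the oracle'' in the statement.

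Putting it together: to compute $p|_T(X)$ for a given $X$, I would first form the Schur complement $Y = X \setminus T = X|_{[n]-T} - X_{[n]-T,T}X|_T^{-1}X_{T,[n]-T}$, which costs one inverse and a couple of matrix products, i.e. $O(n^\omega)$ arithmetic operations; compute $\det(X|_T)$, also within $O(n^\omega)$; then make the $k+1$ oracle calls to $p$ at the matrices $Y \oplus s_j I_T$ for distinct scalars $s_0,\dots,s_k$ and Lagrange-interpolate to extract $p_{-T}(Y)$ (the interpolation is $O(k^2)$ arithmetic, absorbed into $O(n^\omega)$ since $k \le n$); finally multiply to get $\det(X|_T)\, p_{-T}(Y) = p|_T(X)$. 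The main thing to be careful about — and the step I'd expect to need the most attention — is the bookkeeping in the $s$-expansion: verifying that the submatrix of the block-diagonal matrix $Y \oplus sI_T$ indexed by an arbitrary $S \in \Delta$ really does factor as $s^{|S\cap T|}\det(Y|_{S\setminus T})$ (this uses that the matrix is block diagonal with respect to the partition $T \sqcup ([n]\setminus T)$, so its principal submatrices are themselves block diagonal), and confirming that the maximal exponent of $s$ appearing is $|T|$ with coefficient exactly $p_{-T}(Y)$. One should also note $\det(X|_T) \ne 0$ is not needed for this computation since we never divide by it — we only need $X|_T$ invertible to form the Schur complement, and if it is singular one can instead evaluate $p|_T$ directly via a limiting/perturbation argument or by the same dilation trick applied to the whole of $X$.
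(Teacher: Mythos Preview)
Your proposal is correct and takes essentially the same approach as the paper. Both arguments reduce to Lemma~\ref{lem:schurCompCond} and then recover $p_{-T}(Y)$ by evaluating $p$ at $k+1$ matrices of the form $Y \oplus s_j I_T$ and interpolating; the paper phrases the extraction of the $s^{|T|}$ coefficient as a $|T|$-th directional derivative in the direction of the diagonal matrix $1_T$, while you phrase it via the block-diagonal factorization $\det((Y\oplus sI_T)|_S)=s^{|S\cap T|}\det(Y|_{S\setminus T})$, but these are the same computation.
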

\begin{proof}
    We refer to \Cref{lem:schurCompCond}.
    We can compute both $\det(X|_T)$ and $X \setminus T$ using at most $O(n^{\omega})$ arithmetic operations, so it remains to compute $p_{-T}(X)$ using at most $k+1$ oracle calls. To do this, notice that
    \[
        \frac{\partial }{\partial X_{ii}} \det(X|_S) = \begin{cases} 0 \text{ if }i \not \in S\\ \det(X|_{S - i}) \text{ if } i \in S \end{cases}.
    \]
    From this, and extending by linearity, we get that
    \[
    p_{-T}(X) =  (\sum_{i \in T} \frac{\partial}{\partial X_{ii}})^{|T|} p|_T(X) = D_{1_{T}}^{|T|} p|_T(X).
    \]
    Here, $D_{1_{T}}$ denotes the directional derivative with respect to the diagonal matrix $1_T$ whose $i^{th}$ diagonal entry is 1 if $i \in T$ and 0 otherwise.
    We now apply an alternative characterization of the directional derivative to obtain that
    \[
    D_{1_T}^{|T|} p|_T(X) = \frac{1}{|T|!}\frac{d}{dt}^{|T|}p(X + t 1_T)|_{t = 0}
    \]
    Notice that $p(X + t 1_T)$ is a univariate polynomial of degree at most $k$, and therefore, it can be specified by its $k+1$ coefficients.
    Using our oracle, we can compute this univariate polynomial in $k+1$ distinct points.
    Using these evaluations, and we can then apply polynomial interpolation using at most $O(k^{\omega})$ additional arithmetic operations.
    Once we have the $k+1$ coefficients of $p(X+t1_T)$, we can $|T|^{th}$ derivative at 0 by just taking its $|T|^{th}$ coefficient.
\end{proof}
We come to the proof of our main theorem.
\begin{proof}[Proof of \Cref{thm:greedy_works}]
    We use \Cref{lem:new_oracle} to produce an oracle for $p|_T(X)$.
    Then, we can compute the coefficients of the polynomial $g(t) = p|_T(A_1 t - A_0)$ evaluating this polynomial at $k+1$ locations using our oracle and polynomial interpolation.
    Once we have this, we can compute $\eta|_{p|_T}$ to arbitrary accuracy using polynomial root finding techniques \cite{pinkert1976exact,ben1988fast}.

    It is clear from \Cref{lem:increasing} that at round $t$ of the algorithm,
    \[
        \eta_{p|_T} \le \max_{j \in [n] \setminus T} \eta_{p|_{T+j}}.
    \]
    In particular, we have that $\eta_{p|_T}$ increases in every round of the algorithm, and in particular, it is always at least $\eta_p$, as desired.
\end{proof}

We will now consider a more detailed analysis for the characteristic coefficients in the next section.

%%%%%%%%%%%%%%%%%%%%%%%%%%%%%%%%%%%%%%%%%%%%%%%%%%%%%%%%%%%%%%%%%%%%%%%%%%%%%%%%%%%%%%%%%%%%%%
%%%%%%%%%%%%%%%%%%%%%%%%%%%Characteristic Coeffs%%%%%%%%%%%%%%%%%%%%%%%%%%%%%%%%%%%%%%%%%
%%%%%%%%%%%%%%%%%%%%%%%%%%%%%%%%%%%%%%%%%%%%%%%%%%%%%%%%%%%%%%%%%%%%%%%%%%%%%%%%%%%%%%%%%%%%%%
\section{Characteristic Coefficients}
\label{sec:char_poly}
For this section, we recall that 
\[
    c_n^k(X) = \sum_{S \subseteq [n] : |S| = k} \det(X|_S).
\]
We will abbreviate $c_n^k$ by $p$ when convenient in this section.
The core idea that we will exploit to accelerate our implementation of \Cref{alg:greedy} is that while we must compute the characteristic coefficients of a large number of matrices in the course of the algorithm, these matrices are in fact closely related to each other.
In particular, we will mostly only need to compute the characteristic coefficients of matrices that differ from an earlier matrix by a rank 1 matrix.
It is often the case in numerical linear algebra that it is possible to update the values of a computation after a rank 1 update than it is to recompute the relevant values directly.

We will also refer to diagonalizations of matrices, that is a decomposition of a symmetric matrix $X$ as

\[
    X = Q\Lambda Q^{\intercal},
\]
where $Q$ is the orthogonal matrix whose columns are eigenvectors of $X$ and $\Lambda$ is the diagonal matrix of eigenvalues of $X$.
We will also discuss maintaining a diagonalization of a matrix under rank 1 updates.
While it is not possible to exactly compute a diagonalization of a matrix in rational arithmetic, we will ignore this point here, and assume that errors in eigenvalue computations when using floating point arithmetic are negiglible.
This is the typical assumption used in numerical linear algebra.
Given this assumption, it is well known that a diagonalization of a matrix can be computed in $O(n^3)$ floating point operations \cite{demmel1997applied}.

The ingredients for our technique will be the following: the idea of maintaining a diagonalization of a matrix using rank 1 updates; a fast method for computing $p|_i(X)$ for \emph{all} $i$ of $X$ simultaneously given a diagonalization of $X$, and Newton's method for root finding.

Firstly, it is known that if $X = Q\Lambda Q^{\intercal}$ is the diagonalization of $X$, then it is possible to compute a diagonalization of $X +\rho vv^{\intercal}$, where $v \in\R^n$ in $O(n^{\omega})$ floating point operations \cite{demmel1997applied,bunch1978rank}. 
Precisely,
\begin{lemma}
    Let $X = Q\Lambda Q^{\intercal}$ be an $n\times n$ symmetric matrix and a given diagonalization. Let $v \in \R^n$. Then there is an algorithm, \emph{UpdateDiagonalization}, to find $Q'$ and $\Lambda'$ so that 
    \[
        X + \rho vv^{\intercal} = Q'(\Lambda')(Q')^{\intercal}
    \]
    in $O(n^{\omega})$ floating point operations.
\end{lemma}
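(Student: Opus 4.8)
The plan is to reduce the rank-one update of the full symmetric eigenproblem to the classical \emph{diagonal-plus-rank-one} eigenproblem and then invoke the standard secular-equation machinery for the latter, keeping careful track of which single step is responsible for the $O(n^{\omega})$ (rather than $O(n^2)$) cost.

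First I would set $w = Q^{\intercal} v$, computable in $O(n^2)$ operations, and observe that
\[
    X + \rho v v^{\intercal} = Q\left(\Lambda + \rho w w^{\intercal}\right)Q^{\intercal}.
\]
Hence it suffices to produce a diagonalization $\Lambda + \rho w w^{\intercal} = \tilde Q \Lambda' \tilde Q^{\intercal}$ with $\tilde Q$ orthogonal and $\Lambda'$ diagonal; then $Q' = Q\tilde Q$ and $\Lambda'$ are the desired outputs. Everything below will be $O(n^2)$ except the final matrix product $Q\tilde Q$, which costs $O(n^{\omega})$ and therefore dominates; since $\omega \ge 2$ this gives the claimed bound.

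For the diagonal-plus-rank-one problem with $\Lambda = \diag(d_1,\dots,d_n)$, I would first run a deflation step: coordinates $i$ with $w_i = 0$ are already eigenpairs and can be removed, and a pair of coordinates with $d_i = d_j$ can be collapsed by a single Givens rotation (which also updates $\tilde Q$); all deflations together cost $O(n^2)$. After deflation one may assume the $d_i$ are distinct and all $w_i \neq 0$, and it is classical that the eigenvalues of $\Lambda + \rho w w^{\intercal}$ are exactly the $n$ real roots of the secular equation
\[
    f(\mu) = 1 + \rho \sum_{i=1}^{n} \frac{w_i^2}{d_i - \mu} = 0,
\]
which strictly interlace the $d_i$ (the interlacing direction depending on the sign of $\rho$). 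On each interval between consecutive poles $f$ is monotone with one-signed curvature, so a specialized rational-interpolation Newton iteration converges to each root in $O(1)$ iterations to the required precision; each iteration evaluates $f$ in $O(n)$ time, so all $n$ eigenvalues — the diagonal of $\Lambda'$ — are found in $O(n^2)$ operations. For each eigenvalue $\mu_j$ the corresponding unit eigenvector is the normalization of the vector with $i$-th entry $w_i/(d_i - \mu_j)$, so assembling $\tilde Q$ costs another $O(n^2)$. Composing with the deflation rotations and multiplying by $Q$ completes the construction.

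The main point to be careful about is not the arithmetic-operation count, which is transparently dominated by the single product $Q\tilde Q$, but the numerical subtleties: the naive eigenvector formula $w_i/(d_i-\mu_j)$ must be replaced by the Gu--Eisenstat correction to guarantee orthogonality of $\tilde Q$ to working precision, and the deflation tolerances must be set consistently with the achieved root accuracy. As the surrounding text explicitly assumes that floating-point eigenvalue errors are negligible, for this lemma I would simply cite \cite{demmel1997applied,bunch1978rank} for these stability details and record that, under that standing assumption, the entire procedure runs in $O(n^2) + O(n^{\omega}) = O(n^{\omega})$ floating point operations.
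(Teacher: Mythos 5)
Your proposal is correct and follows the same route the paper alludes to: the paper does not actually prove this lemma but cites \cite{demmel1997applied,bunch1978rank} and describes the method as ``finding roots of the characteristic polynomial of $X + vv^{\intercal}$ using a variant of Newton's method,'' which is precisely the secular-equation / Bunch--Nielsen--Sorensen argument you give. Your accounting that the secular-equation phase and eigenvector assembly are $O(n^2)$ while the single matrix product $Q\tilde Q$ supplies the $O(n^{\omega})$ bound is the right way to read the cost claim, and your caveat about the Gu--Eisenstat correction for eigenvector orthogonality correctly identifies the one numerical point that the paper's standing ``negligible floating-point error'' assumption is meant to absorb.
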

This technique, which is essentially finds roots of the characteristic polynomial of $X + vv^{\intercal}$ using a variant of Newton's method, is the basis of the divide and conquer method for diagonalizing tridiagonal matrices \cite{cuppen1980divide}.

Secondly, we will show the following theorem:
\begin{theorem}
    \label{thm:fastcomp}
    Suppose that $X = Q\Lambda Q^{\intercal}$ is a symmetric matrix with a given diagonalization. Then, there is an algorithm, \emph{Conditionals}, which compute the vector $\vec{v}$, where $\vec{v}_i = p|_{i}(X)$ for each $i$ using $O(n^{\omega})$ floating point operations.
\end{theorem}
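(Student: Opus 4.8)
The plan is to use the formula $p|_i(X) = \frac{\partial}{\partial X_{ii}} \big(p|_{\varnothing}(X)\big)$ — wait, that's not quite right since $p|_i$ sums over $S \ni i$ but $p|_\varnothing = c_n^k$ sums over all $S$. Let me think about what the right identity is.

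We have $c_n^k(X) = \sum_{|S|=k} \det(X|_S)$, and $p|_i(X) = \sum_{S \ni i, |S|=k} \det(X|_S)$. By the derivative identity from Lemma \ref{lem:new_oracle}, $\frac{\partial}{\partial X_{ii}} \det(X|_S) = \det(X|_{S-i})$ if $i \in S$ and $0$ otherwise. So $\frac{\partial}{\partial X_{ii}} c_n^k(X) = \sum_{S \ni i} \det(X|_{S-i}) = \sum_{S' \not\ni i, |S'|=k-1} \det(X|_{S'})$, which is $c_{n-1}^{k-1}$ of $X$ with row/column $i$ deleted — not directly $p|_i$. So the plan should be: $p|_i(X) = \det(X_{ii}) \cdot (\text{something})$... actually $p|_i(X) = X_{ii} \cdot \frac{\partial}{\partial X_{ii}} c_n^k(X)$ by Euler-type reasoning? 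No — $\det(X|_S)$ for $S \ni i$ is linear in $X_{ii}$, so $\det(X|_S) = X_{ii} \det(X|_{S-i}) + (\text{terms without } X_{ii})$; that doesn't isolate $p|_i$ either.

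I would instead proceed via the polynomial $p(X + tI) = c_n^k(X + tI)$, whose coefficients are the elementary symmetric polynomials $e_{k-j}(\lambda)$ in the eigenvalues $\lambda$ of $X$ — computable in $O(n\log^2 n)$ or certainly $O(n^\omega)$ time from $\Lambda$. The key structural fact to establish is a formula expressing $p|_i(X)$ in terms of quantities that, across all $i$, can be assembled with a single batch of $O(n^\omega)$ work given $Q$ and $\Lambda$. The natural candidate: using Lemma \ref{lem:schurCompCond} with $T = \{i\}$, $p|_i(X) = X_{ii}\, p_{-i}(X \setminus i)$, where $X \setminus i$ is the Schur complement (a rank-one-updated $(n-1)\times(n-1)$ matrix: $X|_{[n]-i} - \frac{1}{X_{ii}} X_{[n]-i,i}X_{i,[n]-i}$), and $p_{-i}$ is $c_{n-1}^{k-1}$. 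Then $p_{-i}(X\setminus i) = e_{k-1}$ of the eigenvalues of $X \setminus i$. So the task reduces to: given the diagonalization of $X$, compute $e_{k-1}(\text{spec}(X \setminus i))$ for all $i$ simultaneously in $O(n^\omega)$.

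The main obstacle — and the heart of the proof — is that step: each $X \setminus i$ is a different matrix, and naively re-diagonalizing each costs $O(n^3)$, giving $O(n^4)$ total. I expect the resolution to use the observation that $e_{k-1}(\text{spec}(M))$ for a symmetric $M$ equals the coefficient of $t^{n-k}$ in $\det(M + tI)$, and that $\det(X\setminus i + tI)$ relates to $\det(X + tI)$ and its adjugate via the matrix-determinant lemma: since $X\setminus i$ is a principal submatrix after a rank-one correction, one can write $\det((X+tI)\setminus i) $ in terms of $\det(X+tI)$ and entries of $(X+tI)^{-1}$, all of which are rational functions of $t$ whose numerators have controlled degree and whose values are cheap to extract from the diagonalization $Q(\Lambda + tI)^{-1}Q^\intercal$. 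Concretely I would: (1) compute the coefficient vector of $f(t) := c_n^k(X+tI)$ and more generally the "characteristic-coefficient profile" $j \mapsto c_n^j(X+tI)$ as polynomials in $t$ from $\Lambda$ in $O(n^\omega)$; (2) show $p|_i(X)$ is a fixed linear functional (independent of $i$, depending only on $k$) applied to the pair $\big(X_{ii},\ \text{row } i \text{ of } Q\Lambda^mQ^\intercal \text{ for } m=0,\dots,k\big)$ — i.e. express $p|_i(X)$ through the power sums / moments $(X^m)_{ii}$, which for all $i$ and all $m \le k$ form the diagonals of $Q\Lambda^m Q^\intercal$ and can be batch-computed in $O(kn^\omega)$, or $O(n^\omega)$ after folding $k$ into the exponent of $n$; (3) combine via Newton's identities to convert moments to elementary symmetric functions and conclude. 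I would organize the write-up as: first the Schur-complement reduction (citing Lemma \ref{lem:schurCompCond}), then a lemma giving the batch computation of all needed diagonal moments from a diagonalization, then the arithmetic-count bookkeeping. The delicate point to get right is confirming that only $O(1)$-many (in fact $\le k+1$) moment-matrices $Q\Lambda^m Q^\intercal$ are needed and that forming each costs $O(n^\omega)$, so the total is $O(n^\omega)$ as claimed — I would double-check whether the paper intends $k$ absorbed into the constant or whether the honest bound carries a factor of $k$, and state it accordingly.
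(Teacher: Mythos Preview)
Your Schur-complement reduction $p|_i(X)=X_{ii}\,c_{n-1}^{k-1}(X\setminus i)$ is exactly how the paper begins, and the observation that $X\setminus i$ is a rank-one update of $X$ is the right starting point. From there, however, the paper takes a sharper route than your moment-based plan.

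The paper's key step is that because the perturbation $\tfrac{1}{X_{ii}}X_iX_i^{\intercal}$ has rank one and $c_n^{k-1}$ vanishes to order $k-2$ on rank-one matrices, the first-order Taylor expansion of $c_n^{k-1}\bigl(\Lambda-\tfrac{1}{X_{ii}}Q^{\intercal}X_iX_i^{\intercal}Q\bigr)$ is \emph{exact}. This gives at once the closed formula
\[
p|_i(X)=X_{ii}\,c_n^{k-1}(\Lambda)-X_i^{\intercal}Q\,\nabla c_n^{k-1}(\Lambda)\,Q^{\intercal}X_i,
\]
and since $\nabla c_n^{k-1}(\Lambda)$ is diagonal with entries $e_{n-1}^{k-2}(\lambda_{\hat j})$, the whole vector collapses to $c_n^{k-1}(\Lambda)\diag(X)-Q^{*2}\diag\bigl(\Lambda^2\nabla c_n^{k-1}(\Lambda)\bigr)$, a single matrix--vector product once that diagonal is known. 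The remaining work is computing all the leave-one-out values $e_{n-1}^{k-2}(\lambda_{\hat j})$ simultaneously; the paper does this with a tree-structured dynamic program in $O(kn\log n)$ time, yielding $O(n^{\omega}+kn\log n)$ overall.

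Your moment route---expressing $p|_i(X)$ through the diagonal moments $(X^m)_{ii}$ for $m\le k$ and converting via Newton's identities---is a legitimate alternative: expanding $e_{n-1}^{k-2}(\lambda_{\hat j})=\sum_{m=0}^{k-2}(-\lambda_j)^m e_n^{k-2-m}(\lambda)$ does make $p|_i(X)$ a linear combination of the $(X^m)_{ii}$ with coefficients that are global symmetric functions of $\lambda$. But you have not actually derived that formula, only asserted that one should ``show'' it, and carrying it through costs $O(kn^2)$ (you need only the diagonals of $Q\Lambda^mQ^{\intercal}$, so each is $O(n^2)$, not $O(n^{\omega})$ as you wrote). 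The exact-Taylor observation is precisely what lets the paper avoid the extra factor of $k$ in the dominant term and land on the stated $O(n^{\omega})$ bound; your plan as written would honestly carry a $k$, as you suspected.
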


Finally, we will note the following, which is essentially a statement about the number of iterations required for Newton's method to terminate.
\begin{lemma}
    \label{lem:newton}
    Let $g(t)$ be a univariate polynomial with only real roots, and let $r$ be the largest root of $g(t)$. Let $t_0$ be a computable upper bound on the maximum root of $g(t)$, then
    there is an algorithm \emph{MaxRoot} that we can compute $s$ satisfying $r \le s \le r + \epsilon$ in a number of arithmetic operations which is at most
    \[
        O(k^2\log(\frac{t_0-r}{\epsilon}))
    \]
\end{lemma}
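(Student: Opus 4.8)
The plan is to analyze the standard Newton iteration $t_{m+1} = t_m - g(t_m)/g'(t_m)$ started at $t_0$, exploiting the hypothesis that $g$ has only real roots to get monotone convergence with an explicit geometric rate. Write $g(t) = c\prod_i (t - r_i)$ over its real roots (at most $k$ of them, counted with multiplicity), ordered $r = r_1 \ge r_2 \ge \cdots$. For $t > r$ the logarithmic derivative $\sigma(t) := g'(t)/g(t) = \sum_i (t - r_i)^{-1}$ is a sum of positive terms, so $g'(t) \neq 0$ there, the Newton step is well defined, and $g(t)/g'(t) = 1/\sigma(t) > 0$; thus the iterates strictly decrease as long as they remain above $r$.

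The two inequalities that drive everything are $(t - r)^{-1} \le \sigma(t) \le k\,(t - r)^{-1}$, both immediate from $0 < t - r \le t - r_i$ for every $i$ together with there being at most $k$ terms. The lower bound gives $1/\sigma(t_m) \le t_m - r$, so $t_{m+1} = t_m - 1/\sigma(t_m) \ge r$; by induction every iterate lies in $[r, t_0]$ and $g'$ never vanishes along the run. The upper bound gives $1/\sigma(t_m) \ge (t_m - r)/k$, hence $t_{m+1} - r \le (1 - 1/k)(t_m - r)$, and iterating, $t_m - r \le (1 - 1/k)^m(t_0 - r)$. Consequently $m = O\!\left(k\log\frac{t_0 - r}{\epsilon}\right)$ iterations suffice to reach $r \le t_m \le r + \epsilon$, the required output $s$. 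Each iteration evaluates $g$ and $g'$ at one point by Horner's scheme in $O(k)$ arithmetic operations, so the total cost is $O(k)\cdot O\!\left(k\log\frac{t_0 - r}{\epsilon}\right) = O\!\left(k^2\log\frac{t_0 - r}{\epsilon}\right)$.

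The one point needing a little care is termination, since the statement's bound references the unknown $r$. One option is to run a predetermined $\lceil k\log\frac{t_0-r}{\epsilon}\rceil$ iterations, which matches the claimed count exactly (the bound in the statement is likewise phrased in terms of $r$). A fully $r$-free alternative is to halt once $k\,(t_m - t_{m+1}) \le \epsilon$: since $t_m - t_{m+1} = 1/\sigma(t_m) \in [(t_m-r)/k,\ t_m - r]$, this both certifies $t_m - r \le \epsilon$ and fires within $O\!\left(k\log\frac{k(t_0-r)}{\epsilon}\right)$ iterations, i.e. with only an $O(k\log k)$ additive overhead. Boundary cases — $t_0 = r$, or $g'(t_0) = 0$ because $r$ is a multiple root — are handled by returning $t_0$ immediately. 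I do not expect a genuine obstacle here: the content is precisely the pair of bounds on $\sigma$, out of which the contraction factor $1 - 1/k$ (hence the factor $k$ in the iteration count) for real-rooted $g$ falls straight.
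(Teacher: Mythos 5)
Your proof takes essentially the same route as the paper's: Newton's method from $t_0$, with the contraction factor $1-1/k$ extracted from the inequality $g(t)/g'(t)\ge (t-r)/k$ (the paper phrases this after shifting the largest root to $0$, you work directly in $t-r$, but these are the same bound). Your lower bound $g(t)/g'(t)\le t-r$ to show the iterates stay above $r$ replaces the paper's appeal to convexity of $g$ on $(r,\infty)$, and your remarks on an $r$-free stopping rule are a useful addendum, but the core argument is identical.
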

For our purposes, we will assume that we are working in finite precision, and that for all of our applications, the numerical factor $\log(\frac{t_0-r}{\epsilon})$ in the previous discussion is in fact a constant. We will also note that if $g(t) = p|_T(tA_1 - A_0)$ and $A_1$ is PSD, then $g(t)$ is real rooted, as can be seen from the work done in \cite{blekherman2021linear}, so that we can apply this fact throughout.

We will also need the following two additional methods: a method \emph{Interpolate} for interpolating a univariate polynomial from $\ell$ evaluations of that polynomial in $O(\ell^2)$ operations, and a method \emph{SchurComplement($X$, $j$)}, which computes $X \setminus \{j\} = X - \frac{1}{X_{jj}} X_j X_j^{\intercal}$, which is clearly possible in $O(n^2)$ time.
\begin{remark}
    We will be somewhat loose about whether $X \setminus \{j\}$ is the rank one update of $X$ given by $X - \frac{1}{X_{jj}} X_j X_j^{\intercal}$, or the submatrix of this matrix obtained by deleting the $j^{th}$ row and column as above.
    The reason for this is that the characteristic coefficients of both matrices of the same degree are the same.
\end{remark}

Given these methods, we can state our faster heuristic algorithm in \Cref{alg:faster} when $A_1$ is PSD.
The main idea of \Cref{alg:faster} is to maintain a set of evaluation matrices, the $X^{(i)} = A_1t_i - A_0$, where we will evaluate our conditional polynomials.
The precise values of these evaluation matrices are theoretically not so important, so long as there are sufficiently many $t_i$'s, so that we can use these evaluations to interpolate the univariate polynomial $p|_T(A_1t-A_0)$.
In practice, it is useful to set the $t_i$'s to be the Chebyshev nodes \cite{berrut2004barycentric}, to avoid numerical instability.
As long as we maintain the diagonalizations of all of the $X^{(i)}$, we can evaluate all of the relevant polynomials at $X^{(i)}$ quickly.
In each round, we update the $X^{(i)}$ by taking their Schur complements with respect to the chosen column.
\begin{algorithm}[H]
    \caption{The Characteristic Method}
    \label{alg:faster}
    \begin{algorithmic}
        \Procedure{CharacteristicRoots}{$A_0$, $A_1$, $k$}
        \State Initialize distinct $t_1 ,\dots, t_{\ell} \in \R$ for evaluation.
        \State $T = \varnothing$
        \For{$i = 1 \dots \ell$}
            \State $X^{(i)} =  A_1t_i-A_0$
            \State $Q^{(i)}, \Lambda^{(i)} =$ Diagonalize$(X^{(i)})$
            \State detT$_i$ = 1
        \EndFor

        \For{$t = 1 \dots k$}
            \For{$i = 1 \dots \ell$}
            \State v$^{(i)}$ = Conditionals$(X^{(i)}, \Lambda^{(i)}, Q^{(i)})$.
            \EndFor
            \For{$j = 1 \dots n$}
            \State $g$ = Interpolate(detT$_1\cdot$v$^{(1)}_j$ ,\dots, detT$_{\ell}\cdot$v$^{(\ell)}_j$)
                \State $\eta_j = $MaxRoot($g$)
            \EndFor
            \State $j=\argmax \eta_j$.
            \State Add $j$ to $T$.
            \For{$i = 1 \dots \ell$}
                \State $Q^{(i)}, \Lambda^{(i)} =$ UpdateDiagonalization$(X^{(i)}, Q^{(i)}, \Lambda^{(i)}, X^{(i)}_j)$
                \State detT$_i$ = $X^{(i)}_{jj}$.
                \State $X^{(i)}$ = SchurComplement($X^{(i)}$, j)
            \EndFor
        \EndFor
        \Return $T$
        \EndProcedure
    \end{algorithmic}
\end{algorithm}
Here, we note if $A_0$ or $A_1$ has low rank, then in fact, it is possible to recover the polynomial $p(tA_0 - A_1)$ from fewer than $k$ evaluations.
For example, for sparse regression problems, we have seen that it is in fact possible to interpolate $p(tA_0 - A_1)$ using only 2 evaluations.
Hence, it is possible in some cases to implement this so that $\ell < k$.

Once we have this, it is not hard to compute the overall runtime of this algorithm:
\begin{theorem}
    The total runtime of \Cref{alg:faster} is $O(\ell n^3 + k\ell n^{\omega})$ operations. If $k = o(n^{3-\omega})$, then this is $O(\ell n^3)$ time.
\end{theorem}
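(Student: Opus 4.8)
The plan is a direct, line-by-line accounting of the operations performed by \Cref{alg:faster}, pulling the cost of each subroutine from the facts already assembled in this section and then simplifying the resulting sum. First I would charge the initialization block: forming each evaluation matrix $X^{(i)} = A_1 t_i - A_0$ costs $O(n^2)$, and computing its diagonalization costs $O(n^3)$ by the standard numerical linear algebra fact quoted above; doing this for $i = 1, \dots, \ell$ costs $O(\ell n^3)$ in total, and this will turn out to be one of the two dominant terms.

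Next I would bound the cost of one iteration of the outer loop (one value of $t$). It makes $\ell$ calls to \emph{Conditionals}, each $O(n^{\omega})$ by \Cref{thm:fastcomp}; $n$ calls to \emph{Interpolate}, each on $\ell$ sample values and hence $O(\ell^2)$; $n$ calls to \emph{MaxRoot}; one $\argmax$ over $n$ numbers; and then $\ell$ calls each to \emph{UpdateDiagonalization} ($O(n^{\omega})$ apiece), the detT update ($O(1)$), and \emph{SchurComplement} ($O(n^2)$). For the \emph{MaxRoot} cost the point is that \emph{Interpolate} returns a polynomial of degree at most $\ell - 1$ — and, whenever enough nodes are used for the algorithm to be correct, this interpolant is exactly $p|_{T+j}(A_1 t - A_0)$, which is real rooted since $A_1$ is PSD by the results of \cite{blekherman2021linear} — so \Cref{lem:newton} applied in degree $\le \ell$, together with the standing assumption that the accuracy factor $\log((t_0 - r)/\epsilon)$ is $O(1)$, gives $O(\ell^2)$ per call. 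Summing, one outer iteration costs $O(\ell n^{\omega} + n \ell^2)$, and over the $k$ iterations of the loop this is $O(k\ell n^{\omega} + k n \ell^2)$.

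Combining with the initialization gives a total of $O(\ell n^3 + k \ell n^{\omega} + k n \ell^2)$, and it remains to absorb the last term. Since every set in $\Delta$ has size $k$ and lies in $[n]$ we have $k \le n$, and since $\ell$ never needs to exceed $k + 1$ (it is only an interpolation-degree bound) we have $\ell \le n + 1$; as $\omega \ge 2$ this yields $\ell = O(n^{\omega - 1})$, whence $k n \ell^2 = O(k \ell n^{\omega})$. Hence the running time is $O(\ell n^3 + k \ell n^{\omega})$, proving the first claim. For the second, if $k = o(n^{3 - \omega})$ then $k \ell n^{\omega} = o(\ell n^{3 - \omega} \cdot n^{\omega}) = o(\ell n^3)$, so the bound collapses to $O(\ell n^3)$.

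There is no conceptual obstacle here — the argument is bookkeeping — but two spots warrant care. First, \emph{MaxRoot} must be charged against the degree $\ell - 1$ of the interpolant rather than the nominal degree $k$ of $p|_T$, which is the only place where a careless count would give an extra $n k^3$ term; keeping the real-rootedness hypothesis of \Cref{lem:newton} honest (via the low-rank/interpolation remark) is part of this. Second, the suppression of the $k n \ell^2$ term is exactly where the elementary inequalities $k \le n$, $\ell \le k + 1$, and $\omega \ge 2$ are spent, and these should be stated explicitly so the final simplification is airtight.
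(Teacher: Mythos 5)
Your argument follows the same line-by-line accounting as the paper's proof — initialization charges $O(\ell n^3)$ for $\ell$ diagonalizations, and each of the $k$ rounds charges $O(\ell n^\omega)$ for the \emph{Conditionals} and \emph{UpdateDiagonalization} calls — and reaches the same bound. The one genuine difference is that the paper simply declares the conditionals and diagonalization updates to be ``the main contributions'' without accounting for the $n$ calls per round to \emph{Interpolate} and \emph{MaxRoot}; you track the resulting $O(kn\ell^2)$ term explicitly and then absorb it via $\ell \le k+1 \le n+1$ and $\omega \ge 2$, so $\ell = O(n^{\omega-1})$ and hence $kn\ell^2 = O(k\ell n^\omega)$. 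That extra step is correct and makes the bookkeeping airtight where the paper's proof leaves a small gap to the reader; your care in charging \emph{MaxRoot} against the interpolant degree rather than the nominal degree $k$, and in invoking the real-rootedness of $p|_T(A_1 t - A_0)$ before applying the Newton-iteration lemma, are both appropriate and consistent with the surrounding material.
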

\begin{proof}
    In the initialization phase of the algorithm, we need to diagonalize $\ell$ matrices, which takes a total of $\ell n^3$ time.
    In each subsequent round of the algorithm, of which there are $k$, the main contributions to the runtime complexity of the algorithm are in computing the conditionals of the $\ell$ values of $X^{(i)}$, which takes $O(\ell n^{\omega})$ time, and updating the diagonalizations of all $\ell$ of the $X^{(i)}$, which takes $O(\ell n^{\omega})$ time.

    Therefore, the total runtime complexity of this algorithm is $O(\ell n^3 + k \ell n^{\omega})$ operations.
\end{proof}
By noting that $\ell = 2$ for sparse regression, we obtain the following faster runtime:
\begin{cor}
    The total runtime of \Cref{alg:faster} for sparse regression is $O(n^3 + kn^{\omega})$ operations. If $k = o(n^{3-\omega})$, then this is $O(n^3)$ time.
\end{cor}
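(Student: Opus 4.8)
The plan is to obtain this as an immediate specialization of the preceding theorem, whose runtime bound $O(\ell n^3 + k\ell n^{\omega})$ for \Cref{alg:faster} is expressed in terms of the number $\ell$ of evaluation nodes $t_1,\dots,t_\ell$. So the only content of the corollary is the claim that $\ell=2$ already suffices for the sparse regression QCQP of \Cref{eq:sparse_reg}; granting this, the bound becomes $O(2n^3 + 2kn^{\omega}) = O(n^3 + kn^{\omega})$, and if $k = o(n^{3-\omega})$ then $kn^{\omega} = o(n^3)$ so the first term dominates and the bound collapses to $O(n^3)$.

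To see that $\ell = 2$ works, recall that the nodes $t_i$ enter \Cref{alg:faster} only through the interpolation step: in each round, for the current partial support $T$ and each candidate index $j$, the algorithm reconstructs (up to an overall $t$-independent constant, which does not affect roots) the univariate polynomial $g_{T+j}(t) = p|_{T+j}(A_1 t - A_0)$ from its values at $t_1,\dots,t_\ell$ and then reads off $\eta_{p|_{T+j}}$ via \emph{MaxRoot}. For sparse regression $A_1 = A^{\intercal}A$ and $A_0 = A^{\intercal}bb^{\intercal}A = vv^{\intercal}$ with $v = A^{\intercal}b$, so for every $S \supseteq T+j$ with $|S| = k$ the matrix $(A_1 t - A_0)|_S = t\,(A_1|_S) - v_S v_S^{\intercal}$ is a rank-one perturbation of $t\,(A_1|_S)$, and the matrix determinant lemma gives
\[
    \det\!\big(t\,A_1|_S - v_S v_S^{\intercal}\big) = t^{k}\det(A_1|_S) - t^{k-1}\, v_S^{\intercal}\,\adj(A_1|_S)\,v_S ,
\]
a polynomial in $t$ whose only possibly nonzero coefficients have degree $k$ and $k-1$. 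Summing over the $S$ occurring in $p|_{T+j}$ yields $g_{T+j}(t) = \alpha t^{k} - \beta t^{k-1} = t^{k-1}(\alpha t - \beta)$ with $\alpha = p|_{T+j}(A_1)$, so two evaluations at distinct nonzero nodes determine $\alpha$ and $\beta$, after which $\eta_{p|_{T+j}} = \beta/\alpha$ whenever $\alpha \neq 0$ — this is exactly the closed form of \Cref{thm:sparse_reg_closed_form} applied to the conditional polynomial $p|_{T+j}$. Hence $\ell = 2$ nodes suffice in every round, and substituting $\ell = 2$ into the preceding theorem also recovers \Cref{thm:sparse_reg_fast}.

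The one point that genuinely needs checking — and which I would flag as the main obstacle, although it is really just bookkeeping — is that the quantities the algorithm interpolates in the later rounds coincide, up to a $t$-independent factor, with $g_{T+j}(t)$: in round $t$ the algorithm feeds \emph{Interpolate} the products $\mathrm{detT}_i\cdot \mathbf{v}^{(i)}_j$, where $\mathrm{detT}_i$ is the accumulated product of pivots (equal to $\det((A_1t_i - A_0)|_T)$) and $\mathbf{v}^{(i)}_j = p|_j$ of the Schur-complemented matrix $(A_1 t_i - A_0)\setminus T$, and one must invoke \Cref{lem:schurCompCond} to see that this product is a fixed scalar multiple of $p|_{T+j}(A_1 t_i - A_0)$. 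Once that identification is in place, the two-term structure above applies verbatim in each of the $k$ rounds, not just the first, and the runtime count from the preceding theorem goes through with $\ell = 2$.
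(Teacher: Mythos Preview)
Your proposal is correct and follows the same approach as the paper: the corollary is obtained by substituting $\ell = 2$ into the preceding runtime theorem $O(\ell n^3 + k\ell n^{\omega})$, the point being that for sparse regression each $g_{T+j}(t)$ has only two nonzero coefficients (degrees $k$ and $k-1$), so two evaluation nodes suffice for interpolation. The paper states this in one line, referring back to \Cref{thm:sparse_reg_closed_form} for the two-term structure; your write-up supplies more detail (via the matrix determinant lemma and the Schur-complement bookkeeping) but is otherwise the same argument.
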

We also note that for sparse PCA, all of the matrices $X^{(i)} = t_i A_1 + A_0$ differ by multiples of the identity, it is in fact possible to do the intitial diagonalizations in $O(n^3)$ time total.
\begin{cor}
    The total runtime of \Cref{alg:faster} for sparse PCA is $O(n^3 + k^2n^{\omega})$ operations. If $k = o(n^{3-\omega})$, then this is $O(n^3)$ time.
\end{cor}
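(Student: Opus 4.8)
The plan is to re-run the general runtime analysis of \Cref{alg:faster} — which produced the bound $O(\ell n^3 + k\ell n^\omega)$ — while exploiting the special structure of a sparse PCA instance. Here $A_1 = I$ and $A_0 = A$ is symmetric, so the $\ell$ evaluation matrices are $X^{(i)} = t_i I - A$; these pairwise commute and share a common eigenbasis with $A$, since if $A = Q\Lambda Q^\intercal$ is a diagonalization then $X^{(i)} = Q\,(t_i I - \Lambda)\,Q^\intercal$. Consequently, instead of diagonalizing each $X^{(i)}$ separately in the initialization loop — the source of the $\ell n^3 = \Theta(kn^3)$ contribution in the general bound — I would diagonalize $A$ once in $O(n^3)$ floating-point operations and then set $Q^{(i)} = Q$ and $\Lambda^{(i)} = t_i I - \Lambda$ for every $i$, at $O(n)$ extra cost apiece. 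Since $\ell \le k+1 \le n+1$, the initialization phase then costs only $O(n^3 + \ell n) = O(n^3)$, and one checks that every other operation in that loop (forming $X^{(i)}$, initializing $\mathrm{detT}_i$) is genuinely lower order.

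Next I would pin down $\ell$. Unlike the sparse regression case, where the rank-one structure of $A_0$ made $p(tA_1 - A_0)$ affine in $t$ and permitted $\ell = 2$, here $g(t) = p|_T(tI - A)$ is an honest degree-$k$ univariate polynomial — each $k \times k$ principal minor of $tI - A$ contributes degree $k$ — so we must take $\ell = k+1 = \Theta(k)$ interpolation nodes. With this $\ell$, the per-round cost is exactly as in the general analysis: one \emph{Conditionals} and one \emph{UpdateDiagonalization} call per evaluation matrix, i.e. $O(\ell n^\omega)$ by \Cref{thm:fastcomp} and the rank-one diagonalization update, plus the lower-order $O(\ell n^2)$ for the Schur complements and $O(nk^2)$ for the $n$ polynomial interpolations and \emph{MaxRoot} calls (treating the logarithmic factor in \Cref{lem:newton} as constant, as stipulated). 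Summing over the $k$ rounds with $\ell = \Theta(k)$, and using $\omega \ge 2$ together with $k \le n$ so that the term $k^2 n^\omega$ absorbs the resulting $k^2 n^2$ and $nk^3$ contributions, the main loop costs $O(k^2 n^\omega)$. Adding the $O(n^3)$ from initialization gives $O(n^3 + k^2 n^\omega)$, and the last sentence follows by comparing the two summands.

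I expect the only real subtlety to be bookkeeping: confirming that the shared-eigenbasis shortcut is the sole modification needed, and that discarding the per-matrix diagonalizations does not compromise correctness of the subsequent rounds. In particular, the common eigenbasis is destroyed once the Schur-complement updates inside the main loop kick in, but this is harmless — those updates were already charged at $O(\ell n^\omega)$ per round in the general analysis and make no use of the shift structure — so the argument localizes cleanly to the initialization phase.
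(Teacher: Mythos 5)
Your argument is correct and matches the paper's exactly: the paper also observes that the $X^{(i)} = t_i I - A$ differ only by multiples of the identity, so a single $O(n^3)$ diagonalization of $A$ suffices for initialization, while the $k$ rounds with $\ell = \Theta(k)$ interpolation nodes contribute $O(k^2 n^\omega)$. Your additional note that the shared eigenbasis is lost after the Schur-complement updates but that this is harmless is a correct reading of what the paper leaves implicit.
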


We will now go over the correctness of this algorithm and implementation of its subroutines.
\subsection{Correctness of \Cref{alg:faster}}
We will want a lemma to begin.
\begin{lemma}
    Let $p = c_n^k$, and let $T \subseteq [n]$ so that $|T| = t$, then
    \[
        p|_{T+j}(X) = \det(X|_T)c_{n-t}^{k-t}(X \setminus T)
    \]
\end{lemma}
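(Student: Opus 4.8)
The plan is to reduce this lemma directly to Lemma~\ref{lem:schurCompCond}, which already establishes the factorization $p|_T(X) = \det(X|_T)\,p_{-T}(X\setminus T)$ for a general LPM polynomial $p$. So the only thing that remains is to identify $p_{-T}$ when $p = c_n^k$, and to reconcile the notation ``$p|_{T+j}$'' in the statement with ``$p|_T$'' (I expect this is a typo and the intended statement is $p|_T(X) = \det(X|_T)\,c_{n-t}^{\,k-t}(X\setminus T)$; I would state and prove the $p|_T$ version, since the $+j$ makes no sense when the right-hand side does not mention $j$).

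First I would unwind the definition of $p_{-T}$ for $p = c_n^k$. Recall $c_n^k = \sum_{S \in \binom{[n]}{k}} \det(X|_S)$, so all coefficients $a_S$ equal $1$ and $\Delta = \binom{[n]}{k}$. By the definition of $p_{-T}$ given in the excerpt,
\[
    (c_n^k)_{-T}(Y) = \sum_{\substack{S \in \binom{[n]}{k} \\ T \subseteq S}} \det\big(Y|_{S \setminus T}\big).
\]
Now I would set up the bijection $S \mapsto S \setminus T$ between $\{S \in \binom{[n]}{k} : T \subseteq S\}$ and $\binom{[n]\setminus T}{\,k - t\,}$, where $t = |T|$. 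Under this bijection the sum above becomes $\sum_{U \in \binom{[n]\setminus T}{k-t}} \det(Y|_U)$, which is exactly $c_{n-t}^{\,k-t}$ evaluated on the matrix $Y$ whose rows and columns are indexed by $[n]\setminus T$ (here identifying $[n]\setminus T$ with $[n-t]$, as is implicit throughout the paper's use of the subscript-$S$ notation). Hence $(c_n^k)_{-T}(Y) = c_{n-t}^{\,k-t}(Y)$.

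Combining this identification with Lemma~\ref{lem:schurCompCond} applied to $p = c_n^k$ gives
\[
    c_n^k|_T(X) = \det(X|_T)\,(c_n^k)_{-T}(X \setminus T) = \det(X|_T)\,c_{n-t}^{\,k-t}(X \setminus T),
\]
which is the claim. I do not expect any genuine obstacle here — the content is purely bookkeeping — but the one point that deserves care is the index relabeling: $X\setminus T$ is a matrix indexed by $[n]\setminus T$, and $c_{n-t}^{k-t}$ is nominally a polynomial in a matrix indexed by $[n-t]$, so I would either state explicitly that we apply $c_{n-t}^{k-t}$ after relabeling indices, or (cleaner) note that $c_m^j$ of any principal-submatrix-shaped object only depends on the multiset of principal minors and is relabeling-invariant, so the identity is unambiguous. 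I would also remark, in line with the Remark preceding the lemma, that it is harmless to read $X\setminus T$ either as the genuine Schur complement (a matrix on $[n]\setminus T$) or as the rank-$|T|$ update of $X$ with the $T$-rows and columns left in place, since $c_{n-t}^{k-t}$ returns the same value on both.
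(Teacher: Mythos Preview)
Your proposal is correct and matches the paper's approach exactly: the paper's proof simply says ``This follows immediately from \Cref{lem:schurCompCond}, and noting that $p_{-T} = c_{n-t}^{k-t}$,'' which is precisely the reduction you describe (and you are right that $p|_{T+j}$ is a typo for $p|_T$). Your bijection argument and the remarks about index relabeling just make explicit what the paper leaves implicit.
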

\begin{proof}
    This follows immediately from \Cref{lem:schurCompCond}, and noting that $p_{-T} = c_{n-t}^{k-t}$.
\end{proof}
\begin{theorem}
    \Cref{alg:faster} correctly implements \Cref{alg:greedy}.
\end{theorem}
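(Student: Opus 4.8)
The plan is to prove the theorem by induction on the loop index $t = 0,1,\dots,k$, showing that at the top of each iteration the state carried by \Cref{alg:faster} faithfully represents the state of \Cref{alg:greedy}, and in particular that for every candidate index $j$ the quantity $\eta_j$ computed inside the inner loop of \Cref{alg:faster} equals $\eta_{p|_{T+j}}$ for $p = c_n^k$. Granting this, the $\argmax$ steps of the two algorithms select the same $j$ (under a fixed tie-breaking rule), so the sets $T$ evolve identically and the outputs agree.

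First I would state and maintain the loop invariant: after $T$ has been grown to a set with $|T| = t$, for each evaluation point the algorithm holds the $n\times n$ matrix $X^{(i)} = (A_1 t_i - A_0)\setminus T$ (realized as a rank-$t$ update, with rows and columns indexed by $T$ equal to zero), an orthogonal diagonalization $X^{(i)} = Q^{(i)}\Lambda^{(i)}(Q^{(i)})^{\intercal}$ of it, and a scalar $\mathrm{detT}_i = \det\big((A_1 t_i - A_0)|_T\big)$. The base case $t = 0$ is exactly the initialization block. For the inductive step I would invoke three facts. (i) Iterated Schur complements compose: $\big((A_1 t_i - A_0)\setminus T\big)\setminus\{j\} = (A_1 t_i - A_0)\setminus(T+j)$, which is precisely what the call to \emph{SchurComplement}$(X^{(i)},j)$ effects. (ii) By the identities recorded in the proof of \Cref{lem:schurCompCond} (the Schur determinant identity plus commutation of Schur complements with passage to principal submatrices), the pivot $X^{(i)}_{jj} = \big((A_1 t_i - A_0)\setminus T\big)_{jj}$ equals $\det\big((A_1 t_i - A_0)|_{T+j}\big)/\det\big((A_1 t_i - A_0)|_T\big)$; reading the line ``$\mathrm{detT}_i = X^{(i)}_{jj}$'' as the accumulation $\mathrm{detT}_i \gets \mathrm{detT}_i\cdot X^{(i)}_{jj}$, the invariant on $\mathrm{detT}_i$ is preserved. (iii) \emph{SchurComplement}$(X^{(i)},j)$ performs the rank-one update $X^{(i)} - \tfrac{1}{X^{(i)}_{jj}}X^{(i)}_j(X^{(i)}_j)^{\intercal}$, so calling \emph{UpdateDiagonalization} with the vector $X^{(i)}_j$ and weight $-1/X^{(i)}_{jj}$ returns a diagonalization of precisely the new $X^{(i)}$; the appearance of a zero row and column at position $j$ is harmless, since adjoining a zero row and column leaves the characteristic coefficients of any fixed degree unchanged.

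Next I would show that, under this invariant, $\mathrm{detT}_i\cdot v^{(i)}_j = p|_{T+j}(A_1 t_i - A_0)$ for $p = c_n^k$. By \Cref{thm:fastcomp} (applied at the residual degree $k - t$), \emph{Conditionals}$(X^{(i)},\Lambda^{(i)},Q^{(i)})$ returns $v^{(i)}_j = c_n^{k-t}\big|_{\{j\}}(X^{(i)}) = \sum_{j\in S,\,|S| = k-t} \det(X^{(i)}|_S)$; terms with $S\cap T\neq\varnothing$ vanish, so this equals $\sum_{j\in S',\,|S'|=k-t,\,S'\subseteq[n]\setminus T}\det\big(((A_1 t_i - A_0)\setminus T)|_{S'}\big)$. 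Multiplying by $\mathrm{detT}_i = \det((A_1t_i-A_0)|_T)$ and expanding each term exactly as in the proof of \Cref{lem:schurCompCond} yields
\[
    \mathrm{detT}_i\cdot v^{(i)}_j \;=\; \sum_{\substack{R\subseteq[n],\ |R| = k\\ T + j\subseteq R}} \det\big((A_1 t_i - A_0)|_R\big) \;=\; p|_{T+j}(A_1 t_i - A_0).
\]
Since $t\mapsto p|_{T+j}(A_1 t - A_0)$ is a univariate polynomial of degree at most $k$ and the $t_i$ are distinct with $\ell\ge k+1$ (or fewer, when $A_0$ or $A_1$ has low rank, as noted after the algorithm), \emph{Interpolate} recovers this polynomial exactly, and hence the \emph{MaxRoot} call computes $\eta_j = \eta_{p|_{T+j}}$. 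This closes the induction and proves the theorem.

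I expect the genuine work to lie entirely in the loop invariant of the second paragraph: pinning down that the iterated Schur complements compose to $(A_1 t_i - A_0)\setminus(T+j)$, that the running product of pivots equals $\det((A_1 t_i - A_0)|_T)$, and that the zero rows and columns introduced along the way never disturb the degree-$(k-t)$ characteristic coefficients or the diagonalization updates. Everything downstream — the identity $\mathrm{detT}_i\cdot v^{(i)}_j = p|_{T+j}(A_1 t_i - A_0)$ and the passage to maximal roots — is the determinant bookkeeping already carried out for \Cref{lem:schurCompCond} together with the already-established correctness of \emph{Conditionals}, \emph{Interpolate}, and \emph{MaxRoot}.
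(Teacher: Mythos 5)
Your proof is correct and follows essentially the same approach as the paper's: an induction maintaining the loop invariant that $X^{(i)} = (A_1 t_i - A_0)\setminus T$ with $\mathrm{detT}_i = \det((A_1 t_i - A_0)|_T)$, so that $\mathrm{detT}_i\cdot v^{(i)}_j = p|_{T+j}(A_1 t_i - A_0)$, followed by interpolation and root extraction. If anything, your write-up is a bit more careful than the paper's — you state the pivot identity $X^{(i)}_{jj} = \det((A_1 t_i - A_0)|_{T+j})/\det((A_1 t_i - A_0)|_T)$ explicitly, keep the $\det(\cdot|_T)$ versus $\det(\cdot\setminus T)$ bookkeeping straight, and correctly read the pseudocode line for $\mathrm{detT}_i$ as an accumulation.
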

\begin{proof}
    To show that this correctly implements \Cref{alg:greedy}, we just need to show that the $\eta_j$ computed in each round of this algorithm is in fact $\eta_{p|_{T+j}}$.

    Clearly, because $\eta_j$ is the largest root of the interpolated polynomial given by
    \[
        g(t) = \text{Interpolate}(\text{detT}_1\cdot v^{(1)}_j,\dots, \text{detT}_{\ell}\cdot v^{(\ell)}_j),
    \]
    It suffices to show that 
    \[
        \text{detT}_i\cdot v^{(i)}_j = p|_{T+j}(t_iA_1 - A_0).
    \]

    We will show the following facts by induction on the round number:
    \begin{itemize}
        \item $X^{(i)} = (t_iA_1 - A_0)\setminus T$.
        \item $\text{detT}_i = \det((t_iA_1 - A_0)|_T)$.
        \item $\text{detT}_i\cdot v^{(i)}_j = p|_{T+j}(t_iA_1 - A_0)$.
    \end{itemize}
    The fact that this is the case in round 0 follows from the fact that $X^{(i)} = t_iA_1 - A_0$, $\text{detT}_i = 1$ and the definition of the Conditional method.

    Next, we see that at the end of each round, we take $T = T+j$, and $X^{(i)}$ becomes
    \[
        X^{(i)} \setminus \{j\}.
    \]
    By induction, we have that this is
    \[
        X^{(i)} \setminus \{j\} = ((t_iA_1 - A_0)\setminus T) \setminus \{j\} =  ((t_iA_1 - A_0)\setminus (T+j)),
    \]
    where this last identity is sometimes called the `sequential identity', as in \cite{zhang2006schur}, which states that repeated Schur complements compose.

    We also have that $\text{detT}_i$ is updated to be
    \begin{align*} 
    \text{detT}_i \cdot X^{(i)}_{jj}&=\det((t_iA_1 - A_0)\setminus T)((t_iA_1 - A_0)\setminus T)_{jj}\\
            &=\det((t_iA_1 - A_0)\setminus T)\det((t_iA_1 - A_0)|_{T + j}\setminus T)\\
            &=\det((t_iA_1 - A_0)\setminus (T+j)).
    \end{align*}
    Here, we have used the fact that determinants commute with the operation of taking a submatrix and the Schur cmplement determinant identity.

    Finally, we note that by definition,
    \[
        v^{(i)}_j = c^{k-t}_{n-t}|_{j}(X^{(i)}) = X_{jj} c^{k-t-1}_{n-t-1}(X^{(i)} \setminus \{j\}) =  X_{jj} c^{k-t-1}_{n-t-1}(X^{(i)} \setminus \{j\})
    \]
    We then apply our inductive hypothesis for $X^{(i)}$ and the compositionality of Schur complements to see that
    \begin{align*} 
    \text{detT}_i v^{(i)}_j&=\text{detT}_i X_{jj} c^{k-t-1}_{n-t-1}((t_iA_1 - A_0)\setminus (T+j))\\
                           &=\det((t_iA_1 - A_0)|_{(T+j)})p|_{T+j}(t_iA_1 - A_0)\\
                            &=p|_{T+j}(t_iA_1 - A_0),
    \end{align*}
    as desired.
\end{proof}

\subsection{Subroutines for \Cref{alg:faster}}
We want to show \Cref{thm:fastcomp}.
We will start by noting the following fact, which is the basis of our rank 1 update formula.
\begin{lemma}
    \label{lem:rank_one}
    Let $p = c^k_n$, then for any symmetric matrix $X$, with diagonalization $Q\Lambda Q^{\intercal}$
    \[
        p|_{i}(X) = X_{ii} c^{k}_n(\Lambda) - X_i^{\intercal}Q\nabla c^{k-1}_n(\Lambda)Q^{\intercal}X_i
    \]
    where $X_i$ denotes the $i^{th}$ column of $X$.
    Here, if $f : \Sym \rightarrow \R$, then we think of $\nabla f$ as being a matrix $G$ where $G_{ij} = \frac{\partial}{\partial X_{ij}}f(X)$.
\end{lemma}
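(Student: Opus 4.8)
The plan is to reduce the claim to a single rank-one update of $X$, exploit the fact that the characteristic coefficient $c^{k-1}_n$ is affine along rank-one lines to convert that update into a gradient pairing, and then invoke equivariance of the gradient of a spectral function to express everything in terms of the eigendata $Q,\Lambda$. The three ingredients are thus: (i) the Schur-complement identity of \Cref{lem:schurCompCond} together with the preceding lemma $p|_T(X)=\det(X|_T)\,c^{k-t}_{n-t}(X\setminus T)$ for $|T|=t$; (ii) the matrix determinant lemma, equivalently Jacobi's formula $\nabla\det = \adj$, for the gradient of a determinant; and (iii) the fact that $c^{k-1}_n$ and its gradient are spectral.

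First I would apply \Cref{lem:schurCompCond} with $T=\{i\}$, using that $p_{-\{i\}}=c^{k-1}_{n-1}$, to get, for $X_{ii}\neq 0$,
\[
    p|_i(X) = \det(X|_{\{i\}})\, c^{k-1}_{n-1}(X\setminus\{i\}) = X_{ii}\, c^{k-1}_{n-1}(X\setminus\{i\}),
\]
and then, by the remark that $X-\tfrac{1}{X_{ii}}X_iX_i^{\intercal}$ and its $i$th principal submatrix deletion have equal characteristic coefficients of each degree, rewrite this as $p|_i(X)=X_{ii}\,c^{k-1}_n(M_i)$ with $M_i := X - \tfrac{1}{X_{ii}}X_iX_i^{\intercal}$. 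Next, for any symmetric $Y$ and any vector $w$, each principal $(k-1)$-minor of $Y+t\,ww^{\intercal}$ is affine in $t$ by the matrix determinant lemma, so $c^{k-1}_n(Y+t\,ww^{\intercal})$ is affine in $t$; differentiating at $t=0$, and summing $\nabla\det = \adj$ over the minors, identifies the linear coefficient as $w^{\intercal}\nabla c^{k-1}_n(Y)\,w$. Applying this with $Y=X$, $w=X_i$, evaluated at $t=-1/X_{ii}$, gives
\[
    c^{k-1}_n(M_i) = c^{k-1}_n(X) - \tfrac{1}{X_{ii}}\,X_i^{\intercal}\,\nabla c^{k-1}_n(X)\,X_i,
\]
so that $p|_i(X) = X_{ii}\,c^{k-1}_n(X) - X_i^{\intercal}\,\nabla c^{k-1}_n(X)\,X_i$. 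The right-hand side is a polynomial in the entries of $X$ (the entries of $\nabla c^{k-1}_n$ are polynomials, since $c^{k-1}_n$ is), as is $p|_i$, and the two agree on the dense open set $\{X_{ii}\neq 0\}$, hence everywhere; in particular the apparent division by $X_{ii}$ disappears.

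Finally I would pass to eigendata. Since $c^{k-1}_n(X)$ is the $(k-1)$st elementary symmetric polynomial in the eigenvalues, $c^{k-1}_n(X)=c^{k-1}_n(\Lambda)$; and since $c^{k-1}_n$ is a spectral function, its gradient is orthogonally equivariant, $\nabla c^{k-1}_n(X) = Q\,\nabla c^{k-1}_n(\Lambda)\,Q^{\intercal}$ — equivalently, $\nabla c^{k-1}_n(\Lambda)$ is diagonal with $j$th entry $c^{k-2}_{n-1}(\Lambda\setminus\lambda_j)$ by invariance of $c^{k-1}_n$ under conjugation by sign matrices, and the chain rule supplies the conjugation. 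Substituting these two identities into the formula from the previous paragraph yields the stated identity. The one point requiring genuine care — and the main obstacle — is pinning down the convention for $\nabla$ on the space of symmetric matrices so that the directional derivative in the direction $ww^{\intercal}$ equals $w^{\intercal}\nabla f\,w$ (the factor-of-two ambiguity on off-diagonal entries); once that convention is fixed, every remaining step is a routine application of standard determinant identities.
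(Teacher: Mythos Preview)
Your argument is correct and follows essentially the same route as the paper's: reduce $p|_i(X)$ to $X_{ii}\,c^{k-1}_n\bigl(X-\tfrac{1}{X_{ii}}X_iX_i^{\intercal}\bigr)$, use that $c^{k-1}_n$ is affine along rank-one lines so the first-order expansion is exact, and read off the gradient term in the eigenbasis. The only organizational difference is that the paper conjugates by $Q$ first and then linearizes at $\Lambda$, whereas you linearize at $X$ and then invoke equivariance $\nabla c^{k-1}_n(X)=Q\,\nabla c^{k-1}_n(\Lambda)\,Q^{\intercal}$; your version has the modest advantage of explicitly handling the $X_{ii}=0$ case by polynomial continuation and of flagging the gradient convention, both of which the paper leaves implicit. (Note also that both derivations produce $X_{ii}\,c^{k-1}_n(\Lambda)$ rather than the $c^{k}_n(\Lambda)$ printed in the statement, so the displayed formula contains a typo that your proof silently corrects.)
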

\begin{proof}
    Let $X = Q\Lambda Q^{\intercal}$, where $Q$ is orthogonal and $\Lambda$ is diagonal, then by basis invariance,
    \[
        c^{k-1}_n(X - \frac{1}{X_{ii}}X_iX_i^{\intercal})= 
        c^{k-1}_n(\Lambda - \frac{1}{X_{ii}}Q^{\intercal}X_iX_i^{\intercal}Q).
    \]
    Note that $Q^{\intercal}X_iX_i^{\intercal}Q$ is rank 1, and in particular, $c^{k-1}_n$ vanishes to order $k-2$ at this point, so we have that the first order Taylor expansion of this polynomial is exact:

    \[
        c^{k-1}_n(\Lambda - \frac{1}{X_{ii}}Q^{\intercal}X_iX_i^{\intercal}Q) = 
        c^{k-1}_n(\Lambda) - \frac{1}{X_{ii}}X_i^{\intercal}Q\nabla c^{k-1}_n(\Lambda)Q^{\intercal}X_i
    \]
    % Probably can skip some steps prior to this?
    In particular 
    \[
        p|_i(X) = X_{ii}c^{k-1}_n(\Lambda) - X_i^{\intercal}Q\nabla c^{k-1}_n(\Lambda)Q^{\intercal}X_i.
    \]
\end{proof}

For convenience of notation, let $e^k_n(x_1 ,\dots, x_n)$ denote the elementary symmetric polynomial
\[
    e^k_n(x_1 ,\dots, x_n) = \sum_{S \subseteq [n] : |S| = k} \prod_{i \in S}x_i,
\]
and for $x \in \R^n$, let $x_{\hat{i}} \in \R^{n-1}$ denote the vector obtained by deleting the $i^{th}$ entry of $x$.
\begin{lemma}
    Let $\Lambda$ be diagonal, and let $\lambda_i = \Lambda_{ii}$. Then $\nabla c^k_n(\Lambda)$ is a diagonal matrix so that 
    \[
        \nabla c^k_n(\Lambda)_{ii} = e^{k-1}_{n-1}(\lambda_{\hat{i}}).
    \]
\end{lemma}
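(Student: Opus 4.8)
The plan is to compute both families of first partials of $c^k_n$ — the diagonal ones $\partial c^k_n/\partial X_{ii}$ and the off-diagonal ones $\partial c^k_n/\partial X_{ij}$ for $i\neq j$ — and evaluate each at $X=\Lambda$. The diagonal entries will recover the elementary symmetric polynomials, and the off-diagonal entries will turn out to vanish at a diagonal matrix, which gives the "diagonal matrix" part of the claim.

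First I would handle the diagonal entries. The proof of \Cref{lem:new_oracle} already records that $\frac{\partial}{\partial X_{ii}}\det(X|_S)=\det(X|_{S-i})$ when $i\in S$ and $0$ otherwise. Summing over all $S$ of size $k$,
\[
    \frac{\partial}{\partial X_{ii}}c^k_n(X)=\sum_{\substack{S:\,|S|=k\\ i\in S}}\det(X|_{S-i})=\sum_{\substack{T\subseteq[n]\setminus\{i\}\\ |T|=k-1}}\det(X|_T),
\]
where the last step is the reindexing $T=S\setminus\{i\}$. Evaluating at $X=\Lambda$ and using $\det(\Lambda|_T)=\prod_{j\in T}\lambda_j$ turns the right-hand side into $\sum_{T\subseteq[n]\setminus\{i\},\,|T|=k-1}\prod_{j\in T}\lambda_j=e^{k-1}_{n-1}(\lambda_{\hat{i}})$, which is exactly the claimed diagonal entry.

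Next I would show the off-diagonal entries vanish at $\Lambda$. For $i\neq j$, the variable $X_{ij}$ occurs in $\det(X|_S)$ only when $\{i,j\}\subseteq S$, and there Jacobi's (cofactor) formula expresses $\frac{\partial}{\partial X_{ij}}\det(X|_S)$ as a signed combination of $(|S|-1)\times(|S|-1)$ minors of $X|_S$ obtained by deleting one of the rows $\{i,j\}$ and one of the columns $\{i,j\}$. Evaluating at $X=\Lambda$: deleting row $i$ and column $j$ from the diagonal matrix $\Lambda|_S$ leaves a matrix whose row indexed by $j$ is identically zero, since its surviving entries are $(\Lambda|_S)_{j\ell}=0$ for all columns $\ell\neq j$; symmetrically, deleting row $j$ and column $i$ leaves a matrix with a zero row $i$. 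Hence every such minor vanishes, so $\frac{\partial}{\partial X_{ij}}c^k_n(\Lambda)=0$ for $i\neq j$, and therefore $\nabla c^k_n(\Lambda)$ is diagonal with the diagonal entries computed above, completing the proof.

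There is no real obstacle here; the only mild subtlety is the convention for differentiating with respect to a symmetric matrix (whether $X_{ij}$ and $X_{ji}$ count as one variable or two), but this only rescales the off-diagonal derivatives by an overall factor of $2$ and is immaterial since those derivatives are $0$ at $\Lambda$ either way. As an alternative to the cofactor computation, one could deduce diagonality of $\nabla c^k_n(\Lambda)$ from basis invariance of $c^k_n$ by differentiating $c^k_n(Q\Lambda Q^{\intercal})=c^k_n(\Lambda)$ in $Q$ at $Q=I$ — this forces $\nabla c^k_n(\Lambda)$ to commute with $\Lambda$, hence to be diagonal when $\Lambda$ has distinct eigenvalues and then for all $\Lambda$ by continuity — combined with the identity $c^k_n(\Lambda)=e^k_n(\lambda_1,\dots,\lambda_n)$ and the standard formula $\partial_{\lambda_i}e^k_n=e^{k-1}_{n-1}(\lambda_{\hat{i}})$ for the diagonal values.
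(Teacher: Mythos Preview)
Your proof is correct and follows essentially the same route as the paper: compute $\partial_{X_{ij}}\det(X|_S)$ at a diagonal matrix (zero off the diagonal, $\prod_{\ell\in S\setminus\{i\}}\lambda_\ell$ on the diagonal) and extend to $c^k_n$ by linearity. You supply more detail than the paper does---the explicit reindexing $T=S\setminus\{i\}$ for the diagonal part and the zero-row cofactor argument for the off-diagonal vanishing---but the underlying argument is the same.
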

\begin{proof}
    Let $\lambda \in \R^n$ be such that $\lambda_i = \Lambda_{ii}$ for each $i$. Let $\lambda_{\hat{i}} \in \R^{n-1}$ denote the vector obtained by deleting the $i^{th}$ entry of $\lambda$. 

    It is not hard to see that for $i \neq j$, and diagonal $\Lambda$,
    \[
        \frac{\partial}{\partial X_{ij}}\det(\Lambda) =
        \begin{cases}
            \prod_{k \neq i} \Lambda_{kk} \text{ if } i = j\\
            0 \text{ otherwise}
        \end{cases}.
    \]

    From this, we can tell from the definition of $c_k^n$ and linearity of the derivative that
    \[
        \frac{\partial}{\partial X_{ij}}c_k^n(\Lambda) = \begin{cases}e_{n-1}^{k-1}(\lambda_{\hat{i}}) \text{ if } i = j\\  0 \text{ otherwise }\end{cases}.
    \]
\end{proof}
We currently wish to compute the diagonal entries of $\nabla c^k_n(\Lambda)$ for a given diagonal matrix $\Lambda$.
From the above lemma, this is equivalent to computing $e^{k}_{n-1}(\lambda_{\hat{i}})$ for each $i \in [n]$.
We will give a dynamic programming algorithm for this computation, using the following obvious recurrence relation:
\[
    e_{n}^k(x) = x_ie_{n-1}^{k-1}(x_{\hat{i}}) + e_{n-1}^{k}(x_{\hat{i}}).
\]
For conceptual simplicity, we will first give an abstract lemma that encapsulates general kind of recurrence relation that we are interested in calculating.
\begin{lemma}
    \label{lem:recurrence}
    Let $(f_1)_{i=1}^n$ be a sequence of functions where $f_i : \R^i \rightarrow \R^k$. Suppose that $f_i$ is left invariant under permutations of its variables.
    Suppose further that for any $i \in [n]$, and any $j \in [i]$, there exists a function $T_i$ so that
    \[
        f_{i+1}(x) = T_i(f_i(x_{\hat{j}}), x_{j}),
    \]
    and that $T_i$ can be computed in time $O(k)$.
    Then for any $x \in \R^n$, the vector $f^* = (f_{n-1}(x_{\hat{i}}))_{i=1}^n$ can be computed in $O(kn\log(n))$ time.
\end{lemma}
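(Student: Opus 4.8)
The plan is to prove this by a divide-and-conquer ``leave-one-out'' scheme, the size-dependent analogue of computing all $n$ products of $n-1$ out of $n$ monoid elements. The first step is to exploit permutation invariance: since each $f_i$ is symmetric, it descends to a function of an unordered $i$-element set, so for a set $S$ with $|S|=i$ the value $f_{|S|}(x_S)$ is well defined, and iterating the recurrence shows that for every $j \notin S$ we have $f_{|S|+1}(x_{S\cup\{j\}}) = T_{|S|}\big(f_{|S|}(x_S),\, x_j\big)$, with the result independent of the order in which elements of $S\cup\{j\}$ were incorporated. In particular, ``folding'' a set $U$ of new elements into an already-computed value $f_{|W|}(x_W)$, where $U$ is disjoint from $W$, costs $|U|$ applications of the maps $T_{|W|},T_{|W|+1},\dots,T_{|W|+|U|-1}$, that is $O(k|U|)$ time, and yields $f_{|W\cup U|}(x_{W\cup U})$.

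Next I would define the recursive routine $\mathrm{LOO}(S,W,v)$, which takes disjoint nonempty $S,W\subseteq[n]$ together with the value $v=f_{|W|}(x_W)$ and returns $\big(f_{|W|+|S|-1}(x_{W\cup S\setminus\{i\}})\big)_{i\in S}$. If $|S|=1$, say $S=\{i\}$, it returns $v$ (because $W\cup S\setminus\{i\}=W$), in $O(k)$ time. Otherwise it splits $S=S_1\sqcup S_2$ into blocks of size $\lceil|S|/2\rceil$ and $\lfloor|S|/2\rfloor$; it folds $S_1$ into $v$ to get $v_1=f_{|W|+|S_1|}(x_{W\cup S_1})$ and folds $S_2$ into $v$ to get $v_2=f_{|W|+|S_2|}(x_{W\cup S_2})$, each in $O(k|S|)$ time; then it recursively calls $\mathrm{LOO}(S_1,\,W\cup S_2,\,v_2)$ and $\mathrm{LOO}(S_2,\,W\cup S_1,\,v_1)$ and concatenates the outputs. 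Correctness follows by induction on $|S|$: for $i\in S_1$ the first recursive call returns $f_{|W|+|S_2|+|S_1|-1}\big(x_{(W\cup S_2)\cup S_1\setminus\{i\}}\big)=f_{|W|+|S|-1}(x_{W\cup S\setminus\{i\}})$ using disjointness and the inductive hypothesis, and symmetrically for $i\in S_2$. To obtain the claimed vector I would run a top-level wrapper: split $[n]=S_1\sqcup S_2$ into halves, compute $v_1=f_{|S_1|}(x_{S_1})$ and $v_2=f_{|S_2|}(x_{S_2})$ directly (starting from $f_1$ on a single element of the block and folding in the rest, costing $O(kn)$ and using only indices in $[n]$), and then call $\mathrm{LOO}(S_1,S_2,v_2)$ and $\mathrm{LOO}(S_2,S_1,v_1)$; concatenating gives exactly $\big(f_{n-1}(x_{\hat i})\big)_{i=1}^n$.

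For the running time, writing $t(s)$ for the cost of $\mathrm{LOO}$ on an instance with $|S|=s$, the two folds and the split cost $O(ks)$ and there are two recursive calls on instances of size at most $\lceil s/2\rceil$, so
\[
    t(s)\ \le\ 2\,t\!\left(\lceil s/2\rceil\right)+O(ks),
\]
which resolves to $t(s)=O(ks\log s)$; adding the $O(kn)$ setup in the wrapper gives total time $O(kn\log n)$. Equivalently, at each of the $O(\log n)$ levels of the recursion the sizes of the live sub-instances sum to $O(n)$, so the fold work per level is $O(kn)$. The assumption that every $T_i$, and $f_1$, is evaluable in $O(k)$ time is exactly what makes each fold step cost $O(k)$ per element.

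I expect the only real difficulty to be bookkeeping: pinning down the invariant ``$v=f_{|W|}(x_W)$ for the current disjoint $W$'' so that the recursion composes correctly, and verifying that the fold steps contribute only $O(ks)$ at a node — hence $O(kn)$ per level and $O(kn\log n)$ overall — rather than the $O(kn^2)$ one would get from recomputing each $f_{n-1}(x_{\hat i})$ from scratch. The conceptual point enabling the halving is the reduction, via permutation invariance, of the ordered recurrence to the statement that $f_{i+1}$ on an $(i+1)$-set is obtained from $f_i$ on any of its $i$-subsets by one $O(k)$ step; I would isolate that as the first lemma.
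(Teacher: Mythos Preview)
Your proposal is correct and is essentially the same divide-and-conquer scheme the paper uses: the paper phrases it as building an explicit labelled tree with the ``rainbow path property'' and counting its $O(n\log n)$ vertices, while you present the same computation as the recursive routine $\mathrm{LOO}(S,W,v)$ with the recurrence $t(s)\le 2t(\lceil s/2\rceil)+O(ks)$. Unfolding your recursion yields exactly their tree (their paths $P_L,P_R$ are your two fold steps, and their recursive subtrees $T(L),T(R)$ are your two recursive calls), so the two arguments coincide.
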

\begin{proof}
    We first note that there is an easy to describe $O(kn^2)$ time algorithm for this computation.
    For any $x \in \R^{\ell}$, we can compute $f_{\ell}(x)$ by first computing $f_1(x_1)$, then $f_2(x_1,x_2) = T_1(f_1(x_1),x_2)$, then $f_3(x_1,x_2,x_3) = T_2(f_2(x_1,x_2), x_3)$, and so on, requiring a total of $\ell$ computations.
    So, given $x$, we can compute $f_{n-1}(x_{\hat{i}})$ in $O(n)$ steps for a given $i$, allowing us to compute $f^*$ in $O(kn^2)$ steps.

    This process can be seen to be wasteful; for example, we may recompute $f_i(x_1 ,\dots, x_i)$ multiple times, and we could instead only compute it once.
    It is clear then that dynamic programming techniques can be useful here, as they may allow us to save time recomputing values. 
    Thus, our goal is to minimize the number of distinct times we need to apply the recurrence.

    To simplify notation, we will fix $x$, and for $S = \{S_1 ,\dots, S_i\} \subseteq [n]$, denote by $f(S)$ the value of $f_{|S|}(x_{s_1},x_{s_2} ,\dots, x_{s_i})$. We then note that for any $i \in S$, we can compute $f(S)$, given the value of $f(S - i)$ in $O(k)$ time.

    We can now think of structuring our computation in the form of a tree. 
    Let $T$ be a labelled tree with vertices $V$ and edges $E$, with a distinguished root vertex $v_0$, and a labelling of the vertices, $\tau : V \setminus v_0 \rightarrow S$.
    For vertices $v, w \in V$, the distance from $v$ to $w$ is the number of edges on the unique path from $v$ to $w$ in $T$.
    The height of a vertex $V \in V$ is the distance of $v$ to $v_0$.
    For a given vertex $v$, let $P_v = \{v_0,v_1 ,\dots, v_{h}\}$ be the unique path from $v_0$ to $v_h = v$.
    We will say that $T$ has the rainbow path property if for each $v \in V$, the vertices in $P_v$ have distinct labels.

    We now claim that if we can compute a tree $T$ with the rainbow path property in $O(|V|)$ time, with the additional properties that that $T$ has $n$ leaves with different labels, all of height $n$, then we can compute $\nabla c^k_n(\Lambda)$ in $O(k|V|)$ time.

    To see this, we iterate over the vertices of $T$ in order of their height, using Breadth-First-Search starting from $v_0$. 
    For each vertex $v$, we compute $f(\tau(P_v))$, which we can do in time $O(k)$ because all vertices other than $v$ in $P_v$ have smaller height than $v$.
    Therefore, we can compute $f(\tau(P_v))$ for every $v \in T$ in time $O(k|V|)$ time.

    Because $T$ additionally has the rainbow path property, for any leaf $v$ of height $n$, $\tau(P_v - v)$ contains all elements of $[n]$ other than $\tau(v)$, so we can compute $f([n] - \tau(v))$ in constant time from the values computed in the tree.
    Therefore, after computing $f(\tau(P_v))$ for every $v$ in the tree, we can compute $f^*$ in $O(n)$ time, which gives a total running time of $O(kn\log(n))$ time.

    It remains to construct such a tree with $O(n\log(n))$ vertices in $O(n\log(n))$ time.

    To do this, for every $S \subseteq [n]$, we will define a tree $T(S)$ recursively.
    As a base case, suppose $S$ has 1 element, $s$. We define $T(S)$ to be the tree with $V = \{v_0,v_1\}$, $E = \{\{v_0,v_1\}\}$, and $\tau(v_1) = s$.

    Now, for a general set $S$, we divide $S$ into two disjoint subsets of nearly equal size: $L$ and $R$. That is, $L$ will contain $\ell = \lceil \frac{|S|}{2} \rceil$ elements and $R$ will contain $m = \lfloor \frac{|S|}{2} \rfloor$ elements.
    We then define the path graph $P_L$ on vertex set $\{w_1 ,\dots, w_{\ell}\}$, with edges from $w_i$ to $w_{i+1}$ for each $i$, and whose labelling labels the $w_i$ with distinct elements from $L$ in an arbitrary order.
    Similarly, define a path graph $P_R$ with vertex set $\{u_1 ,\dots, u_m\}$ with analogous edge set and labelling.

    Now, we construct $T(L)$ and $T(R)$ inductively.
    We then replace the root of $T(L)$ with $w_{\ell}$ (the last vertex of $P_L$) and similarly replace the root of $T(R)$ with $u_m$.
    Finally, we introduce a new root vertex $v_0$, and add edges from $v_0$ to $w_1$ and $u_1$.

    Let us see that $T(S)$ has the desired properties: the leaves of $T(S)$ are the union of the leaves of $T(L)$ and $T(R)$, so there are $n$ leaves of $T(S)$ which all have distinct labels by induction.
    We also have that the distance of a leaf of $T(L)$ to the root of $T(L)$ was $|L|$. The unique path from such a leaf of $T(L)$ to the root of $T(S)$ is now $|L| + |R| = |S|$.
    Similarly, the distance of a leaf of $T(R)$ to the root of $T(S)$ is also $|S|$. 

    Finally, we note that $T(S)$ has the rainbow path property: let $v \in T(L)$, so that the unique path from $v$ to the root of $T(S)$ also consists of the vertices on the path from $v$ to the root of $T(L)$ together with the vertices of $P_R$.
    By the rainbow path property of $T(L)$, the vertices on the path from $v$ to the last vertex of $P_R$ are distinctly labelled with elements from $L$ and the vertices of $P_R$ are distinctly labelled with elements from $R$, so clearly, all vertices on the path from $v$ to the root of $T(S)$ are distinctly labelled.
    
    Therefore, $T(S)$ has all of the desired properties

    Finally, note that $|T(S)| \le |T(L)| + |T(R)| + |S|$, and inductively, it can be seen that $|T([n])| \le \frac{n}{2}\log_2(\frac{n}{2}) +  \frac{n}{2}\log_2(\frac{n}{2}) + n \le n\log(n)$. Moreover, $T([n])$ can be constructed with a constant amount of work for each vertex, so our runtime requirement is also satisfied.

    The conclusion follows.
\end{proof}

\begin{lemma}
    Let $\lambda \in \R^n$. Then the vector $v \in \R^n$ where $v_i = e^{k}_{n-1}(\lambda_{\hat{i}})$ can be computed in $O(kn\log(n))$ arithmetic operations.
\end{lemma}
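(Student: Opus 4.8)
The plan is to obtain this as a direct instantiation of \Cref{lem:recurrence}. For each $j \ge 1$ define $f_j : \R^j \to \R^{k+1}$ by
\[
    f_j(x) = \bigl(e^0_j(x),\, e^1_j(x),\, \dots,\, e^k_j(x)\bigr),
\]
with the convention $e^\ell_j(x) = 0$ whenever $\ell > j$. Since $k+1 = O(k)$, these functions take values in a space of the dimension required by the lemma, and each $f_j$ is invariant under permutations of its arguments because elementary symmetric polynomials are symmetric. In particular $f_1(t) = (1, t, 0, \dots, 0)$, which serves as the base case.

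Next I would check the recurrence hypothesis. Using the identity $e^\ell_{j+1}(x) = e^\ell_j(x_{\hat m}) + x_m\, e^{\ell-1}_j(x_{\hat m})$, which holds for any index $m$ with the convention $e^{-1}_j \equiv 0$, one sees that $f_{j+1}(x) = T\bigl(f_j(x_{\hat m}),\, x_m\bigr)$, where $T : \R^{k+1} \times \R \to \R^{k+1}$ is given coordinatewise by $T(a,t)_\ell = a_\ell + t\, a_{\ell-1}$ (reading $a_{-1} = 0$). Computing $T(a,t)$ uses $k$ multiplications and $k$ additions, hence $O(k)$ arithmetic operations, and the same map $T$ works for every $j$, so all hypotheses of \Cref{lem:recurrence} are satisfied.

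Applying \Cref{lem:recurrence} then yields the vector $\bigl(f_{n-1}(\lambda_{\hat i})\bigr)_{i=1}^n$ in $O(kn\log n)$ arithmetic operations, and reading off the last coordinate of each $f_{n-1}(\lambda_{\hat i})$ gives exactly $v_i = e^k_{n-1}(\lambda_{\hat i})$. The degenerate case $n=1$ (where $f_{n-1}$ is undefined) is handled directly, since $v_1 = e^k_0()$ equals $1$ if $k = 0$ and $0$ otherwise.

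I do not anticipate a real obstacle: essentially all the work sits in \Cref{lem:recurrence}, and what remains is just to fit the elementary-symmetric-polynomial recurrence into that abstract template. The only points that require a little care are the index bookkeeping — in particular that $f_j$ must carry all of $e^0, \dots, e^k$, not merely $e^k$, because the recurrence for $e^k$ invokes $e^{k-1}$ — and confirming that the update map $T$ truly costs only $O(k)$ rather than $O(k^2)$ arithmetic operations.
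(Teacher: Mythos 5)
Your proof is correct and follows essentially the same route as the paper: both instantiate \Cref{lem:recurrence} with the tuple of elementary symmetric polynomials up to degree $k$ and the update map $T(a,t)_\ell = a_\ell + t\,a_{\ell-1}$ coming from the standard one-variable recurrence. Your version is slightly more careful in that it explicitly carries $e^0 \equiv 1$ in the vector so the update is well-defined at $\ell=1$, whereas the paper's $f_i = (e^1_i,\dots,e^k_i)$ leaves the $v_0 = 1$ convention implicit.
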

\begin{proof}
    We define
    \[
        f_i(x_1 ,\dots, x_i) = (e_i^1(x_1 ,\dots, x_i) ,\dots, e_i^k(x_1 ,\dots, x_i)),
    \]
    and define $T_i(v, x_{i+1})$ so that
    \[
        T_i(v, x)_j = xv_{j-1} + v_j.
    \]
    Then this pair satisfies hypotheses of \Cref{lem:recurrence}, and so we are done.
\end{proof}

As a note, \cite{baker1996computing} popularized the above recurrence relation for computing elementary symmetric polynomials, and it has been cited many times in the literature on determinantal point process.
While that paper also considers the problem of computing the elementary symmetric polynomials, it does not make use of this improved recurrence scheme to speed up the computation.

We will conclude with a proof of \Cref{thm:fastcomp}.
\begin{proof}[Proof of \Cref{thm:fastcomp}]
    We begin with the conclusion of \Cref{lem:rank_one}, that
    \[
        p|_{i}(X) = X_{ii} c^{k}_n(\Lambda) - X_i^{\intercal}Q\nabla c^{k-1}_n(\Lambda)Q^{\intercal}X_i
    \]

    Momentarily assume that $X$ is invertible, and note that 
    \[
        X^{-1} = Q\Lambda^{-1} Q^{\intercal}, 
    \]
    so 
    \[
        u_i = X^{-1}X_i = Q\Lambda^{-1} Q^{\intercal} X_i, 
    \]
    where $u_i$ is the $i^{th}$ standard basis vector.

    Therefore,
    \[
        Q^{\intercal} X_i = \Lambda Q^{\intercal} u_i
    \]

    So we can further simplify
    \[
        p|_i(X) = X_{ii}c^{k-1}_n(\Lambda) - u_i^{\intercal}Q\Lambda \nabla c^{k-1}_n(\Lambda)\Lambda Q^{\intercal} u_i.
    \]
    Notice that $u_i^{\intercal}Mu_i = M_{ii}$ for any matrix $M$, so we can vectorize this equation to say that 
    \[
        \vec{v} = c^{k-1}_n(\Lambda)\diag(X) - \diag(Q\Lambda \nabla c^{k-1}_n(\Lambda)\Lambda Q^{\intercal}),
    \]
    where $\diag(M)$ denotes the vector of diagonal entries of the matrix $M$.

    Now, note that 
    \[
        D = \Lambda \nabla c^{k-1}_n(\Lambda)\Lambda 
    \]
    is a diagonal matrix which we have seen we can compute in $O(kn\log(n))$ time.

    It is then not hard to see that for any diagonal matrix $D$, and any matrix $Q$,
    \[
        (QDQ^{\intercal})_{ii} = \sum_{j = 1}^n Q_{ij}^2 D_{jj} = (Q^{*2} \diag(D))_{ii},
    \]
    where $Q^{*2}$ is the entry-wise square of $Q$.
    Given $D$ and $Q$, this can clearly be computed in $O(n^{\omega})$ time.

    Therefore, the whole vector 
    \[
        \vec{v} = c^{k-1}_n(\Lambda)\diag(X) - Q^{*2} \diag(\Lambda^2  \nabla c^{k-1})
    \]
    can be computed in $O(n^{\omega} + kn\log(n))$ time.
\end{proof}

We will also show the following:
\begin{proof}[Proof of \Cref{lem:newton}]
    We will consider applying the Newton iteration by taking
    \[
        x_{n+1} = x_n - \frac{g(x_n)}{g'(x_n)},
    \]
    starting at $x_1 = t_0$.
    Each step requires $O(k)$ arithmetic operations to compute $g$, and we wish to argue that after $k\log(\frac{t_0-r}{\epsilon})$ steps, we will come with $\epsilon$ of the largest root.

    This method is clearly shift invariant, so for the analysis, we may assume that the largest root of $g$ is at 0. We then have that
    \[
        g(t) = t(t-r_1)(t - r_2)\dots(t-r_{k-1}),
    \]
    where $r_i \le 0$ are the real roots of $g$.

    Therefore, $g$ has nonnegative coefficients, and we see that for $t > 0$, $g(t), g'(t), g''(t) > 0$.
    In particular, $g(t)$ is convex when $t > 0$, and we have that
    \[
        x_n - \frac{g(x_n)}{g'(x_n)} \ge 0.
    \]
    for each $n$. Thus, the $x_n$ form a decreasing sequence of real numbers, which are bounded from below by 0.

    Now, using the product rule of derivatives, it can be seen that
    \[
        \frac{g(t)}{g'(t)} = \left( \frac{1}{t}+\sum_{i = 1}^{k-1} \frac{1}{t-r_i} \right)^{-1} \ge \frac{t}{k}
    \]
    Therefore,
    \[
       x_{n+1} \le \frac{k-1}{k}x_n.
    \]

    Therefore, after $k\log(\frac{t_0-r}{\epsilon})$ iterations, we will obtain that $x_{n} \le \epsilon$, as desired.
\end{proof}
\section{Proofs for \Cref{sec:sparseReg} on Sparse Linear Regression}
Our main result in this section is a proof of \Cref{thm:sparse_reg_closed_form}
\begin{proof}[Proof of \Cref{thm:sparse_reg_closed_form}]
    We consider the univariate polynomial
    \[p(A^{\intercal}A y - A^{\intercal}bb^{\intercal}A) = 0.\]

    Notice that when $y = 0$, we obtain the polynomial $p(A^{\intercal}bb^{\intercal}A)$. Now, notice that $X = A^{\intercal}bb^{\intercal}A$ is rank 1, and therefore, $\det(X|_S)$ vanishes to order $k-1$ at this point.
    Because $p$ is a linear combination of determinants, $p$ must then have a root of multiplicity at least $k-1$ at 0.

    Because $p(A^{\intercal}A) \neq 0$, and $A^{\intercal}A$ is positive semidefinite, we have that any root of this polynomial must be nonnegative, so we have that
    \[
        p(A^{\intercal}A y - A^{\intercal}bb^{\intercal}A) = y^{k-1}(ay - b) = ay^k-by^{k-1}
    \]
    for some $a,b\ge 0$. Hence, the maximal root of $p$ must be $\frac{b}{a}$.

    We can compute $a$ and $b$ explicitly. Notice that
    \[
        \lim_{y \rightarrow \infty} \frac{p(A^{\intercal}A y - A^{\intercal}bb^{\intercal}A)}{y^k} = p(A^{\intercal}A) = a,
    \]
    and that
    \[
        p(-1) = (-1)^kp(A^{\intercal}A y + A^{\intercal}bb^{\intercal}A) = (-1)^k(a+b)
    \]
    From this, we obtain that
    \[
        \frac{b}{a} = \frac{p(A^{\intercal}A y + A^{\intercal}bb^{\intercal}A)}{p(A^{\intercal}A)} - 1,
    \]
    as desired.
\end{proof}

We now show that this closed form solution is equivalent to the probabilistic result in \Cref{thm:probabilistic_eq}.
\begin{proof}[Proof of \Cref{thm:probabilistic_eq}]
    We first recall the so-called matrix determinant lemma, which states that for any invertible $X \in \R^{n\times n }$ and $v \in \R^{n}$
    \[
        \det(X + vv^{\intercal}) = (1+v^{\intercal}X^{-1}v)\det(X).
    \]

    This implies that
    \begin{align*}
        p|_{T}(A^{\intercal}A + A^{\intercal}bb^{\intercal}A) &= \sum_{S \in \Delta, T \subseteq S} a_S\det(A^{\intercal}A|_S + A^{\intercal}bb^{\intercal}A|_S)\\
                                                                   &= \sum_{S \in \Delta, T \subseteq S} a_S\det(A^{\intercal}A|_S)\big(1+b^{\intercal}A_S(A^{\intercal}A|_S)^{-1}A_S^{\intercal}b\big)\\
    \end{align*}

    We also recall the closed form formula for $\ell(A,b)$, given by
    \[
        \ell(A,b) = b^{\intercal}A(A^{\intercal}A)^{-1}A^{\intercal}b.
    \]

    We can thus simplify the above expression and see that

    \begin{align*}
        p|_{T}(A^{\intercal}A + A^{\intercal}bb^{\intercal}A) &= \sum_{S \in \Delta, T \subseteq S} a_S\det(A^{\intercal}A|_S)\big(1+\ell(A_S,b)\big)\\
                                                                   &= \left( \sum_{S \in \Delta, T \subseteq S} a_S\det(A^{\intercal}A) \right) + \sum_{S \in \Delta}\det(A^{\intercal}A|_S)\ell(A|_S,b)\\
                                                                   &= p_{\vec{a}}(A^{\intercal}A) + \sum_{S \in \Delta, T \subseteq S}a_S\det(A^{\intercal}A|_S)\ell(A_S,b)\\
    \end{align*}

    We then obtain the desired result:

    \begin{align*}
        \eta_{p|_T} = \frac{p|_{T}(A^{\intercal}A + A^{\intercal}bb^{\intercal}A)}{p|_{T}(A^{\intercal}A)} - 1 &= \big(\sum_{S \in \Delta, T \subseteq S}\Pr(S)\ell(A_S,b)\big)\\
                                                                                                      &= \EE[\ell(A_S, b) | T \subseteq S]
    \end{align*}
\end{proof}

\section{Proofs for \Cref{sec:sparsePCA} on Sparse PCA}
We show the following lemma:
\begin{lemma}
    Let $c_n^k$ be the characteristic coefficient of $X$ of degree $k$,
    \[
        \lambda^{(k)}(A) \ge \max \{ t : c_n^k(t I - X) = 0\} \ge \lambda_k(A),
    \]
    where $\lambda_k(A)$ is the $k^{th}$ largest eigenvalue of the matrix $A$.
\end{lemma}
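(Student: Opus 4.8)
The plan is to establish the two inequalities separately, exploiting the fact that $c_n^k(tI - X)$ is a univariate polynomial in $t$ whose real roots are easy to locate in terms of the eigenvalues of $X$. Write the eigenvalues of $A$ (or $X$; the statement uses both names for the same matrix) as $\lambda_1 \ge \lambda_2 \ge \dots \ge \lambda_n$. Since $c_n^k$ is basis-invariant, we have $c_n^k(tI - X) = e^k_n(t - \lambda_1, \dots, t - \lambda_n)$, the $k$-th elementary symmetric polynomial evaluated at the shifted eigenvalues. Call this polynomial $g(t)$; it has degree exactly $k$ and, by the results of \cite{blekherman2021linear} cited earlier (or directly, since it is the $k$-th characteristic coefficient of $tI - X$), all of its roots are real, so $\max\{t : g(t) = 0\} = \eta$ is well-defined.

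For the upper bound $\lambda^{(k)}(A) \ge \eta$, I would simply invoke \Cref{thm:root_thm} with the sparse QCQP data $A_1 = I$, $A_0 = A$, $\Delta = \binom{[n]}{k}$, and $p = c_n^k$: the theorem asserts that $\eta_{g}$, with $g(t) = p(A_1 t - A_0) = c_n^k(tI - A)$, is a lower bound for the value of that QCQP, which is exactly $\lambda^{(k)}(A)$. One should note $A_1 = I$ is positive definite, so the hypothesis is satisfied. Alternatively, a self-contained argument: for each $S \in \binom{[n]}{k}$, $\det((tI - A)|_S) = \prod_{i}(t - \mu_i^{(S)})$ where $\mu_i^{(S)}$ are the eigenvalues of $A|_S$, so this factor is positive for $t > \lambda_{\max}(A|_S)$ and hence $g(t) = \sum_S \det((tI-A)|_S) > 0$ once $t > \max_S \lambda_{\max}(A|_S) = \lambda^{(k)}(A)$; since $g$ is real-rooted with positive leading coefficient, its largest root is at most $\lambda^{(k)}(A)$.

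For the lower bound $\eta \ge \lambda_k(A)$, the key point is to evaluate $g$ at $t = \lambda_k$ and show $g(\lambda_k) \le 0$ (or that $\lambda_k$ lies weakly below the largest root). At $t = \lambda_k$ the shifted values $\lambda_k - \lambda_i$ are $\ge 0$ for $i \le k$ and $\le 0$ for $i \ge k$. I would argue that $e^k_n$ of these values is $\le 0$: the term corresponding to $S = \{1, \dots, k-1, j\}$ for any $j > k$ contributes a product with exactly one nonpositive factor $(\lambda_k - \lambda_j) \le 0$ and $k-1$ nonnegative factors, hence is $\le 0$; more carefully one partitions the monomials of $e^k_n$ by how many indices they draw from $\{k+1, \dots, n\}$ and shows the whole sum is $\le 0$, using that any monomial involving at least one ``small'' eigenvalue-gap is $\le 0$ while the unique monomial $\prod_{i=1}^k(\lambda_k - \lambda_i) = 0$ vanishes because the factor $i = k$ is zero. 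Thus $g(\lambda_k) \le 0$; combined with $g(t) \to +\infty$ as $t \to \infty$ and real-rootedness, the largest root $\eta$ satisfies $\eta \ge \lambda_k$. (Equivalently, this is just the Cauchy interlacing remark in the text: $\lambda_{\max}(A|_S) \ge \lambda_k(A)$ for every $S$ of size $k$, so $\det((tI-A)|_S) \le 0$ somewhere at or below $\lambda_k$ is too weak alone, but summing and using that the dominant behavior forces a root $\ge \lambda_k$ works — I'd prefer the direct sign computation above for rigor.)

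The main obstacle is the sign analysis of $e^k_n$ at the shifted eigenvalues in the lower-bound step: one must be careful that cross terms mixing nonnegative and nonpositive gaps do not overwhelm and flip the sign of the sum. The clean way around this is to note $g(t)$ is, up to sign, the degree-$(n-k)$ coefficient of $\det(sI - (tI - A))$ in $s$, i.e. it is $(-1)^{?}$ times a coefficient of $\prod_i (s - t + \lambda_i)$, and then track sign changes of the characteristic coefficients of the interlacing family; but the most transparent route is still to observe that $g$ has the same roots (with multiplicity) as $\prod$ over the distinct ``threshold'' values and simply check that exactly $k-1$ roots lie at or below $\lambda_k$ while one root, the largest, lies at or above $\lambda_k$, which pins $\eta \in [\lambda_k, \lambda_{k-1}]$ (reproving the interlacing-type bound). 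I will present the two bounds in the order: first the \Cref{thm:root_thm} deduction for the upper inequality, then the elementary symmetric sign argument for the lower inequality.
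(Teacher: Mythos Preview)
Your argument for the upper bound $\lambda^{(k)}(A)\ge\eta$ matches the paper's: both simply invoke \Cref{thm:root_thm} with $A_1=I$ and $A_0=A$.

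For the lower bound, however, your direct sign argument has a genuine gap. You claim (after correcting a sign reversal --- with $\lambda_1\ge\cdots\ge\lambda_n$ one has $\lambda_k-\lambda_i\le 0$ for $i<k$, not $\ge 0$) that $g(\lambda_k)=e_n^k(\lambda_k-\lambda_1,\dots,\lambda_k-\lambda_n)\le 0$. This is false in general. Take $n=4$, $k=3$, eigenvalues $(3,2,1,0)$: at $t=\lambda_3=1$ the shifted vector is $(-2,-1,0,1)$, and the only size-$3$ subset avoiding the zero coordinate is $\{1,2,4\}$, giving $e_4^3=(-2)(-1)(1)=2>0$. (The largest root of $g$ here is $(3+\sqrt5)/2\approx 2.618$, so the inequality $\eta\ge\lambda_3=1$ does hold --- just not because $g(\lambda_k)\le0$.) The partition you propose, by how many indices come from $\{k+1,\dots,n\}$, does not force a sign: terms with an \emph{even} number of nonpositive factors are nonnegative and can dominate. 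You flagged exactly this as the main obstacle, and it is a real one; no amount of ``more careful'' bookkeeping will repair it.

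The paper avoids the sign analysis entirely by observing that $c_n^k(tI-X)$ is, up to a positive constant, the $(n-k)$-th derivative of the characteristic polynomial $\det(tI-X)$, and then invoking Rolle's theorem: the roots of successive derivatives of a real-rooted polynomial interlace those of the original, which locates the largest root of $c_n^k(tI-X)$ relative to the $\lambda_i$ directly. That derivative identity is the key idea missing from your approach.
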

\begin{proof}
    The first inequality follows easily from \Cref{thm:root_thm}. The inequality $\max \{ t : c_n^k(t I - X) = 0\} \ge \lambda_k(A)$ follows from root interlacing, which we will explain next.

    Let $\det(tI - X)$ be the characteristic polynomial of $X$, whose $k^{th}$ largest root is $\lambda_{k}(X)$. Then by applying the chain rule, we obtain that
    \[
        c_n^k(t I - X) = \frac{d^{n-k}}{dt^{n-k}}\det(tI - X).
    \]

    As we will discuss in the next section, the polynomial $c_n^k(tI-X)$ has only real zeros.
    Rolle's theorem implies that for a real rooted polynomial $g(t)$ with roots $r_1, \dots, r_d$, $g'(t)$ is real rooted with roots $s_1, \dots, s_{d-1}$ with the property that for $i \in [d-1]$
    \[
        r_i \le s_i \le r_{i+1}.
    \]
    See \cite{kummer2015hyperbolic} for more details.

    By induction, this implies that if the roots of $\frac{d^{n-k}}{dt^{n-k}}g(t)$ are $s_1, \dots, s_{k}$, then for $i \in [k]$,
    \[
        r_i \le s_i \le r_{n-k}.
    \]
    In this case, this implies that the largest root of $c_n^k(t I -X)$ is at least $\lambda_k(A)$.
\end{proof}
\section{Hyperbolic Polynomials}
\label{sec:hyperbolic}
We will review the theory of hyperbolic polynomials here.
An $n$-variate polynomial $p \in \R[x_1 ,\dots, x_n]$ is said to be \emph{hyperbolic} with respect to a vector $v \in \R^n$ if $p(v) > 0$ and for all $x \in \R^n$, all complex roots of the univariate polynomial $p_x(t) = p(x + tv)$ are real.
A basic example of interest for hyperbolic polynomials is the determinant of a symmetric matrix; if we take for $v$ the identity matrix $I \in \Sym$, then the spectral theorem implies that the polynomial $\det(X + tI)$ has real roots for any $X \in \Sym$.
This is equivalent to the determinant polynomial being hyperbolic with respect to the identity matrix.

Associated to any hyperbolic polynomial is its \emph{hyperbolicity cone}
\[
    \Lambda_v(p) = \{x \in \R^n : \forall t < 0, p(x + t v) \neq 0\}.
\]
Surprisingly, this set is convex for any hyperbolic polynomial. The fact that these cones are convex was first shown by G\"arding in \cite{gaarding1951linear} when studying differential equations.
Since then, hyperbolic polynomials have been studied intensely for their connections to computer science and combinatorics \cite{saunderson2019certifying, gurvits2007van, wagner2011multivariate}.

We say that an $n$-variate polynomial is stable if it is hyperbolic with respect to any $v$ in the nonnegative orthant.
We say that a polynomial is \emph{PSD-stable} if it is defined on $\Sym$; it is hyperbolic with respect to $I$, and $\Lambda_I(p)$ contains the PSD cone.

In \cite{blekherman2021linear}, it was shown that an LPM polynomial $p$ is PSD-stable if and only if the $n$-variate polynomial $p(\Diag(x_1 ,\dots, x_n))$ is stable, where $\Diag(x_1 ,\dots, x_n)$ is the diagonal matrix whose diagonal entries are $x_1 ,\dots, x_n$. In particular, we see that if $p(X)$ is PSD stable, then $p|_T(X)$ is PSD stable, since
\[
    p|_T(\Diag(x_1 ,\dots, x_n)) = (\prod_{i\in T}x_i)(\prod_{i\in T} \frac{\partial}{\partial x_i}) p|_T(\Diag(x_1 ,\dots, x_n)),
\]
and stable polynomials are closed under differentiation with respect to coordinate vectors and multiplication \cite{wagner2011multivariate}.

In particular, there exists an LPM polynomial which is PSD-stable and supported on $\Delta$ if and only if $\Delta$ is a \emph{hyperbolic matroid}, which are defined in \cite{choe2004homogeneous}.
It was also shown in \cite{blekherman2021linear} that for any PSD stable LPM polynomial supported on a set $\Delta$, $\Lambda_I(p)$ also contains $\mathcal{P}(\Delta)$.

Therefore, if we have a multivariate optimization problem of the form
\begin{equation}
    \begin{aligned}
        \min\quad & b^{\intercal} y\\
        \st & \sum_{i=1}^k A_i y_i - A_0 \in \P(\Delta),
    \end{aligned}
\end{equation}
then we can find a lower bound on this problem by considering, for any PSD-stable LPM polynomial $p$,
\begin{equation}\label{eq:sparse_lpm_dual}
    \begin{aligned}
        \min\quad & b^{\intercal} y\\
        \st & \sum_{i=1}^k A_i y_i - A_0 \in \Lambda_I(p).
    \end{aligned}
\end{equation}
This problem is tractible in the sense that $p$ serves as a self-concordant barrier function on $\Lambda_I(p)$, allowing for the usage of interior point methods to optimize as long as $p$ can be evaluated efficiently \cite{guler1997hyperbolic}.
However, there are currently no software implementations of hyperbolicity cone programming that are efficient in practice.
The main difficulty of implementing such a program at this level of generality is that $p$ usually has too many coefficients to specify precisely.

If the QCQP in question has only one constraint, then this idea allows us to extend the greedy conditioning procedure to cases when the constraint matrix $A_1$ is not positive definite.
However, even for nonsparse QCQPs with multiple constraints, it is a difficult question to understand how to recover a solution to the original QCQP from the the corresponding SDP relaxation.
It seems like it would be even harder to recover a sparse solution from our hyperbolicity cone relaxation.

For these reasons, for our main results, we only work in the 1 constraint case.

\subsection{Approximation Guarantees}
We will use the ideas contained in this section and in \cite{blekherman2022hyperbolic} to obtain some approximation guarantees for our methods.
These results will only bound the approximatation quality for $\eta_p$ for a single polynomial $p$, and thus, these results likely will not capture the quality of \Cref{alg:greedy}, which considers a large numebr of polynomials.
\begin{theorem}
    Let $\mathcal{Q}$ be a sparse QCQP with $A_1$ being positive definite with $\Delta = \binom{[n]}{k}$. Let $p = c_n^k$. Then the optimal value of $\mathcal{Q}$ is at most
    \[
        C_1\eta_{p} + C_2,
    \]
    where
    \[
        C_1 =\left(1+ \frac{(n-k)\tr(A_1)}{\lambda_{min}(A_1)n(k-1)}\right) \text{, and}
    \]
    \[
        C_2 = \frac{(n-k)\tr(A_0)}{\lambda_{min}(A_1)n(k-1)}.
    \]
\end{theorem}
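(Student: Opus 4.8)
The plan is to relate $\eta_p$ — the maximum root of $g(t) = c_n^k(A_1 t - A_0)$ — to the optimum value $\mathrm{OPT}$ of $\mathcal{Q}$ by exploiting the fact that $c_n^k$ differs from the full determinant $\det = c_n^n$ only through a controlled averaging over $k$-subsets, and then comparing $\eta_p$ to the root bound one would get from the determinant itself. I would first invoke \Cref{thm:continuous_formulation}, or more directly the argument behind \Cref{thm:root_thm}: $\mathrm{OPT}$ equals $\max_{S} \eta_{\det(\,\cdot\,|_S)}$ over $S \in \binom{[n]}{k}$, i.e.\ the largest $t$ such that $(A_1 t - A_0)|_S$ is PSD and singular for some $S$. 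So I need to show that for the \emph{worst} such $S$, this root is at most $C_1 \eta_p + C_2$. Equivalently, writing $M(t) = A_1 t - A_0$, I must show: if $c_n^k(M(t)) > 0$ for all $t$ in some range, then $M(t)|_S \succ 0$ cannot persist too far below $\eta_p$.

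The key technical step is a quantitative comparison between $c_n^k(Y)$ and the minors $\det(Y|_S)$ when $Y$ ranges over matrices of the form $M(t)$ with $t$ slightly below $\eta_p$. Here I expect to use the bound from \cite{blekherman2022hyperbolic} on how much the hyperbolicity cone $\mathcal{S}^{n,k} = \mathcal{P}(\binom{[n]}{k})$ (equivalently $\Lambda_I(c_n^k)$) can differ from the PSD cone: there is a dilation-type estimate of the form that if $Y \in \Lambda_I(c_n^k)$ then a suitably shrunk/shifted version of $Y$ lies in the PSD cone, with the shift governed by $\tr(Y)$ and $n,k$. Concretely, I would look for an inequality saying that $Y - \alpha \tr(Y) I \succeq 0$ whenever $c_n^k(Y) = 0$ and $Y \in \Lambda_I(c_n^k)$, with $\alpha = \frac{n-k}{n(k-1)}$ or similar — this is exactly the shape that produces the coefficients $C_1, C_2$. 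Applying this with $Y = A_1 \eta_p - A_0$ (which sits on the boundary $c_n^k(Y) = 0$), I get $A_1 \eta_p - A_0 \succeq \alpha \tr(A_1 \eta_p - A_0) I$, hence $A_1(\eta_p - \alpha \eta_p \tr(A_1)) - (A_0 - \alpha\tr(A_0) I + \text{lower order}) \succeq 0$ after regrouping; bounding $\lambda_{\min}$ of the relevant blocks and using $\tr$-monotonicity on submatrices then yields that $M(t)|_S$ is PSD whenever $t \ge C_1 \eta_p + C_2$, forcing $\mathrm{OPT} \le C_1 \eta_p + C_2$.

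I would carry this out in the following order: (1) reduce $\mathrm{OPT}$ to a statement about PSD-ness of $M(t)|_S$ via the SDP-duality argument already used for \Cref{thm:root_thm}; (2) state and apply the containment/dilation estimate for $\Lambda_I(c_n^k)$ relative to the PSD cone, tracking the dependence on $n-k$, $k-1$, $n$, and $\tr$; (3) specialize to $Y = A_1 \eta_p - A_0$ and manipulate the resulting matrix inequality, using $\lambda_{\min}(A_1) I \preceq A_1$ to convert the $A_1$-coefficient into a scalar, and $\tr(X|_S) \le \tr(X)$ for PSD $X$ to handle the restriction to a $k$-subset; (4) read off $C_1$ and $C_2$. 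The main obstacle I anticipate is step (2): getting the \emph{sharp} constants $\frac{n-k}{n(k-1)}$ requires the precise form of the hyperbolicity-cone containment result from \cite{blekherman2022hyperbolic} rather than a soft argument, and carefully matching its normalization (trace vs.\ sum of eigenvalues, and whether the bound is stated for $c_n^k$ or its normalization $c_n^k / \binom{n}{k}$) to the quantities appearing in $\mathcal{Q}$. A secondary subtlety is making sure the inhomogeneous term $A_0$ is handled correctly — since $M(t)$ is affine in $t$, the trace term $\tr(A_1 \eta_p - A_0)$ splits into an $\eta_p$-proportional part (feeding $C_1$) and a constant part (feeding $C_2$), and I need the signs to work out so that the bound is genuinely an upper bound on $\mathrm{OPT}$.
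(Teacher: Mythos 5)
Your plan is the paper's plan: identify $A_1\eta_p - A_0$ as a boundary point of $\Lambda_I(c_n^k)$, invoke the containment estimate from \cite{blekherman2022hyperbolic} comparing $\Lambda_I(c_n^k)$ to the PSD cone, then use $\lambda_{\min}(A_1) I \preceq A_1$ to convert the resulting identity-shift into an $A_1$-shift and read off a dual-feasible $y = C_1\eta_p + C_2$. So the route is correct. Two concrete issues, though.

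First, you have the sign of the containment estimate backwards. The result you need says that for $X \in \Lambda_I(c_n^k)$ one has $X + \frac{(n-k)\tr(X)}{n(k-1)} I \succeq 0$ — a positive shift lifts the negative eigenvalues up to zero. You wrote $Y - \alpha\tr(Y)I \succeq 0$, i.e.\ $Y \succeq \alpha\tr(Y)I$, which for a boundary matrix $Y$ with a negative eigenvalue and $\tr(Y) > 0$ is simply false. Taken literally this kills the argument: what you need is to push $A_1\eta_p - A_0$ \emph{up} into the PSD cone (producing a dual feasible point at $y \ge \eta_p$), not to claim it is already essentially PSD. You flagged your uncertainty here, and indeed the sign is the whole ballgame; once fixed, the rest of the algebra becomes $A_1\eta_p - A_0 + \alpha\tr(A_1\eta_p - A_0) I \succeq 0$, then $\alpha\tr(\cdot) I \preceq \frac{\alpha\tr(\cdot)}{\lambda_{\min}(A_1)} A_1$ gives $A_1 y - A_0 \succeq 0$ directly.

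Second, your detour through submatrices — step (1) reducing OPT to PSD-ness of $M(t)|_S$ and using $\tr(X|_S) \le \tr(X)$ in step (3) — is unnecessary. The paper never touches a principal submatrix: it produces a $y$ with $A_1 y - A_0 \succeq 0$ (PSD in the full sense), which is strictly stronger than membership in $\mathcal{P}(\binom{[n]}{k})$, and then cites dual feasibility for \Cref{eq:sparse_sdp_dual} to conclude $y \ge \mathrm{OPT}$. Working submatrix-by-submatrix with a trace monotonicity bound is not wrong, but it adds steps and would not change the constants, so it is just extra exposure to error. Drop it and argue full-matrix PSD-ness as the paper does.

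Your identification of the constant $\alpha = \frac{(n-k)}{n(k-1)}$ and the split of $\tr(A_1\eta_p \pm A_0)$ into a part proportional to $\eta_p$ (feeding $C_1$) and a constant part (feeding $C_2$) is exactly right; with the sign of the containment estimate corrected and the submatrix step deleted, your outline matches the paper's proof.
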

\begin{proof}
    From our previous discussion, we have that
    \[
        A_1 \eta_p - A_0 \in \Lambda_I(p).
    \]
    In \cite{blekherman2022hyperbolic}, it was shown that for any $X \in \Lambda_I(p)$, $X + \frac{(n-k)\tr(X)}{n(k-1)}I \succeq 0$. Therefore,
    \[
        A_1 \eta_p - A_0 + \frac{(n-k)\tr(A_1 \eta_p + A_0)}{n(k-1)}I \succeq 0.
    \]
    Now, we have that $\frac{1}{\lambda_{min}(A_1)} A_1 \succeq I$, so we also know that
    \[
        A_1 \eta_p - A_0 + \frac{(n-k)\tr(A_1 \eta_p + A_0)}{\lambda_{min}(A_1)n(k-1)}A_1 \succeq 0.
    \]
    In particular,
    \begin{align*}
        y &= \eta_p  + \frac{(n-k)\tr(A_1 \eta_p + A_0)}{\lambda_{min}(A_1)n(k-1)}\\
         &=
        \left(1+ \frac{(n-k)\tr(A_1)}{\lambda_{min}(A_1)n(k-1)}\right)\eta_p  + \frac{(n-k)\tr(A_0)}{\lambda_{min}(A_1)n(k-1)}
    \end{align*}
    is a feasible point for \Cref{eq:sparse_sdp_dual}, and we have that $y$ is an upper bound on the optimal value of $\mathcal{Q}$.
\end{proof}
This result is somewhat crude, which is in part due to the fact that it only considers $\eta_p$, and not $\eta_{p|_T}$ for larger sets $T$.
Indeed, it can be seen that just using $\eta_p$ as a lower bound for the value of $\mathcal{Q}$ often producecs very bad results, and making use of $\eta_{p|_T}$ as in \Cref{alg:greedy} is necessary to produce interesting results.
Nevertheless, we make it explicit here as a starting point for more detailed analysis of \Cref{alg:greedy} in the future.

We can also do a crude analysis of when $\eta_p$ recovers the optimum value of \Cref{eq:sparse_qcqp_orig} exactly.
It is not hard to see from \Cref{thm:probabilistic_eq} that for sparse regression, this is only possible if, for every $S$ in the support of $p$, $S$ attains the optimum in \Cref{eq:sparse_qcqp_2}.
In fact, this is more generally true.
\begin{theorem}
    Let $p$ be a LPM polynomial with nonnegative coefficients whose support is $\Delta$.
    Let $\mathcal{Q}$ be a sparse QCQP of the form \Cref{eq:sparse_qcqp} with support $\Delta$, where $A_1$ is positive definite.
    $\eta_p$ equals the optimum value of $\mathcal{Q}$ if and only if for every $S \in \Delta$,
    \begin{equation}
    \begin{aligned}
        \max\quad & x^{\intercal}A_0x\\
        \st & x^{\intercal}A_1x = 1\\
            & x \in \R^S\\
    \end{aligned}
    \end{equation}
    equals the optimum value of $\mathcal{Q}$.
\end{theorem}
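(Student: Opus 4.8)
The plan is to argue directly with the univariate polynomial $g(t) = p(A_1 t - A_0)$, exploiting that for a one-constraint problem the optimum factors through a maximum over supports. Write $p = \sum_{S \in \Delta} a_S \det(X|_S)$; the hypothesis that the support of $p$ is $\Delta$ means $a_S > 0$ for every $S \in \Delta$. Since taking a principal submatrix commutes with linear combinations, $g(t) = \sum_{S \in \Delta} a_S\, g_S(t)$ where $g_S(t) := \det\big((A_1 t - A_0)|_S\big) = \det(A_1|_S\, t - A_0|_S)$. Because $A_1$ is positive definite, so is each $A_1|_S$, hence $g_S(t) = \det(A_1|_S)\,\det(tI - M_S)$ with $M_S := A_1|_S^{-1/2} A_0|_S\, A_1|_S^{-1/2}$ symmetric. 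Thus $g_S$ is, up to the strictly positive factor $\det(A_1|_S)$, the characteristic polynomial of $M_S$: it has degree $|S| = k$, positive leading coefficient, and only real roots, the largest of which is $\lambda_{\max}(M_S)$. The first step is to identify $\lambda_{\max}(M_S)$ with the value $v_S$ of the restricted QCQP $\max\{x^\intercal A_0 x : x^\intercal A_1 x = 1,\ x \in \R^S\}$: after the change of variables $y = A_1|_S^{1/2} x$ (identifying $\R^S$ with $\R^{|S|}$), this becomes $\max\{y^\intercal M_S y : \|y\| = 1\} = \lambda_{\max}(M_S)$. By the combinatorial reformulation in \Cref{eq:sparse_qcqp_2}, the optimal value of $\mathcal{Q}$ is $v^\ast := \max_{S \in \Delta} v_S$.

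The next step is to record two elementary sign facts about each $g_S$. Since $g_S$ has positive leading coefficient and all real roots $\le v_S \le v^\ast$, we have $g_S(t) > 0$ for all $t > v^\ast$, and $g_S(v^\ast) \ge 0$ with equality if and only if $v_S = v^\ast$. Multiplying by $a_S > 0$ and summing over $S \in \Delta$ gives $g(t) > 0$ for all $t > v^\ast$ (so $\eta_p \le v^\ast$, consistent with \Cref{thm:root_thm}) and $g(v^\ast) = \sum_{S \in \Delta} a_S\, g_S(v^\ast) \ge 0$, with $g(v^\ast) = 0$ precisely when $g_S(v^\ast) = 0$ for every $S \in \Delta$, i.e. precisely when $v_S = v^\ast$ for every $S \in \Delta$.

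Both directions of the equivalence then follow immediately. If $v_S = v^\ast$ for all $S \in \Delta$, then $g(v^\ast) = 0$ and $g(t) > 0$ for $t > v^\ast$, so $v^\ast$ is the largest real root of $g$ and $\eta_p = v^\ast$. Conversely, if some $S_0 \in \Delta$ has $v_{S_0} < v^\ast$, then $g_{S_0}(v^\ast) > 0$ while every other summand of $g(v^\ast)$ is $\ge 0$, so $g(v^\ast) \ge a_{S_0} g_{S_0}(v^\ast) > 0$; together with $g(t) > 0$ for $t > v^\ast$ this shows $g$ has no real root $\ge v^\ast$, hence $\eta_p < v^\ast$. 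Contraposing gives the forward implication.

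I do not expect a genuine obstacle here: the proof is just the decomposition $g = \sum_S a_S g_S$ together with the observation that each $g_S$ is a positive multiple of an honest characteristic polynomial. The points to be careful about are (i) that positive definiteness of $A_1$ is exactly what makes each $g_S$ real-rooted with positive top coefficient and largest root equal to $v_S$ — note in particular that we never need real-rootedness of the \emph{sum} $g$ itself — and (ii) that ``support is $\Delta$'' must be read as $\{S : a_S \neq 0\} = \Delta$, since if some $S_0 \in \Delta$ had $a_{S_0} = 0$ and $v_{S_0} < v^\ast$, the forward implication would fail.
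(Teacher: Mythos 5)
Your proof is correct, and it takes a genuinely different route from the paper's. The paper argues through conic duality: it observes that $\eta_p$ equals the optimum iff $A_1\eta_p - A_0 \in \mathcal{P}(\Delta)$, then uses the fact that $p(A_1\eta_p - A_0) = \sum_S a_S\det((A_1\eta_p - A_0)|_S) = 0$ is a sum of nonnegative terms (since each restricted matrix is PSD) to force each determinant to vanish, and finally passes back to the primal restricted QCQPs via strong duality for each $S$. You instead work entirely with the decomposition $g = \sum_S a_S g_S$, factor each $g_S$ as $\det(A_1|_S)\det(tI - M_S)$ with $M_S = A_1|_S^{-1/2}A_0|_S A_1|_S^{-1/2}$, and identify $v_S = \lambda_{\max}(M_S)$ directly by the Rayleigh quotient rather than via SDP duality. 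This is more elementary and self-contained: you re-derive the bound $\eta_p \le v^\ast$ rather than citing \Cref{thm:root_thm}, you never invoke the cones $\mathcal{P}(\Delta)$ or $\M(\Delta)$, and the sign analysis of $g$ at $v^\ast$ and on $(v^\ast,\infty)$ handles both directions of the equivalence in one stroke. Your observation at the end, that ``support is $\Delta$'' must mean $a_S > 0$ for every $S \in \Delta$ (not merely $\{S : a_S \neq 0\} \subseteq \Delta$), is also the correct reading and is tacitly used in the paper's proof at the step where $\det((A_1\eta_p - A_0)|_S) = 0$ is deduced for all $S \in \Delta$; it is good that you made it explicit.
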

\begin{proof}
    It is the case that $\eta_p$ equals the optimum value of $\mathcal{Q}$ if and only if $\eta_p$ equals the optimum value of the dual program given in \Cref{eq:sparse_sdp_dual}.
    It can be seen that this happens if and only if
    \[
        A_1 \eta_p - A_0 \in \mathcal{P}(\Delta).
    \]
    If this occurs then we see that on the one hand, for every $S \in \Delta$,
    \[
        (A_1 \eta_p - A_0)|_S \succeq 0,
    \]
    and on the other,
    \[
        p(A_1 \eta_p - A_0) = \sum_{S \in \Delta} a_S\det((A_1 \eta_p - A_0)|_S) = 0.
    \]
    From this it follows that for all $S \in \Delta$,
    \[
        \det((A_1 \eta_p - A_0)|_S) = 0.
    \]

    Therefore, for all $S \in \Delta$, $(A_1 \eta_p - A_0)|_S$ is singular. Because $A_1$ is positive definite, we have that $\eta_p$ also solves the optimization problem
    \begin{equation}
        \begin{aligned}
            \min\quad & y\\
            \st & A_1|_Sy - A_0|_S \succeq 0.
        \end{aligned}
    \end{equation}

    We then see by strong duality that for every $S \in \Delta$,
    \begin{equation}
    \begin{aligned}
        \max\quad & x^{\intercal}A_0x\\
        \st & x^{\intercal}A_1x = 1\\
            & x \in \R^S\\
    \end{aligned}
    = \eta_p.
    \end{equation}
    This is then equal to the optimum value of $\mathcal{Q}$ by our assumption.
\end{proof}
We note that because \Cref{alg:greedy} considers a sequence of LPM polynomials, which terminates in an LPM with a singleton support, this result does not necessarily have implications for the exactness of \Cref{alg:greedy}.

\section{Conclusions and Future Directions}
Our results show that there is a fruitful connection between the roots of certain structured polynomials and sparse quadratic programming.
We have seen that these methods give an approach for a broad class of algorithmic problems, have connections to algebra, and also work well on real world optimization problems.

There are a number of ways that these results can be extended and improved.

\subsection{Different $\Delta$}
While we have seen that the set $\Delta = \binom{[n]}{k}$ has a very well behaved LPM polynomial supported on it, it is a challenge to find interesting other set families $\Delta$ for which such an LPM polynomial can be found.

Here is an example of a simple $\Delta$ which may appear in practice: say that $S_1,\dots, S_{\ell} \subseteq [n]$ are disjoint sets, and we want to ensure that at least one element from each set is chosen in our final output.
That is,
\[
    \Delta_{S_1, \dots, S_{\ell}} = \{S \subseteq [n] :|S| = k\;\forall i\in [\ell],\;S \cap S_i \neq \varnothing\}
\]
This may be of interest in practice if we have some prior information that each of these sets will contain different useful information, and so, we should take a selection from each of them.
We then define the following sequence of polynomials: we let $p_0 = c^n_k(X)$ and then define
\[
    p_i = p_{i-1}(X) - p_{i-1}(X|_{A_{i}^c}).
\]
It is not hard to see that this sequence of LPM polynomials has the desired property that it is supported exactly on $\Delta$, and that $p_{\ell}$ can be computed in time that is $O(2^{\ell}poly(n))$.
Thus, for a fixed $\ell$, this can be done efficiently, though it is unclear what to do if the number of sets grows with $n$.

In particular, we can consider a partition matroid: let $S_1, \dots, S_k$ be disjoint sets so that $S_1 \cup S_2 \cup \dots \cup S_k = [n]$ then the partition matroid for this set family is
\[
    \Delta_{S_1, \dots, S_k}  = \{S \subseteq [n] : |S \cap A_i| = 1\}.
\]
We want to understand why in the previous example, we are able to compute some associated LPM polynomial efficiently.

We define a notion of rank for a set family: the \emph{multiaffine rank} of a set family is the smallest $\ell$ so that there are diagonal matrices $D_1, \dots, D_{\ell}$ so that the LPM polynomial
\[
    \sum_{i=1}^{\ell} c_n^k(D_i X)
\]
has nonnegative coefficients and is supported on exactly $\Delta$.

It is clear that if a set family has multiaffine rank at most $\ell$, then there is an LPM polynomial supported on that set family which can be computed in $O(\ell k n^{\omega})$ time.
This is therefore an algebraic approach to understanding the complexity of computing such a polynomial.
The original example implies that for $\Delta_{S_1, \dots, S_{\ell}}$, the multiaffine rank is at most $2^{\ell}$.
We might ask if this is exactly the correct number.
\begin{question}
    What is the multiaffine rank of a partition matroid? In particular, is it $O(poly(n,k,\ell))$?
\end{question}

\subsection{Approximation Guarantees}
One major aspect that is missing from our analysis is a clear theoretical understanding of which instances these methods will perform well on.
For instance, in our experiments, we see that our methods do not tend to work well on random instances.
This is not surprising in light of \Cref{sec:probabilistic}: it seems that our methods implicitly impose some prior on which subsets are more likely to perform well on these tasks.
However, our experimental results indicate that these methods should perform at least as well as LASSO in many cases.
Therefore, we may ask
\begin{question}
    What structural properties of the matrices $A_1$ and $A_0$ lead to \Cref{alg:greedy} performing close to optimally relative to the true value of \Cref{eq:sparse_qcqp}?
\end{question}

\bibliographystyle{plain}
\bibliography{biblio.bib}

\begin{thebibliography}{10}

\bibitem{alizadeh1995interior}
Farid Alizadeh.
\newblock Interior point methods in semidefinite programming with applications
  to combinatorial optimization.
\newblock {\em SIAM journal on Optimization}, 5(1):13--51, 1995.

\bibitem{anari2016monte}
Nima Anari, Shayan~Oveis Gharan, and Alireza Rezaei.
\newblock Monte carlo markov chain algorithms for sampling strongly rayleigh
  distributions and determinantal point processes.
\newblock In {\em Conference on Learning Theory}, pages 103--115. PMLR, 2016.

\bibitem{atamturk2019rank}
Alper Atamturk and Andres Gomez.
\newblock Rank-one convexification for sparse regression.
\newblock {\em arXiv preprint arXiv:1901.10334}, 2019.

\bibitem{bach2010convex}
Francis Bach, Selin~Damla Ahipasaoglu, and Alexandre d'Aspremont.
\newblock Convex relaxations for subset selection.
\newblock {\em arXiv preprint arXiv:1006.3601}, 2010.

\bibitem{baer2021faddeev}
Christian Baer.
\newblock The faddeev-leverrier algorithm and the pfaffian.
\newblock {\em Linear Algebra and its Applications}, 630:39--55, 2021.

\bibitem{baker1996computing}
Frank~B Baker and Michael~R Harwell.
\newblock Computing elementary symmetric functions and their derivatives: A
  didactic.
\newblock {\em Applied Psychological Measurement}, 20(2):169--192, 1996.

\bibitem{ben2022new}
Walid Ben-Ameur and Jos{\'e} Neto.
\newblock New bounds for subset selection from conic relaxations.
\newblock {\em European Journal of Operational Research}, 298(2):425--438,
  2022.

\bibitem{ben1988fast}
Michael Ben-Or, Ephraim Feig, Dexter Kozen, and Prasoon Tiwari.
\newblock A fast parallel algorithm for determining all roots of a polynomial
  with real roots.
\newblock {\em SIAM Journal on Computing}, 17(6):1081--1092, 1988.

\bibitem{ben2001lectures}
Aharon Ben-Tal and Arkadi Nemirovski.
\newblock {\em Lectures on modern convex optimization: analysis, algorithms,
  and engineering applications}.
\newblock SIAM, 2001.

\bibitem{berrut2004barycentric}
Jean-Paul Berrut and Lloyd~N Trefethen.
\newblock Barycentric lagrange interpolation.
\newblock {\em SIAM review}, 46(3):501--517, 2004.

\bibitem{bertsimas2022solving}
Dimitris Bertsimas, Ryan Cory-Wright, and Jean Pauphilet.
\newblock Solving large-scale sparse pca to certifiable (near) optimality.
\newblock {\em J. Mach. Learn. Res.}, 23:13--1, 2022.

\bibitem{bertsimas2016best}
Dimitris Bertsimas, Angela King, and Rahul Mazumder.
\newblock Best subset selection via a modern optimization lens.
\newblock {\em The annals of statistics}, 44(2):813--852, 2016.

\bibitem{blekherman2022hyperbolic}
Grigoriy Blekherman, Santanu~S Dey, Kevin Shu, and Shengding Sun.
\newblock Hyperbolic relaxation of k-locally positive semidefinite matrices.
\newblock {\em SIAM Journal on Optimization}, 32(2):470--490, 2022.

\bibitem{blekherman2021linear}
Grigoriy Blekherman, Mario Kummer, Raman Sanyal, Kevin Shu, and Shengding Sun.
\newblock Linear principal minor polynomials: Hyperbolic determinantal
  inequalities and spectral containment.
\newblock {\em arXiv preprint arXiv:2112.13321}, 2021.

\bibitem{boman2005factor}
Erik~G Boman, Doron Chen, Ojas Parekh, and Sivan Toledo.
\newblock On factor width and symmetric h-matrices.
\newblock {\em Linear algebra and its applications}, 405:239--248, 2005.

\bibitem{bunch1978rank}
James~R Bunch, Christopher~P Nielsen, and Danny~C Sorensen.
\newblock Rank-one modification of the symmetric eigenproblem.
\newblock {\em Numerische Mathematik}, 31(1):31--48, 1978.

\bibitem{candes2005decoding}
Emmanuel~J Candes and Terence Tao.
\newblock Decoding by linear programming.
\newblock {\em IEEE transactions on information theory}, 51(12):4203--4215,
  2005.

\bibitem{choe2004homogeneous}
Young-Bin Choe, James~G Oxley, Alan~D Sokal, and David~G Wagner.
\newblock Homogeneous multivariate polynomials with the half-plane property.
\newblock {\em Advances in Applied Mathematics}, 32(1-2):88--187, 2004.

\bibitem{cuppen1980divide}
Jan~JM Cuppen.
\newblock A divide and conquer method for the symmetric tridiagonal
  eigenproblem.
\newblock {\em Numerische Mathematik}, 36(2):177--195, 1980.

\bibitem{d2008optimal}
Alexandre d'Aspremont, Francis Bach, and Laurent El~Ghaoui.
\newblock Optimal solutions for sparse principal component analysis.
\newblock {\em Journal of Machine Learning Research}, 9(7), 2008.

\bibitem{demmel1997applied}
James~W Demmel.
\newblock {\em Applied numerical linear algebra}.
\newblock SIAM, 1997.

\bibitem{deshpande2006matrix}
Amit Deshpande, Luis Rademacher, Santosh~S Vempala, and Grant Wang.
\newblock Matrix approximation and projective clustering via volume sampling.
\newblock {\em Theory of Computing}, 2(1):225--247, 2006.

\bibitem{Dua:2019}
Dheeru Dua and Casey Graff.
\newblock {UCI} machine learning repository, 2017.

\bibitem{efron2004least}
Bradley Efron, Trevor Hastie, Iain Johnstone, and Robert Tibshirani.
\newblock Least angle regression.
\newblock {\em The Annals of statistics}, 32(2):407--499, 2004.

\bibitem{gaarding1951linear}
Lars G{\aa}rding.
\newblock Linear hyperbolic partial differential equations with constant
  coefficients.
\newblock {\em Acta Mathematica}, 85:1--62, 1951.

\bibitem{gouveia2022sums}
Joao Gouveia, Alexander Kova{\v{c}}ec, and Mina Saee.
\newblock On sums of squares of k-nomials.
\newblock {\em Journal of Pure and Applied Algebra}, 226(1):106820, 2022.

\bibitem{eigenweb}
Gael Guennebaud, Benoit Jacob, et~al.
\newblock Eigen v3.
\newblock http://eigen.tuxfamily.org, 2010.

\bibitem{guler1997hyperbolic}
Osman G{\"u}ler.
\newblock Hyperbolic polynomials and interior point methods for convex
  programming.
\newblock {\em Mathematics of Operations Research}, 22(2):350--377, 1997.

\bibitem{gurvits2007van}
Leonid Gurvits.
\newblock Van der waerden/schrijver-valiant like conjectures and stable (aka
  hyperbolic) homogeneous polynomials: one theorem for all.
\newblock {\em arXiv preprint arXiv:0711.3496}, 2007.

\bibitem{hastie2020best}
Trevor Hastie, Robert Tibshirani, and Ryan Tibshirani.
\newblock Best subset, forward stepwise or lasso? analysis and recommendations
  based on extensive comparisons.
\newblock {\em Statistical Science}, 35(4):579--592, 2020.

\bibitem{horn2012matrix}
Roger~A Horn and Charles~R Johnson.
\newblock {\em Matrix analysis}.
\newblock Cambridge university press, 2012.

\bibitem{kojima2016determinantal}
Mutsuki Kojima and Fumiyasu Komaki.
\newblock Determinantal point process priors for bayesian variable selection in
  linear regression.
\newblock {\em Statistica Sinica}, pages 97--117, 2016.

\bibitem{kummer2015hyperbolic}
Mario Kummer, Daniel Plaumann, and Cynthia Vinzant.
\newblock Hyperbolic polynomials, interlacers, and sums of squares.
\newblock {\em Mathematical Programming}, 153(1):223--245, 2015.

\bibitem{magdon2017np}
Malik Magdon-Ismail.
\newblock Np-hardness and inapproximability of sparse pca.
\newblock {\em Information Processing Letters}, 126:35--38, 2017.

\bibitem{pinkert1976exact}
James~R Pinkert.
\newblock An exact method for finding the roots of a complex polynomial.
\newblock {\em ACM Transactions on Mathematical Software (TOMS)},
  2(4):351--363, 1976.

\bibitem{saunderson2019certifying}
James Saunderson.
\newblock Certifying polynomial nonnegativity via hyperbolic optimization.
\newblock {\em SIAM Journal on Applied Algebra and Geometry}, 3(4):661--690,
  2019.

\bibitem{tibshirani1996regression}
Robert Tibshirani.
\newblock Regression shrinkage and selection via the lasso.
\newblock {\em Journal of the Royal Statistical Society: Series B
  (Methodological)}, 58(1):267--288, 1996.

\bibitem{tropp2004greed}
Joel~A Tropp.
\newblock Greed is good: Algorithmic results for sparse approximation.
\newblock {\em IEEE Transactions on Information theory}, 50(10):2231--2242,
  2004.

\bibitem{wagner2011multivariate}
David Wagner.
\newblock Multivariate stable polynomials: theory and applications.
\newblock {\em Bulletin of the American Mathematical Society}, 48(1):53--84,
  2011.

\bibitem{welch1982algorithmic}
William~J Welch.
\newblock Algorithmic complexity: three np-hard problems in computational
  statistics.
\newblock {\em Journal of Statistical Computation and Simulation},
  15(1):17--25, 1982.

\bibitem{zhang2006schur}
Fuzhen Zhang.
\newblock {\em The Schur complement and its applications}, volume~4.
\newblock Springer Science \& Business Media, 2006.

\bibitem{zou2006sparse}
Hui Zou, Trevor Hastie, and Robert Tibshirani.
\newblock Sparse principal component analysis.
\newblock {\em Journal of computational and graphical statistics},
  15(2):265--286, 2006.

\end{thebibliography}
\end{document}